\numberwithin{equation}{section}
\renewcommand{\AA}{\mathcal{A}}
\newcommand{\R}{\mathbb{R}}
\newcommand{\C}{\mathbb{C}}
\newcommand{\Q}{\mathbb{Q}}
\newcommand{\N}{\mathbb{N}}
\renewcommand{\P}{\mathbb{P}}
\newcommand{\E}{\mathbb{E}}
\newcommand{\cL}{\mathcal{L}}
\newcommand{\cF}{\mathcal{F}}
\newcommand{\cW}{\mathcal{W}}
\newcommand{\cA}{\mathcal{A}}
\newcommand{\eps}{\varepsilon}
\newcommand{\one}{\mathbf{1}}
\newcommand{\Z}{\mathbb{Z}}
\renewcommand{\emptyset}{\varnothing}
\renewcommand{\epsilon}{\varepsilon}
\renewcommand{\rho}{\varrho}
\renewcommand{\phi}{\varphi}
\newcommand{\calF}{{\cal F}}
\renewcommand{\mod}{\,\,\mathrm{mod\,}}
\renewcommand{\hat}{\widehat}
\newcommand{\Rea}{\operatorname{Re}}
\renewcommand{\iint}{\int\hspace{-0.1in}\int}
\DeclareMathOperator{\spt}{supp}
\DeclareMathOperator{\supp}{supp}
\DeclareMathOperator{\Lip}{Lip}
\theoremstyle{plain}
\newtheorem{thm}{Theorem}[section]
\newtheorem{theorem}{Theorem}[section]
\newtheorem{lemma}[thm]{Lemma}
\newtheorem{prop}[thm]{Proposition}
\theoremstyle{definition}
\theoremstyle{remark}
\newtheorem{remark}[thm]{Remark}
\subjclass[2010]{42A20 (Primary), 42A38, 37C45, 28A80, 60K05 (Secondary)}
\keywords{Fourier analysis, Trigonometric series, Fourier series, self-similar sets, random walk on groups, renewal theory, metric number theory}
\thanks{TS was partially supported by the Marie Sk{\l}odowska-Curie Individual Fellowship grant $\sharp$655310 and a start-up fund from the School of Mathematics, University of Manchester, UK}
\title{Trigonometric series and self-similar sets}
\author{Jialun Li}
\address{Institut de Math\'ematiques de Bordeaux, Universit\'e de Bordeaux, 351 cours de la Lib\'eration, Talence, France}
\address{Current address: Institut f\"ur Mathematik, Universit\"at Z\"urich, 190 Winterthurerstrasse, Z\"urich, Switzerland}
\email{jialun.li@math.uzh.ch}
\author{Tuomas Sahlsten}
\address{School of Mathematics, Alan Turing Building, University of Manchester, Oxford Road, Manchester, UK}
\email{tuomas.sahlsten@manchester.ac.uk}
\begin{document}

\begin{abstract}Let $F$ be a self-similar set on $\R$ associated to contractions $f_j(x) = r_j x + b_j$, $j \in \cA$, for some finite $\cA$, such that $F$ is not a singleton. We prove that if $\log r_i / \log r_j$ is irrational for some $i \neq j$, then $F$ is a set of multiplicity, that is, trigonometric series are not in general unique in the complement of $F$. No separation conditions are assumed on $F$. We establish our result by showing that every self-similar measure $\mu$ on $F$ is a Rajchman measure: the Fourier transform $\widehat{\mu}(\xi) \to 0$ as $|\xi| \to \infty$. The rate of $\widehat{\mu}(\xi) \to 0$ is also shown to be logarithmic if $\log r_i / \log r_j$ is diophantine for some $i \neq j$. The proof is based on quantitative renewal theorems for stopping times of random walks on $\R$.
\end{abstract}

\maketitle

\section{Introduction and the main result}

The \textit{uniqueness problem} in Fourier analysis that goes back to Riemann \cite{Riemann} and Cantor \cite{Cantor} concerns the following question: suppose we have two converging trigonometric series $\sum a_n e^{2\pi i nx}$ and $\sum b_n e^{2\pi i nx}$ with coefficients $a_n,b_n \in \C$ such that for ``many'' $x \in [0,1]$ they agree:
\begin{align}\label{eq:uniq}\sum_{n \in \Z} a_n e^{2\pi i n x} = \sum_{n \in \Z} b_n e^{2\pi i n x},\end{align}
then are the coefficients $a_n = b_n$ for all $n \in \Z$? For how ``many'' $x \in [0,1]$ do we need to have \eqref{eq:uniq} so that $a_n = b_n$ holds for all $n \in \Z$? If we assume \eqref{eq:uniq} holds \textit{for all} $x \in [0,1]$, then using Toeplitz operators Cantor \cite{Cantor} proved that indeed $a_n = b_n$ for all $n \in \Z$. However, it would be interesting to see how small the set of $x \in [0,1]$ satisfying \eqref{eq:uniq} can be, so that we have $a_n = b_n$ for all $n \in \Z$. Motivated by this one defines that a subset $F \subset [0,1]$ is a \textit{set of uniqueness} if whenever we have coefficients $a_n,b_n \in \C$, $n \in \Z$, such that \eqref{eq:uniq} holds for all $x \in [0,1] \setminus F$, then $a_n = b_n$ for all $n \in \Z$. Here one defines also that if $F$ is not a set of uniqueness, then it is called a \textit{set of multiplicity}. In particular by Cantor's result this shows that the empty set $\emptyset$ is a set of uniqueness and so $[0,1]$ is a set of multiplicity.

Cantor \cite{Cantor} proved that that every closed countable set is a set of uniqueness, and later Young \cite{Young} generalised to every countable set. In the uncountable case, however, even if assuming $F$ is very small, uniqueness of $F$ may fail: Menshov \cite{Menshov} constructed a set $F$ of Lebesgue measure $0$, which is a set of multiplicity, that is, the uniqueness problem fails if we only assume \eqref{eq:uniq} for all $x \in [0,1] \setminus F$. This can be proved using the following criteria, which goes back to Salem \cite{Salem} that if a set $F$ supports a Borel probability measure $\mu$ such that the \textit{Fourier transform}
$$\widehat{\mu}(\xi) := \int e^{-2\pi i \xi x} \, d\mu(x), \quad \xi \in \R,$$
satisfies $\widehat{\mu}(n) \to 0$ as $|n| \to \infty$, $n \in \Z$, then $F$ is a set of multiplicity. Such measures $\mu$ are called \textit{Rajchman measures} in the literature. Hence constructing measures $\mu$ with decaying Fourier coefficients provides a way to check whether $F$ is of multiplicity. It remains an open problem to classify which uncountable sets $F$ are of multiplicity and which $F$ are of uniqueness and much work has been done in many examples of $F$ on trying to establish their uniqueness or multiplicity, see for example the works of Kechris \textit{et al.} \cite{KL92} on connections to descriptive set theory.

In the series of works Salem \cite{Salem} proved that the middle third Cantor set $C_{1/3}$ is a set of uniqueness. More generally, Salem established that if $C_\lambda$ is the middle $\lambda$-Cantor set with $0 < \lambda < 1/2$, that is, interval of length $1-2\lambda$ is removed from the center of $[0,1]$ at every construction stage, then $C_\lambda$ is a set of uniqueness when $\lambda^{-1}$ is a Pisot number. In the opposite case, if $\lambda^{-1}$ is not a Pisot number, by constructing a Rajchman measure on $C_\lambda$, Piatetski-Shapiro \cite{PS}, Salem and Zygmund \cite{SZ} established that $C_\lambda$ is a set of multiplicity. 

The Cantor set $C_\lambda$ is an example of a \textit{self-similar set}. Recall that a subset $F \subset [0,1]$ is self-similar if there exists similitudes $f_j : [0,1] \to [0,1]$, that is, $f_j(x) = r_j x + b_j$, $j \in \cA$, for some finite set $\cA$, translations $b_j \in \R$ and contractions $0 < r_j < 1$ such that
$$F = \bigcup_{j \in \cA} f_j (F).$$
As far as we know nothing is known about the uniqueness or multiplicity of self-similar sets beyond the case of $C_\lambda$ or if adding finitely many more similitudes with the same contraction ratio $\lambda$ to the definition, which was done by Salem \cite{Salem}. For example if we have two different contractions $r_0 = 1/2$ and $r_1 = 1/3$ for the iterated function system, do we expect $F$ to be of multiplicity or of uniqueness? Due to having same contraction ratio $\lambda$ the case $C_\lambda$  has a convolution structure, which is helpful when connecting to the algebraic properties of the number $\lambda$. In the general case, however, we would need to find a way out of this.

It turns out that the algebraic properties of the additive subgroup $\Gamma$ generated by the log-contraction ratios $\{-\log r_j : j \in \cA\}$ in $\R$ is important in the study of the multiplicity of a self-similar set $F$ with contraction ratios $r_j$. In particular if this subgroup $\Gamma$ is dense, which happens when $\log r_j / \log r_\ell$ is irrational for some $j \neq \ell$ (e.g. $r_j = 1/2$ and $r_\ell = 1/3$), we can establish $F$ is a set of multiplicity.

\begin{theorem}\label{thm:multi}
Let $F \subset [0,1]$ be a self-similar set associated to contractions $f_j(x) = r_j x + b_j$, $j \in \cA$, such that $F$ is not a singleton. If $\log r_j / \log r_\ell$ is irrational for some $j \neq \ell$, then $F$ is a set of multiplicity.
\end{theorem}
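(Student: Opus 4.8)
\emph{Sketch of the approach.} The plan is to deduce Theorem~\ref{thm:multi} from the criterion recalled above, so it is enough to exhibit one Borel probability measure $\mu$ supported on $F$ whose Fourier coefficients $\widehat{\mu}(n)$ tend to $0$ as $|n|\to\infty$. I will in fact show that \emph{every} self-similar measure on $F$ is Rajchman: fix a probability vector $(p_j)_{j\in\cA}$ with all $p_j>0$ and let $\mu=\sum_{j\in\cA}p_j\,(f_j)_*\mu$ be the associated self-similar measure; since $F$ is not a singleton, $\mu$ is not a point mass. Writing $e(x)=e^{2\pi i x}$ and, for a word $\mathbf{j}=j_1\cdots j_n\in\cA^n$, $p_{\mathbf{j}}=\prod_k p_{j_k}$, $r_{\mathbf{j}}=\prod_k r_{j_k}$ and $f_{\mathbf{j}}(x)=r_{\mathbf{j}}x+b_{\mathbf{j}}$, the self-similarity relation $\widehat{\mu}(\xi)=\sum_{j}p_j e(-\xi b_j)\widehat{\mu}(r_j\xi)$ iterates to
\begin{equation}\label{eq:iter}
\widehat{\mu}(\xi)=\sum_{\mathbf{j}\in\cA^n}p_{\mathbf{j}}\,e(-\xi b_{\mathbf{j}})\,\widehat{\mu}(r_{\mathbf{j}}\xi),\qquad n\ge1,
\end{equation}
and everything comes down to showing that the right-hand side is $o(1)$ as $|\xi|\to\infty$.

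The first move is to replace the fixed length $n$ in \eqref{eq:iter} by a stopping time, which is where probability and renewal theory enter. Assume $\xi>0$ (the case $\xi<0$ follows from $\widehat{\mu}(-\xi)=\overline{\widehat{\mu}(\xi)}$). Let $(X_k)_{k\ge1}$ be i.i.d.\ with $\P(X_k=j)=p_j$, put $S_n=\sum_{k\le n}(-\log r_{X_k})$, and let $\tau=\tau(\xi)$ be the first $n$ with $S_n\ge\log\xi$, i.e.\ the first time the frequency $r_{X_1\cdots X_n}\xi$ enters the fixed window $(r_{\min},1]$ with $r_{\min}=\min_j r_j$. Summing the branches of \eqref{eq:iter} cut off at $\tau$ gives
\begin{equation}\label{eq:stop}
\widehat{\mu}(\xi)=\E\!\left[e(-\xi b_{X_1\cdots X_\tau})\,\widehat{\mu}\big(r_{X_1\cdots X_\tau}\,\xi\big)\right].
\end{equation}
Now the hypothesis is used: $\log r_j/\log r_\ell\notin\Q$ for some $j\ne\ell$ says exactly that the additive subgroup of $\R$ generated by $\{-\log r_j:j\in\cA\}$ is dense, equivalently that the step law of $(S_n)$ is non-arithmetic. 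By the renewal theorem in the non-arithmetic case, the law of the overshoot $S_\tau-\log\xi$, which lives in $[0,-\log r_{\min})$ and records the position of $r_{X_1\cdots X_\tau}\xi$ in the window, converges as $\xi\to\infty$ to the stationary excess law; the quantitative renewal theorems for stopping times mentioned in the abstract make this effective, with a polynomial rate when $\log r_j/\log r_\ell$ is moreover diophantine.

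The heart of the matter is to squeeze genuine cancellation out of \eqref{eq:stop}. Since $|\widehat{\mu}|\le1$ everywhere and the argument $r_{X_1\cdots X_\tau}\xi$ stays in a compact set where $\widehat{\mu}$ need not be small, nothing survives the triangle inequality and all decay must come from equidistribution modulo $1$ of the phases $\xi b_{X_1\cdots X_\tau}$. To see why they equidistribute I would peel off the last letter, $b_{\mathbf{j}'j}=b_{\mathbf{j}'}+r_{\mathbf{j}'}b_j$, so that on a penultimate word $\mathbf{j}'$ (for which $\xi r_{\mathbf{j}'}$ ranges over the bounded window $(1,r_{\min}^{-1}]$, with only the smaller-ratio letters $j$ actually stopping when $\xi r_{\mathbf{j}'}$ is near the top of the window) the phase acquires the term $(\xi r_{\mathbf{j}'})\,b_j$. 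Because $F$ is not a singleton there are $j\ne\ell$ with $f_j$ and $f_\ell$ having distinct fixed points, which keeps the pair of increments $(b_j,b_\ell)$ non-degenerate relative to $(r_j,r_\ell)$ at every scale, so already one step smears the phase unless the scale $\xi r_{\mathbf{j}'}$ is resonantly pinned; the renewal theorem shows that the conditioning variable --- essentially $\xi r_{\mathbf{j}'}$ inside the window --- does not concentrate and asymptotically decouples from the accumulated phase $\xi b_{\mathbf{j}'}$, while density of the log-ratio group rules out the arithmetic resonances. Feeding the resulting non-concentration estimate for the exponential sums over stopped words back into \eqref{eq:stop} yields $\widehat{\mu}(\xi)=o(1)$, hence $\mu$ is Rajchman and the criterion above shows $F$ is a set of multiplicity; carrying the rate through the quantitative renewal theorem under a diophantine assumption produces the logarithmic decay of $\widehat{\mu}$.

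The main obstacle I anticipate is exactly this last step: upgrading the qualitative spreading supplied by the renewal theorem to a quantitative non-concentration bound for the exponential sums $\sum p_{\mathbf{j}}e(-\xi b_{\mathbf{j}})$ over stopped words that holds \emph{uniformly over all large $\xi$}, not merely for Lebesgue-a.e.\ $\xi$, while honestly tracking the dependence between the overshoot, the stopped word type (recall that near the top of the window only special last letters stop), and the phase, and keeping the errors summable through the induction on scales. This is precisely where the quantitative renewal theorems for stopping times of random walks on $\R$ do the decisive work.
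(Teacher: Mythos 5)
Your overall framework (reduce to Salem's criterion, show every self-similar measure is Rajchman, introduce the stopping time $\tau$ at which $S_n=-\log r_{X_1\cdots X_n}$ first exceeds $\log\xi$, and invoke the non-lattice renewal theorem for the overshoot) matches the paper. But the mechanism by which you propose to extract cancellation is not the paper's, and as written it has a genuine gap. You base everything on the identity $\widehat{\mu}(\xi)=\E\bigl[e(-\xi b_{X_1\cdots X_\tau})\,\widehat{\mu}(r_{X_1\cdots X_\tau}\xi)\bigr]$ and then assert that the decay must come from equidistribution mod $1$ of the translation phases $\xi b_{X_1\cdots X_\tau}$, with the renewal theorem supplying a ``decoupling'' of the overshoot from the accumulated phase. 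The renewal theorem for stopping times gives you only the marginal law of the overshoot $S_\tau-\log\xi$; it says nothing about the joint distribution of the overshoot with $\xi b_{X_1\cdots X_\tau}\bmod 1$, and establishing non-concentration of those phases uniformly in $\xi$ is essentially as hard as the original Rajchman problem (it amounts to controlling the relative positions of the construction intervals, which is exactly what overlaps make intractable). You yourself flag this as ``the main obstacle,'' but the quantitative renewal theorems do not do this ``decisive work'' --- they cannot, since they carry no information about the $b_j$'s.

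The paper's key move, which your proposal is missing, is to never confront the translation phases at all. One applies Cauchy--Schwarz to the stopped decomposition $\widehat{\mu}(\xi)=\sum_{w\in\cW_t}p_w\int e^{-2\pi i\xi f_w(x)}\,d\mu(x)$ to get
\begin{equation*}
|\widehat{\mu}(\xi)|^2\le\iint\sum_{w\in\cW_t}p_w\,e^{-2\pi i\xi(f_w(x)-f_w(y))}\,d\mu(x)\,d\mu(y),
\end{equation*}
and since $f_w(x)-f_w(y)=r_w(x-y)$ the translations $b_w$ cancel identically. For fixed $(x,y)$ off the diagonal the inner sum is $\E\bigl(g_{s(x-y)}(S_{n_t}-t)\bigr)$ with $g_s(r)=\exp(-2\pi i s e^{-r})$ and $\xi=se^t$, so the oscillation now lives in the overshoot variable itself; the renewal theorem replaces this by $\frac1\sigma\int g_{s(x-y)}(r)p(r)\,dr$, which tends to $0$ by Riemann--Lebesgue as $|s(x-y)|\to\infty$. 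The near-diagonal strip $|x-y|\le\delta$ is controlled not by your fixed-point non-degeneracy argument but by the H\"older regularity $\mu(B(x,r))\le Cr^\alpha$ (Feng--Lau), which is where the non-singleton hypothesis actually enters. Without the Cauchy--Schwarz/second-moment step your argument cannot be completed along the lines you describe.
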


Notice that by assuming $\log r_j / \log r_\ell$ is irrational we exclude the case of $C_\lambda$ as in that case every ratio of logarithms of the contractions is just $1$. It remains an open problem to study the case when $\log r_j / \log r_\ell \in \Q$ for all $j \neq \ell$. We predict that here typically $F$ should be a set of uniqueness unless all the contraction ratios are equal, like the case $C_\lambda$, and then an algebraic number theoretic condition like $\lambda^{-1}$ being Pisot needs to be imposed.

In order to prove the multiplicity of a self-similar set $F$, it is enough by Salem's criterion \cite{Salem} for multiplicity to find a Rajchman measure supported on $F$. Hence Theorem \ref{thm:multi} follows by establishing that all positive dimensional self-similar measures on $F$ are Rajchman measures. Recall that a probability measure $\mu$ on $\R$ is called \textit{self-similar} if there exists a finite collection $\{f_j : j \in \cA\}$ of similitudes of $\R$ with at least two maps and weights $0 < p_j < 1$, $j \in \cA$, with $\sum_{j \in \cA} p_j = 1$  such that $\mu = \sum_{j \in \cA} p_j f_j \mu$.

\begin{theorem}\label{thm:main} Let $F \subset [0,1]$ be a self-similar set associated to contractions $f_j(x) = r_j x + b_j$, $j \in \cA$, such that $F$ is not a singleton. If $\log r_j / \log r_\ell$ is irrational for some $j \neq \ell$, then the Fourier transform $\widehat{\mu}(\xi) \to 0$ as $|\xi| \to \infty$ for every self-similar measure $\mu$ on $F$.
\end{theorem}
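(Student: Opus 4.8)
The plan is to realise $\mu$ as the distribution of a random geometric series coming from the symbolic coding, to apply the self-similarity relation at a first-passage stopping time of the random walk generated by the log-contraction ratios, and then to close the argument with the renewal theorem together with Wiener's theorem on the Fourier transform of a measure. Write $\mu=\sum_{j\in\cA}p_jf_j\mu$ with $0<p_j<1$, let $(\omega_n)_{n\ge1}$ be i.i.d.\ with $\P(\omega_n=j)=p_j$, and set $R_0=1$, $R_n=r_{\omega_1}\cdots r_{\omega_n}$, $P_n=\sum_{k=1}^n R_{k-1}b_{\omega_k}$ and $X=\sum_{n\ge1}R_{n-1}b_{\omega_n}$. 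The series converges almost surely, and $\mathrm{law}(X)$ is the unique fixed point of $\nu\mapsto\sum_j p_j f_j\nu$, so $\mu=\mathrm{law}(X)$; crucially this uses no separation assumption on $F$. For any stopping time $\tau$ of the filtration $\cF_n=\sigma(\omega_1,\dots,\omega_n)$ one has $X=P_\tau+R_\tau X^{(\tau)}$, where $X^{(\tau)}$ is distributed as $\mu$ and independent of $\cF_\tau$; conditioning on $\cF_\tau$ and using that $R_\tau$ is $\cF_\tau$-measurable gives
\begin{equation}\label{eq:plan-stopped}
\widehat\mu(\xi)=\E\left[e^{-2\pi i\xi P_\tau}\,\widehat\mu(\xi R_\tau)\right].
\end{equation}

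Next set $a_j=-\log r_j>0$, $c=\max_j a_j$ and $S_n=a_{\omega_1}+\cdots+a_{\omega_n}$: a random walk with i.i.d.\ strictly positive steps of finite mean $m=\E[a_{\omega_1}]$. The assumption that $\log r_j/\log r_\ell\notin\Q$ for some $j\ne\ell$ is exactly the statement that $\{a_j\}$ generates a dense subgroup of $\R$, i.e.\ that the step law is non-arithmetic. Fix $L>0$; for $|\xi|$ large let $\tau=\tau(\xi,L)$ be the first passage time of $(S_n)$ above the level $\log|\xi|-L$, so that $\xi R_\tau=\sgn(\xi)\,e^{L-O}$ where $O=S_\tau-(\log|\xi|-L)\in(0,c]$ is the renewal overshoot. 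Since the step law is non-arithmetic, the renewal theorem applies: as $|\xi|\to\infty$ with $L$ fixed, the law of $O$ converges weakly to the distribution $\rho$ on $[0,c]$ with bounded density $s\mapsto m^{-1}\P(a_{\omega_1}>s)$.

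To extract decay, take absolute values in \eqref{eq:plan-stopped} and use $|\widehat\mu|\le1$ together with $|\widehat\mu(-t)|=|\widehat\mu(t)|$: this gives $|\widehat\mu(\xi)|\le\E[g_L(O)]$, where $g_L(s):=|\widehat\mu(e^{L-s})|$ is a fixed bounded continuous function on $[0,c]$ (independent of $\xi$). By the weak convergence $\mathrm{law}(O)\to\rho$ for fixed $L$,
\begin{equation*}
\limsup_{|\xi|\to\infty}|\widehat\mu(\xi)|\le\int_0^c g_L\,d\rho\le\frac1m\int_0^c|\widehat\mu(e^{L-s})|\,ds=\frac1m\int_{e^{L-c}}^{e^{L}}\frac{|\widehat\mu(y)|}{y}\,dy,
\end{equation*}
and this holds for every $L>0$. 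Finally $\mu$ is non-atomic: if it had an atom, the finite nonempty set $A$ of its atoms of maximal mass would satisfy $f_j(A)=A$ for every $j$ (from $\mu=\sum_j p_j f_j\mu$), which forces $A=\{x^*\}$ with $x^*$ the common fixed point of all the $f_j$, whence $F=\{x^*\}$ — contradicting the hypothesis. Hence Wiener's theorem gives $\int_0^R|\widehat\mu(y)|^2\,dy=o(R)$ as $R\to\infty$, so by Cauchy--Schwarz $\int_{e^{L-c}}^{e^L}|\widehat\mu(y)|\,dy=o(e^L)$ and thus the right-hand side above is $O\big(e^{-(L-c)}\cdot o(e^L)\big)$, which tends to $0$ as $L\to\infty$. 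Letting $L\to\infty$ yields $\limsup_{|\xi|\to\infty}|\widehat\mu(\xi)|=0$, i.e.\ $\widehat\mu(\xi)\to0$.

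The main obstacle is the renewal-theoretic input. For Theorem~\ref{thm:main} the classical non-arithmetic renewal theorem for the overshoot suffices, but the companion statement — logarithmic decay of $\widehat\mu$ when some $\log r_j/\log r_\ell$ is diophantine — needs a \emph{quantitative} renewal theorem: convergence $\mathrm{law}(O)\to\rho$ with an explicit polynomial or exponential rate, tested against functions $g_L$ whose Lipschitz constant is allowed to grow with $L$ (here like $e^L$, since $\widehat\mu$ has bounded derivative). Proving this requires controlling the Fourier transform of the renewal measure both near and away from the origin, and it is exactly here that the non-arithmetic, respectively diophantine, nature of $\{-\log r_j\}$ is used to rule out resonances; this quantitative renewal theorem for first-passage times of random walks on $\R$ is the technical heart. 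By contrast the passage from it to Fourier decay, sketched above, is soft and uses no separation property of $F$, which is why none is assumed.
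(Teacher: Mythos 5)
Your argument is correct, and it reaches Theorem \ref{thm:main} by a genuinely different route from the paper. Both proofs share the same engine — the self-similarity relation iterated to the first-passage time of the random walk $S_n=\sum -\log r_{\omega_k}$ past level $\log|\xi|-L$, plus Kesten's renewal theorem for the overshoot, with non-lattice coming from the irrationality of $\log r_j/\log r_\ell$ — but they diverge after that. The paper applies Cauchy--Schwarz to get $|\widehat\mu(\xi)|^2\le\iint\sum_w p_w e^{-2\pi i\xi r_w(x-y)}\,d\mu(x)\,d\mu(y)$, splits the integral at the diagonal strip $|x-y|\le\delta$, controls the near-diagonal part by the H\"older regularity $\mu(B(x,r))\le Cr^\alpha$ (Feng--Lau, valid without separation since $F$ is not a singleton), and controls the off-diagonal part by applying the renewal theorem to the oscillatory test functions $g_s(r)=e^{-2\pi i s e^{-r}}$, whose $C^1$ norm grows like $|s|$, followed by Riemann--Lebesgue for the limiting integral $\int g_{s(x-y)}p\,dr$. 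You instead keep the first moment, discard the phase, and bound $|\widehat\mu(\xi)|\le\E[g_L(O)]$ with the \emph{fixed} bounded continuous function $g_L(s)=|\widehat\mu(e^{L-s})|$, so that only the plain weak-convergence form of the overshoot renewal theorem is needed (no uniform $C^1$ error term at this stage); you then close the loop with Wiener's theorem applied to the non-atomic measure $\mu$, the non-atomicity being exactly where the hypothesis that $F$ is not a singleton enters for you, in place of Feng--Lau. This makes your qualitative proof arguably more elementary and self-contained. The trade-off, which you correctly identify, is quantification: the paper's structure (renewal theorem with $C^1$-norm error, explicit $O(1/|s(x-y)|)$ oscillatory decay, and the H\"older exponent $\alpha$) upgrades directly to the logarithmic rate of Theorem \ref{thm:mainquantitative} under the diophantine hypothesis, whereas your route would need a quantitative substitute for Wiener's theorem (which essentially reintroduces the H\"older regularity via $\int_0^R|\widehat\mu|^2=O(R^{1-\alpha})$) together with a renewal theorem effective against test functions whose Lipschitz norm grows like $e^L$.
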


Theorem \ref{thm:main} is closely related to another currently active problem in the community of fractal geometry, where we would like to understand the Fourier transforms of fractal measures, see the book \cite{Mattila} by Mattila for an history and overview. In particular there are various past and recent works on random fractals by Kahane \cite{KahaneImage,KahaneLevel}, Shmerkin and Suomala \cite{ShmerkinSuomala} and other people \cite{FOS, FS}, connections to Diophantine approximation by Kaufman \textit{et al.} \cite{Kaufman1, Kaufman2}, dynamical systems \cite{JordanSahlsten, SahlstenStevens} and additive combinatorics \cite{Bourgain2010,LP09}. Analysing the spectrum of fractal measures has been particularly important in finding normal numbers from the support of fractals \cite{HochmanShmerkinEquidistribution,QetR,DEL} and the study of harmonic analysis defined by fractal measures, see for example applications to the spectrum of convolution operators defined by fractal measures in the work of Sarnak \cite{Sarnak} and later by Sidorov and Solomyak \cite{SidorovSolomyak}, and more recently applications to quantum resonances in quantum chaos by Bourgain and Dyatlov \cite{BourgainDyatlov}.

The study of Fourier transforms of self-similar measures in general goes back to the works of Strichartz \cite{Strichartz1,Strichartz2}, where an average decay of Fourier transform $\widehat{\mu}(\xi)$ of self-similar measures $\mu$ was obtained, where proportions of frequencies $\xi \in \R$ are excluded. More recently a large deviation estimate for these average decays was proved by Tsujii \cite{Tsujii}. However, the methods here cannot be used to obtain a full decay over all $|\xi| \to \infty$. Before Theorem \ref{thm:main} the only cases of self-similar measures $\mu$ where $\widehat{\mu}(\xi) \to 0$ as $|\xi| \to \infty$ was known were \textit{Bernoulli convolutions} $\mu_\beta$, $\beta > 1$, which are the distribution of the random sum $\sum \pm \beta^{-k}$ with i.i.d. chosen signs. The case $\beta > 2$ was studied by Piatetski-Shapiro \cite{PS}, Salem and Zygmund \cite{SZ}, where up to a translation $\mu_\beta$ are natural self-similar measures on middle $\beta^{-1}$ Cantor set. For Bernoulli convolutions $\mu_\beta$ with $1 < \beta < 2$ Fourier transforms play an important role. In particular, proving that $\widehat{\mu}_\beta(\xi)$ has sufficiently fast power decay as $|\xi| \to \infty$ implies that $\mu_\beta$ is absolutely continuous, which is a well-known open problem in the field, see for example Shmerkin \cite{ShmerkinGAFA}. It is known by the results of Erd\"os \cite{Erdos} and Kahane \cite{Kahane} that the set of $1 < \beta < 2$ such that $\mu_\beta$ does not have a power decay has Hausdorff dimension zero. Moreover, if $\beta$ is is not a Pisot number, then Salem \cite{Salem} proved $\widehat{\mu}_\beta(\xi) \to 0$ as $|\xi| \to \infty$, and conversely if $\beta$ is a Pisot number, Erd\"os \cite{Erdos} proved that $\widehat{\mu}_\beta(\xi) \not\to 0$ as $|\xi| \to \infty$. In the non-Pisot case the rate of convergence was later shown to be logarithmic for rational number $\beta$ by Kershner \cite{kershner}, see also Dai \cite{Dai1} and Bufetov and Solomyak \cite{Bufetov}, and some power decay for algebraic numbers $\beta$ has been obtained by Dai, Feng and Wang \cite{Dai2}.

Notice that in Theorem \ref{thm:multi} and Theorem \ref{thm:main} there can be any types of overlaps for the maps $f_j$ and no separation conditions are assumed. Typically in the overlapping case the analysis of self-similar sets and measures can be notoriously difficult to understand, say, their Hausdorff dimension has required some deep connections to additive combinatorics, see for example the recent works of Hochman \cite{Hochman}, Breuillard-Varj\'u \cite{BV} and Varj\'u \cite{V}. The reason overlaps do not cause us any issues is the fact that the main contribution to the Fourier decay comes from controlling the distribution of lengths of the construction intervals, and not their relative positions. Understanding the distribution of the lengths of the construction intervals then can be reduced as a problem of studying the renewal theory for stopping times of random variables $X_1,X_2,\dots$ on $\R$ with distribution $\lambda = \sum_{j \in \cA} p_j \delta_{-\log r_j}$. This strategy to establish Fourier decay is inspired by the case of the stationary measure for Lie group actions by the first author in \cite{Li1}. In the self-similar case case we consider, however, the proof is much more straightforward and we can see the idea governing the Fourier decay more clearly. In our case we will prove a quantitative version of Kesten's renewal theorem for stopping time given in \cite{Kesten}, see Section \ref{sec:renewal}. The irrationality of $\log r_i / \log r_j$ is key to prove the random walk becomes \textit{non-lattice}, that is, not concentrated on an arithmetic progression, which is a key assumption for the renewal theorem for stopping times we employ.

If we want a rate of convergence in Theorem \ref{thm:main} using the strategy we present in this paper, one needs to go into the rate of convergence for the renewal theorems we use. Here it is well-known that the diophantine properties of the random walk become an essential property, in particular, how well $\log r_i / \log r_j$ is approximated by rationals. In Diophantine approximation, it is defined that an irrational real number $a \in \R$ is called \textit{diophantine} if for some $c> 0$ and $l>2$ we have
\begin{align}\label{eq:diophantine}\Big|a - \frac{p}{q}\Big| \geq \frac{c}{q^l}\end{align}
for all $p\in \Z$ and $q\in\N^*$. This happens for example when $a = \log 2 / \log 3$ or in general for $a = \log p / \log q$ with $p,q$ coprime, see Baker \cite{Baker}. Having some diophantine $\log r_i / \log r_j$ in the iterated function system imposes the random walk generated by the contractions to quantitatively avoid lattices and then gives quantitative rates for the renewal theorem. Under this condition, we can improve Theorem \ref{thm:main} in the following way:

\begin{theorem}\label{thm:mainquantitative} Let $F \subset [0,1]$ be a self-similar set associated to contractions $f_j(x) = r_j x + b_j$, $j \in \cA$, such that $F$ is not a singleton. If $\log r_i / \log r_j$ is diophantine for some $i \neq j$, then for every self-similar measure $\mu$ on $F$, there exists $\beta > 0$ such that
$$|\widehat{\mu}(\xi)| = O\Big(\frac{1}{|\log |\xi||^{\beta}}\Big), \quad |\xi| \to \infty.$$
\end{theorem}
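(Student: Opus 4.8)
The plan is to rerun the proof of Theorem~\ref{thm:main} while keeping every estimate quantitative. The diophantine hypothesis enters in exactly one place: it forces the random walk driving the construction of $\mu$ to be \emph{quantitatively} non-lattice, i.e.\ it keeps the characteristic functions of the step distribution and of the auxiliary distributions that occur uniformly away from the unit circle, with an explicit modulus on each frequency range. This is precisely the extra input needed to upgrade Kesten's renewal theorem for stopping times to the quantitative renewal theorem of Section~\ref{sec:renewal}, and that quantitative renewal theorem then propagates through the reduction to yield a rate of decay for $\widehat\mu$.

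Here is the reduction I would carry out. Fix a self-similar measure $\mu=\sum_{j\in\cA}p_jf_j\mu$ and set $\lambda=\sum_{j\in\cA}p_j\delta_{-\log r_j}$, a finitely supported probability measure on $(0,\infty)$ all of whose atoms are present; note $\log r_i/\log r_j$ is a ratio of two of these atoms. Let $(X_n)$ be i.i.d.\ of law $\lambda$, $S_n=X_1+\dots+X_n$, realised on the Bernoulli space $(\cA^\N,\P)$, and let $\pi\colon\cA^\N\to F$ be the coding map, so $\mu=\pi_*\P$. For $t>0$ let $\tau_t=\min\{n:S_n>t\}$. Since $\tau_t$ is a stopping time for the i.i.d.\ sequence, $\sigma^{\tau_t}\omega$ is independent of $(\omega_1,\dots,\omega_{\tau_t})$ and Bernoulli distributed, so the self-similarity $\pi(\omega)=P_{\tau_t}(\omega)+e^{-S_{\tau_t}(\omega)}\pi(\sigma^{\tau_t}\omega)$ — with $P_{\tau_t}$ the image of a fixed base point under $f_{\omega_1\cdots\omega_{\tau_t}}$ — yields
$$\widehat\mu(\xi)=\E\!\left[e^{-2\pi i\xi P_{\tau_t}}\,\widehat\mu\!\left(e^{-S_{\tau_t}}\xi\right)\right],\qquad O_t:=S_{\tau_t}-t\in[0,M),\ \ M:=-\log\min_j r_j.$$
Iterating this identity along scales $t_1<t_2<\dots$ sweeping $[\,O(1),\log|\xi|\,]$ writes $\widehat\mu(\xi)$ as the expectation of a product of unimodular phases, one per scale band, times $\widehat\mu$ at a bounded frequency. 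The gain in each band comes from a one-block cancellation estimate: conditioning on the walk's entrance overshoot into the band, the average of the associated phase over the digits of the walk inside the band is $\widehat\nu(\zeta)$ for an auxiliary finitely supported measure $\nu$ built from the translations $b_j$ — or, in the degenerate case where those are concentrated at a single point, from the contraction ratios $r_j$ alone, which is harmless since the $r_j$ are not all equal. The quantitative non-lattice property keeps $|\widehat\nu(\zeta)|\le1-c(\zeta)$, with $c(\zeta)>0$ an explicit function of $|\zeta|$, for the frequencies $\zeta$ carrying the bulk of the overshoot mass, so every band contributes a genuine multiplicative gain.

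The hard part is the bookkeeping across bands. Because the band boundaries are first-passage times, consecutive blocks of the walk are independent only conditionally on their entrance overshoots, and one must show that this conditional structure does not dissolve the product of the per-band gains. Here the quantitative renewal theorem of Section~\ref{sec:renewal} is used: it shows that at first-passage level $t$ the law of the entrance overshoot (and of the joint data needed) is within an explicit error — a fixed negative power of $t$, the exponent governed by the diophantine exponent $\ell$ of $\log r_i/\log r_j$ in \eqref{eq:diophantine} — of the stationary overshoot law, so the blocks decouple up to a summable error and the gains genuinely multiply. Since the relevant renewal time is $t\asymp\log|\xi|$, a careful accounting of the number of bands, the per-band gain, and the accumulated decoupling error produces $|\widehat\mu(\xi)|=O\big((\log|\xi|)^{-\beta}\big)$ for some $\beta>0$ depending on $\ell$, the weights $p_j$, and the ratios $r_j$. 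I expect the proof of the quantitative renewal theorem — and inside it, extracting the correct quantitative non-lattice modulus from the diophantine condition via simultaneous rational approximation of the $\log r_j$ — to be the main obstacle; everything else is a careful but essentially routine amplification of the qualitative argument behind Theorem~\ref{thm:main}.
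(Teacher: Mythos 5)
There is a genuine gap, and it sits exactly where you locate the ``gain'': your per-band cancellation estimate attributes the decay to the Fourier transform $\widehat{\nu}(\zeta)$ of an auxiliary measure $\nu$ built from the translations $b_j$, bounded away from modulus $1$ by a ``quantitative non-lattice property''. But the diophantine hypothesis of the theorem concerns only $\log r_i/\log r_j$, i.e.\ the multiplicative structure of the contraction ratios; it says nothing about the spatial distribution of the $b_j$, and no separation or non-degeneracy condition on the translations is assumed. With overlaps the law of the cylinder positions $P_{\tau_t}$ can be badly concentrated (e.g.\ the $b_j$ supported on a coarse arithmetic progression), so $|\widehat{\nu}(\zeta)|=1$ at infinitely many frequencies $\zeta$, and nothing guarantees that the frequencies actually produced by the walk avoid these. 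Your fallback to ``the contraction ratios alone'' only covers the case where all $f_j$ share a fixed point, which is excluded since $F$ would be a singleton; the partially degenerate case is exactly where the argument breaks. Controlling the spatial phases $e^{-2\pi i\xi P_{\tau_t}}$ in the overlapping setting is the hard problem of the dimension theory of self-similar sets, and the paper explicitly avoids it.

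The paper's mechanism is different and is worth contrasting with yours. One first applies Cauchy--Schwarz to $\widehat{\mu}(\xi)=\sum_{w\in\cW_t}p_w\int e^{-2\pi i\xi f_w(x)}d\mu(x)$, which produces a double integral of $\sum_w p_w e^{-2\pi i\xi r_w(x-y)}$ against $\mu\times\mu$ and thereby eliminates the translations entirely: only $r_w=e^{-S_{n_t}}$ survives. Writing $\xi=se^t$, this sum is $\E\big(g_{s(x-y)}(S_{n_t}-t)\big)$ with $g_s(r)=\exp(-2\pi i s e^{-r})$, a single application of the stopping time at $t\asymp\log|\xi|$ rather than an iteration over bands (so no decoupling bookkeeping is needed). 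The quantitative renewal theorem replaces this expectation by the oscillatory integral $\frac{1}{\sigma}\int g_{s(x-y)}(r)p(r)\,dr$ up to an error $O(t^{-1/(4l+1)})\|g_{s(x-y)}\|_{C^1}$, and the decay now comes from oscillation in the overshoot variable $r$ against the piecewise-constant limiting density $p$, giving $O(1/|s(x-y)|)$. The near-diagonal set $|x-y|\le|s|^{-1/2}$ is handled separately by the H\"older regularity $\mu(B(x,r))\le Cr^\alpha$ of non-trivial self-similar measures (Feng--Lau), and balancing $|s|=t^{\beta/2}$ yields the logarithmic rate. So the decay is extracted from the distribution of the lengths of the construction intervals, not their positions; your proposal would need a substantial additional hypothesis (or idea) to make the position-based gain work.
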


\begin{remark}
By using a quantitative equidistribution criterion by Davenport-Erd\"os-LeVeque \cite{DEL}, a particular consequence of the logarithmic decay for Fourier transform of a probability measure $\mu$ on $\R$ is that $\mu$ almost every number is normal in every base. Thus in the setting of Theorem \ref{thm:mainquantitative} we have for any self-similar measure $\mu$ on $F$ that $\mu$ almost every number $x \in F$ is normal in every base. This consequence was pointed out in \cite{GMSZ} and we thank Mel Levin for bringing this to our attention.
\end{remark}

Removing the irrationality of ratios of log-contractions ratios makes the random walk $X_1,X_2,\dots$ on $\R$ generated by $\lambda = \sum_{j \in \cA} p_j \delta_{-\log r_j}$ a lattice, that is, concentrated on an arithmetic progression. Then the renewal theorems do not hold anymore in the same form. In fact, for example in the case of middle $1/3$ Cantor measure, the Fourier transform does not even decay at infinity. However, in the case $\beta$ is not Pisot, the Bernoulli convolution $\mu_\beta$ associated to $\beta$ provides examples of a measure where the Fourier transform does decay at infinity, even with polynomial rate for some algebraic $\beta$, but the additive random walk on $\R$ generated by $\log \beta$ is a lattice. Hence it would be interesting to develop the connection to renewal theory further and find a full classification of self-similar sets $F$ which are of uniqueness and which are of multiplicity. After the completion of this manuscript, these problems have been addressed by Br\'emont \cite{Bre19} and Varj\'u-Yu \cite{VY20}. Moreover, the polynomial Fourier decay case was recently obtained for most self-similar measures by Solomyak \cite{Sol19}.

In this paper we consider the self-similar case, but if we impose that the maps $f_j$ to be suitably nonlinear, such as the inverse branches of the Gauss map $x \mapsto 1/x \mod 1$ and study the Fourier transforms of self-conformal measures $\mu$, then the rates of Fourier decay in Theorem \ref{thm:mainquantitative} for Fourier decay can be improved to power decay, see for example the works \cite{JordanSahlsten,BourgainDyatlov,SahlstenStevens,Li2}. Here the non-lattice condition of contractions $-\log r_j$ is replaced by a non-concentration condition of the log-derivatives of the iterates $-\log (f_{j_1} \circ \dots \circ f_{j_n})'(x)$ as $n \to \infty$. These types of conditions appear in the Fourier decay properties of multiplicative convolutions in the discretized sum-product theory developed by Bourgain \cite{Bourgain2010}.

What about the higher dimensional case? Here the analogue to Theorem \ref{thm:main} and Theorem \ref{thm:mainquantitative} would be to understand Fourier transforms $\widehat{\mu}$ of \textit{self-affine measures} $\mu$ on $\R^d$. They are measures on $\R^d$ associated to affine contractions $f_j =  A_j + b_j$ of $\R^d$, $j \in \cA$, for some finite set $\cA$, where $b_j \in \R^d$ and $A_j \in \mathrm{GL}(d,\R)$ such that
$$\mu = \sum_{j \in \cA} p_j f_j \mu$$
 for some weights $0 < p_j < 1$, $j \in \cA$, with $\sum_{j \in \cA} p_j = 1$. In a follow-up paper \cite{LSaffine}, we apply a similar strategy as we do in this paper by considering renewal theory for random walks on the group $\mathrm{GL}(d,\R)$ coming from $\{A_j  : i \in \cA\}$ to establish a Fourier decay for self-affine measures. The renewal theory we need has been done recently by the first author in \cite{Li2}. Here the non-lattice condition can be replaced by an irreducibility and proximality assumption of the subgroup $\Gamma$ generated by $\{A_j : i \in \cA\}$ as B\'ar\'any, Hochman and Rapaport did recently in their work \cite{BHR} for the computation of Hausdorff dimension of self-affine measures on $\R^2$. Moreover, due to the better rates for quantitative renewal theorems for random walks in real split groups \cite{Li2}, that is, when the Zariski closure of $\Gamma$ is $\R$-splitting, we can improve the rates for the Fourier decay of $\mu$ to power decay.\\
 
\textbf{Organisation of the paper.} The article is organised as follows. In Section \ref{sec:renewal} we describe the key method used in the paper and give the quantitative renewal theorems we need for our results and then prove them in Section \ref{secrentheory}. Then in Section \ref{sec:proofs} we give the proof of Theorem \ref{thm:main}, which implies Theorem \ref{thm:multi} on the multiplicity of self-similar sets, and also in Section \ref{sec:proofs} we prove the quantitative Theorem \ref{thm:mainquantitative} using the quantitative estimates for the renewal theorem stated in Section \ref{sec:renewal}.

\section{Renewal theorems for stopping time of random walks in $\R$}\label{sec:renewal}

The main method used to establish Theorems \ref{thm:main} and \ref{thm:mainquantitative} is based on renewal theory. Renewal theory has a long history of research both in probability theory and dynamical systems, see e.g. \cite{Feller, Lalley} and various other works citing them. Here we will need quantitative renewal theorems for stopping time of random walks in $\R$, which we will now describe. 

Suppose $\lambda$ is a probability measure on $\R^+:=\{x\in\R,\ x>0 \}$ with finite support. Let 
$$\sigma := \int x \, d\lambda(x)$$ 
be the expectation of $\lambda$, which is positive by definition. Renewal type results are valid under more general assumption. For simplicity, we state it under this simple assumption which is sufficient for the proof of main results in the manuscript.
Let $X_1,X_2,\dots$ be i.i.d. sequence of random variables in $\R$ with probability distribution $\lambda$. Write for $n \in \N$ the sum
$$S_n := X_1+X_2+\dots+X_n.$$
For a non-negative bounded Borel function $g$ on $ \R$, define the \textit{renewal operator} by
\begin{align*}
Rg(t) = \sum_{n=0}^{\infty}\E (g(S_n-t)), \quad t \in \R.
\end{align*}
Because of the non-negativity of $g$, this sum is well defined. Kesten's \textit{key renewal theorem} \cite{Kesten} considers the limit behaviour of $R g(t)$ as $t \to \infty$. The algebraic properties of the support $\supp \lambda$ of the distribution $\lambda$ will be crucial in the behaviour of $S_{n} - t$ as $t \to \infty$. If $\lambda$ is \textit{non-lattice}, that is, $\supp \lambda$ generates a dense additive subgroup of $\R$, then the limit $Rg(t)$ as $t\rightarrow \infty$ is given by $\frac{1}{\sigma}\int g(x)\,d x$, where $\, dx$ is the Lebesgue measure.

Here we need to study the convergence for randomly stopped processes $S_1,\dots,S_{n_t}$ at a \textit{stopping time} 
$$n_t := \inf\{n \in \N : S_n \geq t\}$$
and see how the residual process $S_{n_t} - t$ behaves as $t \to \infty$.  Kesten's \textit{renewal theorem for stopping time} \cite{Kesten} says that the residue distribution $S_{n_t}-t$ will converge to a distribution absolutely continuous with respect to the Lebesgue measure when $t$ tends to infinity. 


Let $|\supp \lambda|$ be the supremum of the absolute value of the elements in the support of $\lambda$. Define a local $C^1$ norm on the interval $(-1,|\supp\lambda|+1)$ by 
\begin{align}\label{eq:localC1}\|g\|_{C^1} := \sup\{|g(x)|+|g'(x)| : x\in (-1,|\supp\lambda|+1)\}.\end{align}
We will have the following renewal theorem.
\begin{prop}\label{prop:renewal}
If $\lambda$ is a probability measure on $\R^+$ with finite support and non-lattice, then we have for $t > |\supp \lambda|+1$ and a $C^1$ function $g$ on $\R$ the following asymptotics as $t \to +\infty$:
\begin{equation}
	\E (g(S_{n_t}-t))=\frac{1}{\sigma}\int_{\R^+}g(x)p(x)d x+o_t\| g\|_{C^1},
\end{equation}
where $o_t$ tends to zero as $t$ is going to $\infty$. Here $p(x) := \lambda((x,\infty))$ is a piecewise constant function and vanishes when $x$ passes the support of $\lambda$. 
\end{prop}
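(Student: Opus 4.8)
The plan is to reduce Proposition~\ref{prop:renewal} to a \emph{key renewal theorem} for the operator $R$ and then to prove the latter by Fourier analysis of the renewal measure $U:=\sum_{n\ge 0}\lambda^{*n}$, which is locally finite because $\supp\lambda$ is finite and bounded away from $0$.

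\emph{Step 1: reduction.} For $t>0$ one has $n_t\ge 1$ and $\{n_t=k\}=\{S_{k-1}<t\le S_k\}$; writing $S_k=S_{k-1}+X_k$ and using the independence of $X_k$ from $(X_1,\dots,X_{k-1})$ one obtains
\begin{equation*}
\E\big(g(S_{n_t}-t)\big)=\sum_{m\ge 0}\E\big(\Psi(S_m-t)\big)=R\Psi(t),\qquad \Psi(w):=\one_{\{w<0\}}\!\!\int_{\{y\in\supp\lambda:\,y\ge -w\}}\!\!\!\!g(w+y)\,d\lambda(y).
\end{equation*}
The function $\Psi$ is supported in $[-|\supp\lambda|,0)$, is $C^1$ off the finite set consisting of $0$ and the points $-a$ with $a$ an atom of $\lambda$, where it has jump discontinuities, and it satisfies $|\Psi(w)|+|\Psi'(w)|\le\|g\|_{C^1}$ off that set (move the absolute value inside both integrals and use that $\lambda$ is a probability measure), with $\|g\|_{C^1}$ as in \eqref{eq:localC1}. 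Two applications of Fubini's theorem identify the main term,
\begin{equation*}
\frac1\sigma\int_{\R}\Psi(w)\,dw=\frac1\sigma\int_{\supp\lambda}\!\int_0^y g(x)\,dx\,d\lambda(y)=\frac1\sigma\int_{\R^+}g(x)\,p(x)\,dx,\qquad p(x)=\lambda((x,\infty)),
\end{equation*}
so the proposition is equivalent to the quantitative key renewal theorem $R\Psi(t)=\tfrac1\sigma\int_\R\Psi+o_t\|g\|_{C^1}$ for such piecewise-$C^1$, compactly supported $\Psi$.

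\emph{Step 2: key renewal theorem by Fourier analysis.} I would first prove it for $\Psi\in C_c^\infty$. Write $R\Psi(t)=\int_\R\Psi(s-t)\,dU(s)$ and regularize $U$ by $U_\epsilon:=\sum_{n\ge 0}e^{-\epsilon n}\lambda^{*n}$, $\epsilon>0$, so that $\widehat{U_\epsilon}(\xi)=(1-e^{-\epsilon}\widehat\lambda(\xi))^{-1}$ is an honest function. Using $1-\widehat\lambda(\xi)=2\pi i\sigma\xi+O(\xi^2)$ one sees that near $\xi=0$ we have $\widehat{U_\epsilon}(\xi)\approx(\epsilon+2\pi i\sigma\xi)^{-1}=\tfrac1\sigma\,\widehat{e^{-\epsilon\,\cdot/\sigma}\one_{[0,\infty)}}(\xi)$, while the \emph{non-lattice hypothesis} forces $\widehat\lambda(\xi)\ne 1$ for every $\xi\ne 0$, so that $\widehat{U_\epsilon}-\tfrac1\sigma\,\widehat{e^{-\epsilon\,\cdot/\sigma}\one_{[0,\infty)}}$ stays bounded on compacta uniformly in $\epsilon$. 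Letting $\epsilon\downarrow 0$ gives, as tempered distributions,
\begin{equation*}
U=\frac1\sigma\,\mathrm{Leb}\big|_{[0,\infty)}+\nu,\qquad\text{with }\widehat\nu\text{ a continuous function on }\R.
\end{equation*}
For $t$ past $\diam(\supp\Psi)$ the Lebesgue part contributes exactly $\tfrac1\sigma\int_\R\Psi$, and the remaining term equals a Fourier integral $\int\widehat\Psi(\xi)\,\widehat\nu(\xi)\,e^{-2\pi i\xi t}\,d\xi$ (after a harmless change of variables), which tends to $0$ as $t\to+\infty$ by the Riemann--Lebesgue lemma, since $\widehat\Psi$ decays rapidly and $\widehat\nu$ grows at most polynomially (see the last paragraph); the rate is governed by the modulus of continuity, resp. the polynomial bound, of $\widehat\nu$. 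Sandwiching indicators between smooth bumps upgrades this to Blackwell's estimate $U([t,t+h))\to h/\sigma$.

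\emph{Step 3: from $C_c^\infty$ to piecewise $C^1$.} Given the $\Psi$ of Step~1 and a scale $\delta>0$, decompose $\Psi=A_\delta+B_\delta$, where $A_\delta\in C_c^\infty$ is obtained by smoothing each of the finitely many jumps of $\Psi$ at scale $\delta$ (placing the compensating step beyond $\supp\Psi$), so that a suitable norm of $A_\delta$ is $O(\delta^{-C}\|g\|_{C^1})$, and $B_\delta:=\Psi-A_\delta$ is supported in $\delta$-neighbourhoods of those jumps with $\|B_\delta\|_\infty\lesssim\|g\|_{C^1}$. Step~2 applies to $A_\delta$, and for $B_\delta$ Blackwell's estimate shows that for $t$ large $U$ charges $\delta$-neighbourhoods of points of size $\asymp t$ with mass $O(\delta)$, whence $|RB_\delta(t)|\lesssim\delta\|g\|_{C^1}$; together with $\tfrac1\sigma\int B_\delta=O(\delta\|g\|_{C^1})$ this yields $R\Psi(t)=\tfrac1\sigma\int_\R\Psi+o_t\,\delta^{-C}\|g\|_{C^1}+O(\delta\|g\|_{C^1})$, and letting $\delta=\delta(t)\downarrow 0$ slowly gives the claimed $o_t\|g\|_{C^1}$.

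\emph{The main obstacle.} The heart of the argument is the behaviour of $(1-\widehat\lambda(\xi))^{-1}$ as $|\xi|\to\infty$. Since $\supp\lambda$ is finite, $\widehat\lambda$ is a finite exponential sum, hence almost periodic, so $|\widehat\lambda(\xi)|$ returns arbitrarily close to $1$ along sequences $\xi\to\infty$, while the non-lattice hypothesis forbids only the value $1$ itself; converting this qualitative fact into the quantitative growth bound on $\widehat\nu$ needed in Steps~2--3 requires a compactness/equidistribution argument on the torus generated by the atoms of $\lambda$. For Proposition~\ref{prop:renewal} this produces merely an unquantified $o_t$, whereas when the atoms satisfy a Diophantine relation — the situation of Theorem~\ref{thm:mainquantitative}, where $\supp\lambda=\{-\log r_j\}$ with some $\log r_i/\log r_j$ Diophantine — one obtains an effective polynomial lower bound for $|1-\widehat\lambda(\xi)|$ away from the origin, and hence the logarithmic rate stated there.
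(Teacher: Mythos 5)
Your Step 1 reduction is correct and is essentially the paper's route collapsed into one variable: your $\Psi(w)=\one_{\{w<0\}}\int_{y\ge -w}g(w+y)\,d\lambda(y)$ is exactly the paper's residue-process-with-cutoff $E_C f$ with the extra coordinate $y$ integrated out, and your computation of the main term $\tfrac1\sigma\int\Psi=\tfrac1\sigma\int_{\R^+}g\,p$ matches. The overall architecture of Steps 2--3 (Abel regularisation of $U$, splitting $(1-\widehat\lambda)^{-1}$ into the pole $\tfrac{1}{\sigma}\cdot\tfrac{1}{2\pi i\xi}$ at the origin plus a remainder $\widehat\nu$, smoothing the jumps of $\Psi$ at scale $\delta$, and a Blackwell-type bound for the exceptional set) is also the paper's.

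There is, however, one genuine gap, and it sits exactly where you placed your ``main obstacle'': Step 2 needs $\widehat\Psi\,\widehat\nu\in L^1$ and Riemann--Lebesgue, for which you invoke a polynomial growth bound on $\widehat\nu(\xi)\approx u(-i\xi)=(1-\widehat\lambda(\xi))^{-1}-(2\pi i\sigma\xi)^{-1}$. No compactness or equidistribution argument can deliver this, because the bound is simply false for general non-lattice $\lambda$. Take $\lambda=\tfrac12\delta_1+\tfrac12\delta_\alpha$ with $\alpha$ a Liouville number: then $|1-\widehat\lambda(2\pi q)|\asymp d(2\pi q\alpha,2\pi\Z)^2\le (2\pi)^2|q\alpha-p|^2$ is smaller than $q^{-N}$ for every $N$ along a sequence $q\to\infty$, so $|u(i\xi)|$ exceeds every polynomial infinitely often. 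Since your $A_\delta$ is mollified in \emph{physical} space, $\widehat{A_\delta}$ is supported on all of $\R$ and these bad frequencies still enter; rapid (but not quantified-against-arbitrary-growth) decay of $\widehat{A_\delta}$ does not save the integral. The fix --- and the one the paper uses --- is to mollify with a kernel $\psi_\delta$ whose Fourier transform is \emph{compactly supported} in $[-\delta^{-2},\delta^{-2}]$ (Paley--Wiener, so $\psi_\delta$ is not compactly supported, which is harmless). Then the only quantity entering the error term is $O_\delta=\sup_{|\xi|\le\delta^{-2}}\bigl(|u(i\xi)|+|\partial_\xi u(i\xi)|\bigr)$, which is finite for each fixed $\delta$ merely because $u$ is continuous on the imaginary axis (the paper's Proposition \ref{prop:invtran}), with no growth rate needed; one then lets $\delta=\delta(t)\to0$ slowly enough that $O_{\delta(t)}/t\to0$, which yields the unquantified $o_t$. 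Under the Diophantine hypothesis one gets $O_\delta\le C\delta^{-4l}$ and hence the explicit rate of Proposition \ref{prop:renewalquantitative}, as you correctly anticipate. With this one change --- band-limited rather than compactly supported mollification, and correspondingly running your Step 3 decomposition in frequency rather than relying on global bounds for $\widehat\nu$ --- your argument goes through and coincides with the paper's.
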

Proposition \ref{prop:renewal} is equivalent to the classical Kesten's renewal theorem for stopping time \cite{Kesten}. Because the unit ball of $C^1([0,|\supp\lambda|])$ is precompact in $C^0([0,|\supp\lambda|])$, the uniform speed on $C^1$ norm is equivalent to the convergence in distribution of $S_{n_t}-t$, which is exactly Kesten's theorem. We thank one of the anonymous referees for pointing this out to us. 

Proposition \ref{prop:renewal} will be used to prove Theorem \ref{thm:main} later in Section \ref{sec:proofs} and the non-lattice condition for $\lambda$ is obtained using the irrationality of $\log r_j / \log r_\ell$. 
However, in the setting of Theorem \ref{thm:mainquantitative}, where we assume $\log r_j / \log r_\ell$ is diophantine, we will need a quantitative version of Proposition \ref{prop:renewal}, where one needs a stronger assumption on the distribution $\lambda$ called ($l$-)\textit{weakly diophantine}, that is, for some $l > 0$:
	\[\liminf_{|b|\rightarrow \infty}|b|^l|1-\cL\lambda(ib)|>0, \]
	where $\cL\lambda$ is the \textit{Laplace transform} of $\lambda$, defined for $z \in \C$ by the formula
	$$\cL\lambda(z)=\int e^{-zx}d\lambda(x).$$ 
This condition means heuristically that the random walk $X_1,\dots,X_n$ quantitatively avoids concentration on lattices and could be considered as a spectral gap condition for the random walk. The number $l > 0$ will be reflected in the rate of the renewal theorem for stopping time of random walks:

\begin{prop}\label{prop:renewalquantitative}
 If $\lambda$ is $l$-weakly diophantine, then for $t>|\supp \lambda|+1$ we have
\begin{equation}
\E (g(S_{n_t}-t))=\frac{1}{\sigma}\int_{\R^+}g(x)p(x)d x+O(t^{-1/(4l+1)})\|g\|_{C^1}.
\end{equation}
\end{prop}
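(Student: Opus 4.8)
The plan is to prove Proposition~\ref{prop:renewalquantitative} as a quantitative form of Kesten's key renewal theorem on $\R^{+}$, so that Proposition~\ref{prop:renewal} becomes its qualitative shadow, with ``non-lattice'' replaced by ``weakly diophantine''. First I would reduce the statement to a quantitative Blackwell estimate for the renewal function. Conditioning on the first step $X_{1}$ in the definition of $n_{t}$ gives, for $t>0$, the renewal equation $\E(g(S_{n_{t}}-t))=f(t)+\int\E(g(S_{n_{t-x}}-(t-x)))\,d\lambda(x)$ with $f(u):=\mathbf{1}_{u>0}\int_{x\ge u}g(x-u)\,d\lambda(x)$, and since $\lambda$ is carried by $\R^{+}$, iteration yields $\E(g(S_{n_{t}}-t))=\sum_{n\ge0}\E(f(t-S_{n}))=\int f(t-s)\,dU(s)$, where $U(s):=\sum_{n\ge0}\P(S_{n}\le s)$ is the renewal function. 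Put $M:=|\supp\lambda|$; then $f$ is supported in $[0,M]$, is of bounded variation with $\|df\|_{\mathrm{TV}}\lesssim_{\lambda}\|g\|_{C^{1}}$ and $\|f\|_{\infty}\le\|g\|_{\infty}$, and $\tfrac1\sigma\int f=\tfrac1\sigma\int_{\R^{+}}g\,p$ by Fubini. Classical renewal theory gives $U(s)=s/\sigma+c_{\infty}+r(s)$ with $c_{\infty}=\tfrac1{2\sigma^{2}}\int x^{2}\,d\lambda$ and $r(s)\to0$, and since a constant integrator contributes nothing, a Riemann--Stieltjes integration by parts of $\E(g(S_{n_{t}}-t))-\tfrac1\sigma\int f=\int f(t-s)\,d\bigl(U(s)-s/\sigma\bigr)=\int f(t-s)\,dr(s)$ yields
\[
\Bigl|\,\E\bigl(g(S_{n_{t}}-t)\bigr)-\tfrac1\sigma\!\int_{\R^{+}}g(x)p(x)\,dx\,\Bigr|\ \lesssim_{\lambda}\ \|g\|_{C^{1}}\,\sup_{s\in[t-M,\,t]}|r(s)|.
\]
It therefore suffices to prove the quantitative Blackwell bound $|r(s)|=O(s^{-1/(4l+1)})$.

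To estimate $r$ I would pass to the Laplace transform. For $\Re z>0$ the measure $dU$ has Laplace transform $(1-\cL\lambda(z))^{-1}$, so $\cL r(z)=\tfrac1{z(1-\cL\lambda(z))}-\tfrac1{\sigma z^{2}}-\tfrac{c_{\infty}}{z}$. The weakly diophantine hypothesis in particular makes $\lambda$ non-lattice, so $1-\cL\lambda$ is zero-free on $\{\Re z\ge0\}\setminus\{0\}$, while at the origin $1-\cL\lambda(z)=\sigma z-\tfrac12\bigl(\int x^{2}\,d\lambda\bigr)z^{2}+O(z^{3})$ and the choice of $c_{\infty}$ is exactly what cancels the pole of $\tfrac1{z(1-\cL\lambda(z))}-\tfrac1{\sigma z^{2}}$ at $0$. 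Hence $\cL r$ extends analytically across the imaginary axis, and there the weakly diophantine lower bound $|1-\cL\lambda(i\xi)|\gtrsim_{\lambda}|\xi|^{-l}$ gives $|\cL r(i\xi)|\lesssim_{\lambda}|\xi|^{l-1}$ and, differentiating, $|\partial_{\xi}\cL r(i\xi)|\lesssim_{\lambda}|\xi|^{2l-1}$ for $|\xi|$ large (both bounded near $0$). Formally $r(s)=\tfrac1{2\pi}\int_{\R}e^{i\xi s}\cL r(i\xi)\,d\xi$, but $\cL r(i\cdot)$ need not be integrable at infinity, so this must be handled with care.

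The hard part will be extracting a rate of decay from this oscillatory integral. I would smooth: replace $r$ by $r*\rho_{\delta}$, the average of $r$ against a fixed smooth bump of width $\delta$, so that $\cL(r*\rho_{\delta})(i\xi)=\cL r(i\xi)\widehat{\rho_{\delta}}(\xi)$ with $\widehat{\rho_{\delta}}(\xi)=\widehat\rho(\delta\xi)$ Schwartz; now the inversion integral converges absolutely (and the contour shift onto the imaginary axis is legitimate for any $l$, since $\cL\rho_{\delta}$ decays rapidly along verticals), and one integration by parts in $\xi$, exploiting the oscillation $e^{i\xi s}$ and the bound on $\partial_{\xi}\cL r$, gives $|r*\rho_{\delta}(s)|\lesssim_{\lambda}\delta^{-2l}/s$. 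On the other hand, $|r(s)-r*\rho_{\delta}(s)|$ is at most the oscillation of $r$, equivalently of $U$, on an interval of length $\asymp\delta$ about $s$, which is $O(\delta/\sigma)$ from the part of the renewal measure of density $1/\sigma$ plus the total mass that the renewal measure puts on its atoms near level $s$. Bounding this last quantity for large $s$ — a local limit estimate asserting that, far from the origin, the renewal measure carries only little mass on short intervals, and precisely the step where the weakly diophantine property is used quantitatively — is the main obstacle, and it is this estimate that forces the exponent $1/(4l+1)$. Balancing the two error contributions $\delta^{-2l}/s$ and the small-scale fluctuation term by a suitable choice of $\delta$ then yields $|r(s)|=O(s^{-1/(4l+1)})$, and hence the proposition. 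The remaining bookkeeping — that $f$, and so everything, depends on $g$ only through its local $C^{1}$ norm on $(-1,M+1)$, the uniformity of all implied constants in $t$, and the negligible contribution of low frequencies — is routine.
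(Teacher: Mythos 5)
Your overall architecture is sound and runs parallel to the paper's: both proofs push the problem through the key renewal theorem via the Laplace transform, exploit the weakly diophantine hypothesis only through polynomial bounds on $u(i\xi)=\frac{1}{1-\cL\lambda(i\xi)}-\frac{1}{\sigma i\xi}$ and $\partial_\xi u(i\xi)$ on a window $|\xi|\le T$, mollify at a scale $\delta$, and balance $\delta$ against $t$. The structural difference is that you integrate out the overshoot step $X_{n+1}$ immediately, reducing to a one-variable renewal equation $\E(g(S_{n_t}-t))=\int f(t-s)\,dU(s)$ and then to a quantitative Blackwell estimate $U(s)=s/\sigma+c_\infty+O(s^{-\gamma})$, whereas the paper keeps the joint law of $(X_{n+1},S_n-t)$ (the ``residue process with cutoff'', Proposition \ref{prop:rescut}) and never introduces $U$ or $c_\infty$. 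Your reduction, the BV bound $\|df\|_{\mathrm{TV}}\lesssim\|g\|_{C^1}$, the identity $\frac1\sigma\int f=\frac1\sigma\int g\,p$, the analyticity of $\cL r(z)=(u(z)-u(0))/z$ across $i\R$, and the bound $|r*\rho_\delta(s)|\lesssim\delta^{-2l}/s$ are all correct.

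The genuine gap is the step you yourself label ``the main obstacle'' and then do not prove: the bound on $|r(s)-r*\rho_\delta(s)|$, i.e.\ on the mass that the renewal measure $\sum_{n\ge0}\lambda^{*n}$ (which here is purely atomic, so there is no literal ``part of density $1/\sigma$'') assigns to an interval of length $\asymp\delta$ around a large $s$. Without this local non-concentration estimate the balancing argument does not close, and asserting that it ``forces the exponent $1/(4l+1)$'' is not a proof of it. The estimate is true and is exactly the paper's Proposition \ref{prop:renint}: one dominates $\one_{[a,b]}\le 3\,\psi_\delta*\one_{[a,b]}$ (using that the mollifier keeps mass $\ge(1-C_1\delta)/2$ on either half of $[-\delta,\delta]$) and applies the key renewal theorem (Proposition \ref{prop:renreg}) to the mollified indicator, whose Fourier transform is supported where $u$ is controlled; this yields $R(\one_{[-\delta,\delta]})(t)\lesssim\delta\bigl(1/\sigma+O_\delta/t\bigr)$ with $O_\delta$ polynomial in $\delta^{-1}$ by the weakly diophantine hypothesis. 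So the missing step is fillable with the very machinery you have already set up, and once it is inserted your balancing gives a polynomial rate (in fact, with your scale-$\delta$ mollifier the two errors $\delta^{-2l}/t$ and $\delta$ balance at $\delta=t^{-1/(2l+1)}$, which is stronger than the stated $O(t^{-1/(4l+1)})$ — the paper's weaker exponent comes from its choice of mollifier $\psi_\delta(x)=\delta^{-2}\psi(x/\delta^2)$ with frequency cutoff $\delta^{-2}$). As written, however, the proposal defers rather than proves the one estimate where the diophantine condition does quantitative work, so it is not yet a complete proof.
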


This polynomial error term is new for the renewal theorem for stopping time. A polynomial error term in the key renewal theorem was obtained by Carlsson in \cite{Carlsson} under the same weakly diophantine hypothesis using Fourier transform. What we do here uses key renewal theorem to prove renewal theorem for stopping time and we track the error term carefully such that the error term in renewal theorem for stopping time can be computed using error term in key renewal theorem.
With stronger hypothesis, that is $\liminf_{|b|\rightarrow \infty}|1-\cL\lambda(ib)|>0$, Blanchet and Glynn give an exponential error term of key renewal theorem in \cite{BG}. However, this stronger hypothesis is never true for finitely supported measures $\lambda$. See \cite{Boyer} for more details and bibliography on the error term of the key renewal theorem. 

In our case of self-similar measures associated to an iterated function system $f_j (x) = r_j x + b_j$ and weights $\sum_{j \in \cA} p_j = 1$, the random walk we will use is given by $X_k = -\log r_{j_k}$, $k \in \N$, where $j_k = j$ with probability $p_j$. Thus
$\lambda = \sum_{j \in \cA} p_j \delta_{-\log r_j}.$
In Section \ref{sec:proofs} we verify the assumptions of the renewal theorems Proposition \ref{prop:renewal} and Proposition \ref{prop:renewalquantitative}. We will explain how they are used to prove the main results on Fourier decay.

\section{Proof of the Fourier decay}\label{sec:proofs}

\subsection{Symbolic notations} 

Let us write $\cA^*$ as the space of all words $w$ with entries in $\cA$ of finite length. Moreover, $\cA^n$ is the space of all words of length $n$ with entries in $\cA$ and $\cA^\infty$ the infinite length words. If $w = w_1w_2\dots w_n \in \cA^n$, define the composition
$$f_w := f_{w_1} \circ \dots \circ f_{w_n}.$$
Then $f_w$ is again a similitude with a contraction
$$r_w := r_{w_1} \dots r_{w_n}.$$
Using this notation the self-similarity of $\mu$ implies that
$$\mu = \sum_{w \in \cA^n} p_w f_w \mu,$$
where
$$p_w := p_{w_1}\dots p_{w_n} > 0$$
as the product of weights $p_j$, $j \in \cA$, according to the entries of the word $w$. See the book by Falconer \cite{Falconer1} for more details, notations and history on self-similar sets and measures.

\subsection{Reduction to exponential sums} \label{sec:reduction} Given $\xi \in \R$ and $t > 0$, the first step is to reduce the Fourier transform of $\mu$ to double $\mu$ integrals over exponential sums determined by a stopping time $n_t$. Recall that we defined in Section \ref{sec:renewal} for $t > 0$ the stopping time
$$n_t := \inf\{n \in \N : S_n \geq t\},$$
where $S_n = X_1 + \dots + X_n$ and $X_j$ are i.i.d. random variables distributed according to
$$\lambda = \sum_{j \in \cA} p_j \delta_{-\log r_j}.$$
We can identify the stopping time $n_t$ as follows using symbolic notations. For an infinite word $\omega = w_1w_2\dots \in \cA^\infty$ write $n_t(\omega)$ as the smallest $n \in \N$ such that the restriction $w := w_1\dots w_n$ satisfies $-\log r_{w} \geq t$, that is $r_w \leq e^{-t}$. Thus in particular $r_w$ will be roughly $e^{-t}$: we have $r_w \in [c e^{-t},e^{-t}]$ for some constant $c > 0$. Then this symbolic definition $n_t(\omega)$ agrees with the stopping time $n_t=\inf\{n \in \N : S_n \geq t\}$ by setting the probability space $(\Omega,\cF,\P)$ with $\Omega = \cA^\infty$, $\cF$ is the Borel $\sigma$-algebra generated by cylinder sets and $\P = (\sum_{j \in \cA} p_j\delta_j)^\infty$. With this in mind, write
$$\cW_t := \{w_1\dots w_{n_t(\omega)} : \omega \in \cA^\infty\}$$
and let $\P_t$ be the probability distribution on $\cA^*$ associated to the stopping time, that is,
$$\P_t := \sum_{w \in \cW_t} p_w \delta_{w}.$$
Thus the support $\spt \P_t = \cW_t$. Now the distribution of $S_{n_t}$ agrees with the distribution of $-\log r_w$, where $w$ follows the distribution of $\P_t$. Then for a continuous function $g$ on $\R$ we obtain
\begin{equation}\label{equ:et}
	\sum_{w \in \cW_t} p_w g(-\log r_w-t)=\int g(-\log r_w-t)\, d\P_t(w)=\E(g(S_{n_t}-t)).
\end{equation}
See Figure \ref{fig1} for an illustration of the words $\cW_t$.

\begin{figure}[ht!]
\includegraphics[scale=0.15]{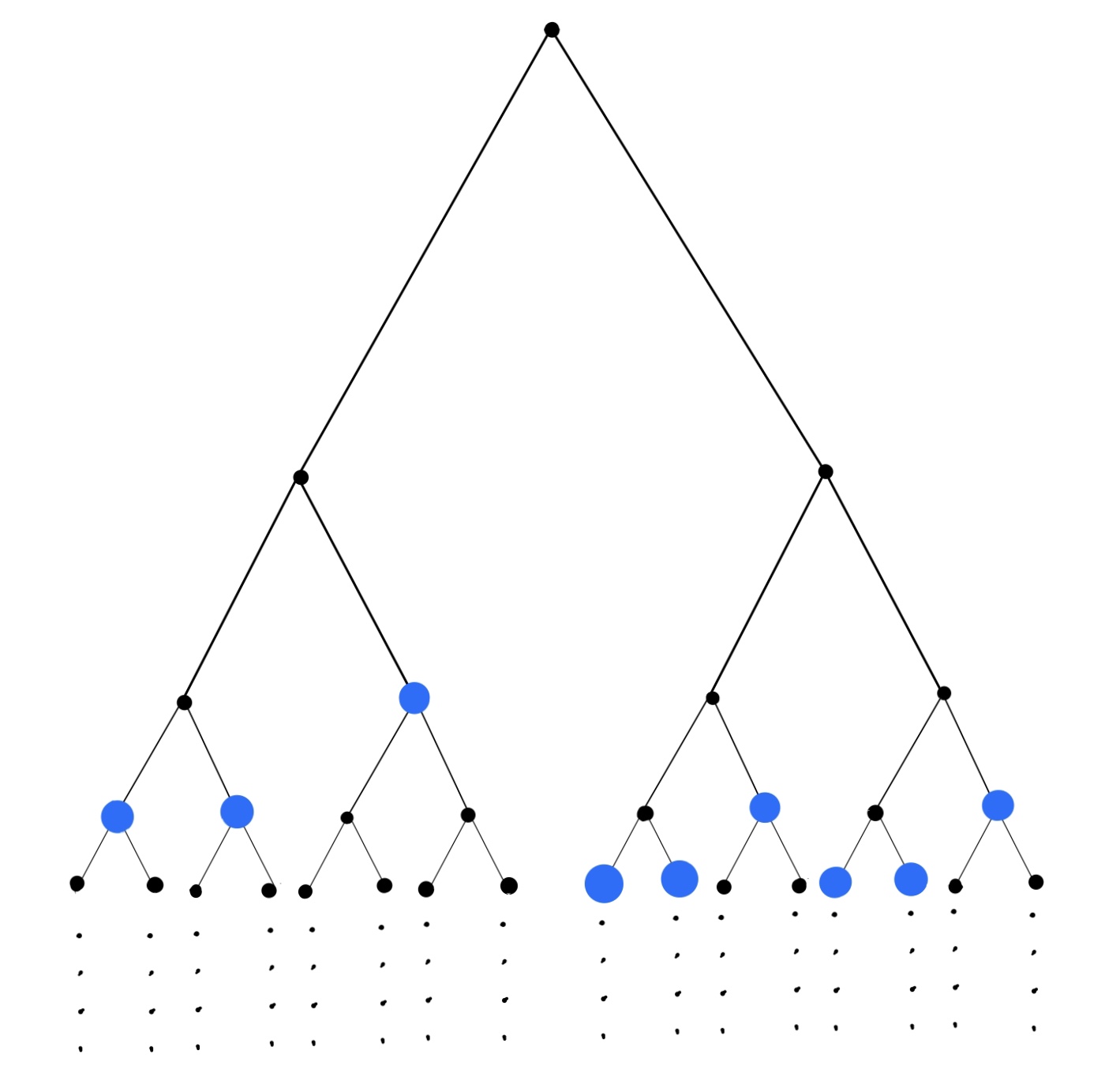}
\caption{Tree $\cA^\infty$ in the case $|\cA| = 2$. The finite blue words define the words in $\cW_t \subset \cA^*$. 
	They are defined to be the shortest words $w \in \cA^*$ with contraction $r_w \leq e^{-t}$. 
}
\label{fig1}
\end{figure}

The reason to use the stopping time here is that we want to use the equidistribution phenomenon of the renewal theorem (Proposition \ref{prop:renewal}), which combined with high-oscillation can give decay of exponential sums.

 Later we will make $t$ depend on $\xi$ and let $|\xi| \to \infty$, but for now we keep everything fixed.

\begin{lemma}\label{lma:estimatefourier}For every $\xi \in \R$ and $t > 0$ we have
$$|\widehat{\mu}(\xi)|^2 \leq \iint \sum_{w \in \cW_t} p_w  e^{-2\pi i \xi (f_w(x)-f_w(y))} \, d\mu(x) \, d\mu(y). $$
\end{lemma}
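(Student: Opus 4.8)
The plan is to expand the self-similar structure of $\mu$ and apply Cauchy--Schwarz. First I would use the identity $\mu = \sum_{w \in \cW_t} p_w f_w\mu$, which holds because $\cW_t$ is a complete antichain in the tree $\cA^\infty$ (every infinite word has exactly one prefix in $\cW_t$), so iterating the basic self-similarity relation along each branch until the stopping time gives a genuine decomposition of $\mu$ with weights summing to one. Plugging this into the definition of $\widehat\mu(\xi)$ yields
\[
\widehat\mu(\xi) = \sum_{w \in \cW_t} p_w \int e^{-2\pi i \xi f_w(x)}\, d\mu(x).
\]

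Next I would apply the Cauchy--Schwarz inequality (or equivalently Jensen's inequality) with respect to the probability vector $(p_w)_{w \in \cW_t}$. Writing $a_w := \int e^{-2\pi i \xi f_w(x)}\, d\mu(x)$, we have $|\widehat\mu(\xi)|^2 = \big|\sum_w p_w a_w\big|^2 \le \sum_w p_w |a_w|^2$ since $\sum_w p_w = 1$. Then I would expand $|a_w|^2 = a_w \overline{a_w}$ as a double integral:
\[
|a_w|^2 = \iint e^{-2\pi i \xi(f_w(x) - f_w(y))}\, d\mu(x)\, d\mu(y),
\]
so that
\[
|\widehat\mu(\xi)|^2 \le \sum_{w \in \cW_t} p_w \iint e^{-2\pi i \xi(f_w(x) - f_w(y))}\, d\mu(x)\, d\mu(y),
\]
and finally swap the (finite) sum over $w$ with the double integral by Fubini to land exactly on the claimed bound. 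The right-hand side is manifestly real and non-negative once one sums over $w$, even though individual terms need not be, which is consistent with the statement.

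I do not expect any serious obstacle here; the only point requiring a little care is justifying that $\sum_{w \in \cW_t} p_w f_w\mu = \mu$. This follows by induction on the length of the longest word in $\cW_t$ (or by a martingale/telescoping argument on the tree), repeatedly substituting $f_v\mu = \sum_{j \in \cA} p_j f_{vj}\mu$ for any non-maximal prefix $v$ until every branch has reached $\cW_t$; finiteness of $\cA$ and of $\cW_t$ makes this rigorous with no convergence issues. Everything else is a one-line application of Cauchy--Schwarz and Fubini, so the lemma is essentially a formal manipulation setting up the exponential sums $\sum_{w \in \cW_t} p_w e^{-2\pi i \xi(f_w(x) - f_w(y))}$ to which the renewal-theoretic oscillation estimates will later be applied.
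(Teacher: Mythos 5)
Your argument is correct and follows essentially the same route as the paper: decompose $\mu = \sum_{w \in \cW_t} p_w f_w \mu$ via the stopping time (the paper justifies this by the martingale stopping theorem, citing an analogue in an earlier work, while your antichain/induction argument is an equally valid justification), then apply Cauchy--Schwarz with respect to the probability weights $p_w$ and expand the square as a double integral. No gaps.
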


\begin{proof}
Firstly, by
$$\mu = \sum_{j \in \cA} p_j f_j \mu$$ 
and the martingale stopping theorem, we see that for any $t > 0$,  we can write
$$\mu = \sum_{w \in \cW_t} p_w f_w \mu.$$
(The proof of this is similar to \cite[Proposition 3.5]{Li1}.) Hence we obtain
$$\widehat{\mu}(\xi) = \sum_{w \in \cW_t}  p_w \int e^{-2\pi i \xi f_w(x)} \, d\mu(x).$$
Thus by Cauchy-Schwarz, we have
$$|\widehat{\mu}(\xi)|^2 \leq \sum_{w \in \cW_t} p_w \Big|\int e^{-2\pi i \xi f_w(x)} \, d\mu(x)\Big|^2.$$
Opening up we see that
\begin{align*}
\sum_{w \in \cW_t} p_w \Big|\int e^{-2\pi i \xi f_w(x)} \, d\mu(x)\Big|^2 &= \iint \sum_{w \in \cW_t} p_w  e^{-2\pi i \xi (f_w(x)-f_w(y))} \, d\mu(x) \, d\mu(y). 
\end{align*}
\end{proof}

Thus to prove Fourier decay, we would need to prove
\begin{align}\label{eq:needed}\iint \sum_{w \in \cW_t} p_w  e^{-2\pi i \xi (f_w(x)-f_w(y))} \, d\mu(x) \, d\mu(y) \to 0\end{align}
as $|\xi| \to \infty$ for a suitable $t = t(\xi) \to \infty$. If we want a rate for the Fourier decay, we need to control the speed of convergence in \eqref{eq:needed}. In order to do this, we first write $\xi$ as follows
$$\xi=se^t$$
using parameters $s\in\R$ and $t>0$. Later we will first take $|s|$ large, then take $t > 0$ large enough depending on $s$. Using these parameters, write
\begin{align}\label{eq:delta}\delta := |s|^{-1/2}.\end{align}
Then define the tube in $\R^2$:
$$A_\delta := \{(x,y) \in \R^2 : |x-y| \leq \delta\}.$$
We will split \eqref{eq:needed} into two cases depending on how close $x$ and $y$ are in terms of the $\delta > 0$ defined above. We will have the following two propositions given in Proposition \ref{prop:nearby} and Proposition \ref{prop:far}, which together imply Theorem \ref{thm:main}. For the quantitative part, we also need Proposition \ref{prop:farquant} to control the rate in \eqref{eq:needed}.

\subsection{Controlling nearby points} \label{sec:HY}

The first one is on the nearby points $x,y \in \R$, that is, those with $|x-y| \leq \delta$, and here is where we use the fact that $F$ is not a singleton. 
By \cite[Proposition 2.2]{FengLau}, due to $F$ is not a singleton, there exist $r_0 > 0$, $\alpha > 0$ and $C > 0$ such that for all $0 < r < r_0$ and $x\in F$ we have
\begin{align}\label{eq:decay}
\mu(B(x,r)) \leq Cr^{\alpha}.
\end{align}
A measure which satisfies this condition is sometimes called \textit{H\"older regular}. Using the decay \eqref{eq:decay} of the $\mu$ measure on balls, we can control the nearby points in the following lemma:

\begin{prop}\label{prop:nearby} There exists $C > 0$ such that for any $0 < \delta < r_0$, where $r_0 > 0$ is chosen such that the condition \eqref{eq:decay} hold for $\mu$, and for all $\xi \in \R$, we have
$$\Big|\iint_{A_\delta} \sum_{w \in \cW_t} p_w  e^{-2\pi i \xi (f_w(x)-f_w(y))} \, d\mu(x) \, d\mu(y) \Big| \leq C \delta^\alpha.$$
\end{prop}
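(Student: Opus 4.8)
The plan is to observe that over the tube $A_\delta$ the oscillation of the exponential sum is of no help, so one simply bounds the integrand by its trivial size and the whole content reduces to the H\"older regularity \eqref{eq:decay} of $\mu$ combined with Fubini's theorem.

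First I would record the pointwise bound
$$\Big|\sum_{w \in \cW_t} p_w e^{-2\pi i \xi(f_w(x)-f_w(y))}\Big| \le \sum_{w \in \cW_t} p_w = 1,$$
valid for all $x,y,\xi \in \R$, since each exponential has modulus $1$ and the weights $p_w > 0$ sum to $1$. Integrating this estimate over $A_\delta$ gives
$$\Big|\iint_{A_\delta} \sum_{w \in \cW_t} p_w e^{-2\pi i \xi(f_w(x)-f_w(y))}\, d\mu(x)\, d\mu(y)\Big| \le (\mu\times\mu)(A_\delta),$$
so it remains to show $(\mu\times\mu)(A_\delta) \le C\delta^\alpha$.

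Next I would apply Fubini's theorem to write $(\mu\times\mu)(A_\delta) = \int \mu(\{x : |x-y| \le \delta\})\, d\mu(y) = \int \mu(\overline{B}(y,\delta))\, d\mu(y)$. Since $\mu$ is a probability measure supported on $F$ and $0 < \delta < r_0$, the decay estimate \eqref{eq:decay} applied at the $\mu$-a.e.\ point $y \in F$ (to the open balls $B(y,\delta+\e)$, then letting $\e \downarrow 0$, which is permissible because $\delta < r_0$) yields $\mu(\overline{B}(y,\delta)) \le C\delta^\alpha$. Integrating in $y$ over the probability measure $\mu$ then gives the desired bound $C\delta^\alpha$, after relabelling the constant $C$ if necessary.

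I do not expect a genuine obstacle here: the estimate is essentially free, using only the triangle inequality and the H\"older regularity of $\mu$, and in particular it makes no use of the cancellation in the exponential sum — that cancellation is precisely what must be exploited in the complementary regime $|x-y| > \delta$ treated in Proposition~\ref{prop:far}. The only point requiring a moment's care is the open-versus-closed ball subtlety in invoking \eqref{eq:decay}, which is dispatched by the limiting argument above.
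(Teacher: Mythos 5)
Your proof is correct and follows exactly the paper's argument: bound the exponential sum trivially by $1$, reduce to $(\mu\times\mu)(A_\delta)$, apply Fubini, and invoke the H\"older regularity \eqref{eq:decay}. The extra care about open versus closed balls is a harmless refinement the paper glosses over.
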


\begin{proof} First of all, since for all $t > 0$ we have that
$$\sum_{w \in \cW_t} p_w = 1.$$
Thus we can bound using the triangle inequality as follows
$$\Big|\iint_{A_\delta} \sum_{w \in \cW_t} p_w  e^{-2\pi i \xi (f_w(x)-f_w(y))} \, d\mu(x) \, d\mu(y)\Big| \leq (\mu \times \mu) (A_\delta)$$
since $|e^{i\theta}| = 1$ for all $\theta \in \R$. Using Fubini's theorem we see that
\begin{align}\label{eq:fubini}(\mu \times \mu) (A_\delta) = \int \mu(B(x,\delta)) \, d\mu(x),\end{align}
thus by \eqref{eq:decay} the right-hand side is bounded by a constant multiple of $\delta^\alpha$.
\end{proof}

\subsection{Application of the renewal theorem and high-oscillations}

In the case when $x,y \in \R$ are chosen such that $|x-y| > \delta$, we will use the renewal theory to prove the following convergence:

\begin{prop}\label{prop:far} Suppose $\log r_j / \log r_\ell$ is irrational for some $j \neq \ell$. If $\xi = s e^t$ and $\delta = |s|^{-1/2}$, then
$$\lim_{|s|\to\infty}\lim_{t \to \infty}  \iint_{\R^2 \setminus A_\delta} \sum_{w \in \cW_t} p_w  e^{-2\pi i \xi (f_w(x)-f_w(y))} \, d\mu(x) \, d\mu(y) = 0.$$
\end{prop}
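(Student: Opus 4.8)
The plan is to fix $x,y$ with $|x-y| > \delta$ and analyze the inner sum $\sum_{w \in \cW_t} p_w e^{-2\pi i \xi(f_w(x) - f_w(y))}$ as a renewal average, then integrate in $(x,y)$. The key observation is that $f_w(x) - f_w(y) = r_w(x-y)$, since $f_w$ is a similitude with linear part $r_w$; hence the exponential becomes $e^{-2\pi i \xi r_w (x-y)}$. Writing $\xi = s e^t$ and recalling from Section \ref{sec:reduction} that $r_w \in [c e^{-t}, e^{-t}]$ for $w \in \cW_t$, we substitute $r_w = e^{-(-\log r_w)} = e^{-t} e^{-(-\log r_w - t)}$, so that
\[
\xi r_w (x-y) = s (x-y) e^{-(-\log r_w - t)}.
\]
Thus if I set $u := -\log r_w - t$ (the residual variable of the stopping time, which by construction lies in $(-1, |\supp\lambda|+1)$ roughly, i.e.\ in the window where the local $C^1$ norm is defined), the inner sum is exactly $\E\big(g_{s,x,y}(S_{n_t} - t)\big)$ where
\[
g_{s,x,y}(u) := e^{-2\pi i s (x-y) e^{-u}}.
\]
Here I use the identification \eqref{equ:et} of $\sum_{w \in \cW_t} p_w(\cdots)$ with $\E((\cdots)(S_{n_t}-t))$. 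Note $g_{s,x,y}$ is a genuine $C^1$ function of $u$ (complex-valued, but Proposition \ref{prop:renewal} applies to real and imaginary parts separately, or directly by linearity), with $\|g_{s,x,y}\|_{C^1} \lesssim 1 + |s||x-y| e^{|u|} \lesssim 1 + |s|$ on the relevant window, since $|x-y| \le \diam F \lesssim 1$ and $e^{|u|}$ is bounded by a constant depending only on $\supp\lambda$.

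Next I apply the renewal theorem, Proposition \ref{prop:renewal}, whose non-lattice hypothesis holds because $\log r_j / \log r_\ell$ is irrational for some $j \neq \ell$, making $\lambda = \sum_j p_j \delta_{-\log r_j}$ non-lattice (its support generates a dense subgroup of $\R$). This gives, for each fixed $s$ and each fixed $(x,y)$,
\[
\E\big(g_{s,x,y}(S_{n_t}-t)\big) = \frac{1}{\sigma}\int_{\R^+} g_{s,x,y}(u)\, p(u)\, du + o_t(1)\cdot\|g_{s,x,y}\|_{C^1},
\]
where the $o_t(1)$ term is controlled as $t \to \infty$ uniformly in $(x,y)$ (the dependence on $s$ enters only through the multiplicative $\|g_{s,x,y}\|_{C^1} \lesssim 1+|s|$ factor, and $s$ is held fixed when $t \to \infty$). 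Integrating in $(x,y)$ over $\R^2 \setminus A_\delta \subset (\spt\mu)^2$ against $d\mu\,d\mu$ and using dominated convergence (the integrand is bounded by $1$ in modulus, so $\mu\times\mu$ supplies the dominating function), the $o_t$ term vanishes as $t \to \infty$ for fixed $s$, and we are left with
\[
\lim_{t\to\infty} \iint_{\R^2 \setminus A_\delta} \cdots = \frac{1}{\sigma} \iint_{\R^2 \setminus A_\delta} \int_{\R^+} e^{-2\pi i s(x-y)e^{-u}} p(u)\, du\, d\mu(x)\, d\mu(y).
\]

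Finally I must show this last quantity tends to $0$ as $|s| \to \infty$. The inner $u$-integral $\int_{\R^+} e^{-2\pi i s(x-y)e^{-u}} p(u)\, du$ is, after the change of variables $v = e^{-u}$ (so $dv = -e^{-u}du$, i.e.\ $du = -dv/v$), an oscillatory integral $\int e^{-2\pi i s(x-y) v} \tilde{p}(v)\, dv$ over a bounded $v$-range, where $\tilde p(v) = p(-\log v)/v$ is a bounded function of bounded variation supported away from $0$ (since $p$ is piecewise constant with compact support in $\R^+$ and vanishes past $\supp\lambda$). This is a Fourier transform of a fixed $BV$ function evaluated at frequency $s(x-y)$, hence it is $O(1/(|s||x-y|))$ by integration by parts (van der Corput / Riemann–Lebesgue with a rate). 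On $\R^2 \setminus A_\delta$ we have $|x-y| > \delta = |s|^{-1/2}$, so the inner integral is $O(1/(|s|\cdot|s|^{-1/2})) = O(|s|^{-1/2})$, uniformly in $(x,y)$ outside the tube. Integrating against the probability measure $\mu\times\mu$ costs nothing, so the whole expression is $O(|s|^{-1/2}) \to 0$ as $|s| \to \infty$, completing the proof.

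\textbf{Main obstacle.} The delicate point is the uniformity of the $o_t(1)$ error in Proposition \ref{prop:renewal} across all $(x,y)$ simultaneously: the function $g_{s,x,y}$ varies with $(x,y)$, so one needs the renewal convergence to be uniform over the compact family $\{g_{s,x,y} : (x,y) \in (\spt\mu)^2\}$, which is equibounded in $C^1$ for fixed $s$. This is exactly why the renewal theorem is stated with an error of the form $o_t\|g\|_{C^1}$ rather than merely pointwise convergence for each fixed $g$ — the $C^1$-norm control makes the convergence uniform on bounded families, and then dominated convergence after integrating in $(x,y)$ finishes the job. A secondary technical care is verifying that the residual $S_{n_t} - t$ genuinely lands in the window $(-1, |\supp\lambda|+1)$ where the local $C^1$ norm \eqref{eq:localC1} is defined, which follows from $t > |\supp\lambda| + 1$ and the definition of $n_t$ (one overshoots by at most one step, whose size is at most $|\supp\lambda|$).
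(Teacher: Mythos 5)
Your argument is correct and follows essentially the same route as the paper: the identity $f_w(x)-f_w(y)=r_w(x-y)$, the reduction to $\E(g_{s(x-y)}(S_{n_t}-t))$ via \eqref{equ:et}, the application of Proposition \ref{prop:renewal} under the non-lattice condition, and the observation that the error $o_t\|g_{s(x-y)}\|_{C^1}$ is uniform over $(\supp\mu)^2\setminus A_\delta$ because $|s(x-y)|\le C|s|$ there. The only (welcome) deviation is in the last step: where the paper invokes the Riemann--Lebesgue lemma pointwise, you prove the quantitative bound $O(1/(|s||x-y|))=O(|s|^{-1/2})$ for the oscillatory integral against $p$, which is uniform on the $s$-dependent domain $\R^2\setminus A_\delta$ and is exactly the estimate the paper itself uses later in the quantitative Proposition \ref{prop:farquant}.
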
	

The proof of Proposition \ref{prop:far} follows from the renewal theorem for stopping time of random walks (Proposition \ref{prop:renewal}). The rate in Proposition \ref{prop:far} is not quantitative. In the later section, by adding an extra assumption ($\log r_j / \log r_\ell$ is diophantine) to the renewal theory, we can apply the quantitative version (Proposition \ref{prop:farquant}). 

\begin{proof}[Proof of Proposition \ref{prop:far}]
By definition of $f_w$ we have that for all $x,y \in [0,1]$ and $w \in \cW_t$ the difference
$$f_w(x)-f_w(y) = r_w (x-y).$$
Therefore we can write
$$e^{-2\pi i \xi (f_w(x)-f_w(y))} = e^{-2\pi i \xi (x-y) r_w}$$
Recall that we have fixed $s \in \R$ and $t > 0$ such that $\xi$ has the form $\xi = se^{t}$. With this $s \in \R$, we can define a smooth function $g_{s} : \R \to \C$ by
$$g_s(r) := \exp(-2\pi i s e^{-r}), \quad r \in \R.$$
Then the local $C^1$ norm of $g_s$ satisfies $\|g_s\|_{C^1}=O(|s|)$, as $|s| \to \infty$, recall \eqref{eq:localC1} for the definition of local $C^1$ norm. Using $g_s$ and \eqref{equ:et} we can write for any pair $x,y \in \R$ that
\begin{equation}\label{eq:identityren}\sum_{w \in \cW_t}  p_w e^{-2\pi i \xi (f_w(x)-f_w(y))} = \E(g_{s(x-y)}(S_{n_t} - t)).\end{equation}

Due to the irrationality of $\log r_j / \log r_\ell$ for some $j \neq \ell$, the additive subgroup generated by $-\log r_j$, $j \in \cA$, is dense in $\R$. So $\lambda$ is non-lattice and we can apply Proposition \ref{prop:renewal}. 
We apply Proposition \ref{prop:renewal} with the function $g = g_{s(x-y)}$, which gives us for $\sigma = \int x \, d\lambda(x)$ that
$$\lim_{t \to \infty} \E(g_{s(x-y)}(S_{n_t} - t)) = \frac{1}{\sigma} \int_\R g_{s(x-y)}(r) p(r) \, dr.$$
If we now look at the right-hand side, since $p(r)$ is a piecewise continuous function, or just integrable, Riemann-Lebesgue lemma implies that for all $x,y \in \R^2 \setminus A_\delta$ we have
\begin{align}\label{eq:lim}\lim_{|s| \to \infty} \int_\R g_{s(x-y)}(r) p(r) \, dr = 0.\end{align}

However, to be able to use the above convergence \eqref{eq:lim}, we need uniformity in terms of $x$ and $y$ in this convergence and to make it more effective using the error term in the renewal theorem Proposition \ref{prop:renewal}. 

Recall that in \eqref{eq:delta} we defined $\delta > 0$ depending on $s \in \R$ as $\delta = |s|^{-1/2}$ and $A_\delta = \{(x,y) \in \R^2 : |x-y| > \delta\}$. Thus if $s \in \R$ and $(x,y) \in (\supp \mu)^2 \setminus A_\delta$, we have
\begin{equation}\label{equ:sxy}
|s(x-y)|\in[|s|^{1/2},C|s|],
\end{equation}
where $C > 0$ is the diameter of $\supp \mu$. 

Therefore, by \eqref{eq:identityren} and applying Proposition \ref{prop:renewal}
\begin{align*}
	&\lim_{t \to \infty}  \iint_{\R^2 \setminus A_\delta} \sum_{w \in \cW_t} p_w  e^{-2\pi i \xi (f_w(x)-f_w(y))} \, d\mu(x) \, d\mu(y) \\
	=\,&\lim_{t \to \infty}  \iint_{\R^2 \setminus A_\delta} \E(g_{s(x-y)}(S_{n_t}-t))\, d\mu(x) \, d\mu(y)\\
	=\, &\iint_{\R^2 \setminus A_\delta} \int_\R g_{s(x-y)}(r)p(r)\, dr\, d\mu(x) \, d\mu(y) , 
\end{align*}
where we can take the limit because the error term $$o_t\|g_{s(x-y)}\|_{C_1}\leq o_t\sup_{s_0\in [|s|^{1/2},C|s|]}\|g_{s_0}\|_{C_1}$$
is uniform for $(x,y)\in(\supp \mu)^2\backslash A_\delta$ by \eqref{equ:sxy}. The proof is complete by \eqref{eq:lim}.



\end{proof}

Proposition \ref{prop:far} together with Proposition \ref{prop:nearby} and Lemma \ref{lma:estimatefourier} completes the proof of Theorem \ref{thm:main} as follows:

\begin{proof}[Proof of Theorem \ref{thm:main}] By Proposition \ref{prop:nearby}, we have for all large enough $|s|$ with $\delta = |s|^{-1/2} < r_0$ and for all $t > 0$ that:
$$\Big| \iint_{A_\delta} \sum_{w \in \cW_t} p_w  e^{-2\pi i \xi (f_w(x)-f_w(y))} \, d\mu(x) \, d\mu(y)\Big| \leq C|s|^{-\alpha/2},$$
where $\xi = s e^t$, $\alpha > 0$ and $C > 0$ is a universal constant. Thus
$$\limsup_{t\to\infty}\Big| \iint_{A_\delta} \sum_{w \in \cW_t} p_w  e^{-2\pi i \xi (f_w(x)-f_w(y))} \, d\mu(x) \, d\mu(y)\Big| \leq C|s|^{-\alpha/2}.$$
On the other hand, Proposition \ref{prop:far} implies that we have
$$\lim_{|s|\to\infty}\lim_{t\to\infty}\Big| \iint_{\R^2 \setminus A_\delta} \sum_{w \in \cW_t} p_w  e^{-2\pi i \xi (f_w(x)-f_w(y))} \, d\mu(x) \, d\mu(y)\Big|= 0,$$
Hence Lemma \ref{lma:estimatefourier} gives
$$\lim_{|s| \to \infty}\limsup_{t\to\infty} |\widehat{\mu}(se^t)|^2 = 0,$$
which gives the claim.
\end{proof}

\subsection{Quantitative rate for Fourier decay}

In order to prove a quantitative rate (Theorem \ref{thm:mainquantitative}), we need a rate in Proposition \ref{prop:far}. For this purpose we will employ Proposition \ref{prop:renewalquantitative}. So we need to verify weakly diophantine condition for $\lambda$, which will follow from the diophantine assumption in Theorem \ref{thm:mainquantitative}. From now on, we use a notation $A \ll B$ to mean there exists a universal constant $C > 0$ such that $A \leq C B$, and for $A \gg B$ that $A \geq C B$ respectively.

\begin{lemma}\label{lma:dioph}If there exist $r_j, r_k$ for $j,k\in\AA$ such that $\log r_j/\log r_k$ is diophantine, then the measure $\lambda$ is weakly diophantine.
\end{lemma}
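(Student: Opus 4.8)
The plan is to reduce the weakly diophantine condition for $\lambda = \sum_{j \in \cA} p_j \delta_{-\log r_j}$ to a lower bound on $|1 - \cL\lambda(ib)|$ as $|b| \to \infty$, and then to extract such a bound from the diophantine property of $a := \log r_j / \log r_k$. First I would write out $\cL\lambda(ib) = \sum_{\ell \in \cA} p_\ell e^{-ib(-\log r_\ell)} = \sum_\ell p_\ell e^{ib \log r_\ell}$, so that
\begin{equation}
1 - \cL\lambda(ib) = \sum_{\ell \in \cA} p_\ell \bigl(1 - e^{ib\log r_\ell}\bigr).
\end{equation}
Taking real parts and using $\Rea(1 - e^{i\theta}) = 1 - \cos\theta = 2\sin^2(\theta/2) \geq 0$, every term is nonnegative, so
\begin{equation}
|1 - \cL\lambda(ib)| \geq \Rea\bigl(1 - \cL\lambda(ib)\bigr) = \sum_{\ell \in \cA} p_\ell\bigl(1 - \cos(b \log r_\ell)\bigr) \geq p_j\bigl(1 - \cos(b\log r_j)\bigr) + p_k\bigl(1 - \cos(b\log r_k)\bigr).
\end{equation}
Thus it suffices to show that $\min\{ \|b\log r_j/(2\pi)\|, \|b\log r_k/(2\pi)\| \}$ (distance to nearest integer) cannot be simultaneously too small, at a polynomial rate in $|b|$; then $1-\cos$ of at least one of the two angles is $\gg |b|^{-2l}$ or so, giving the $\liminf |b|^{l'}|1-\cL\lambda(ib)| > 0$ for a suitable $l'$.

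The key step is the simultaneous-smallness obstruction. Suppose both $b\log r_j = 2\pi m + \eta_1$ and $b\log r_k = 2\pi n + \eta_2$ with $|\eta_1|,|\eta_2|$ small and $m,n \in \Z$. Dividing, $\log r_j/\log r_k = (2\pi m + \eta_1)/(2\pi n + \eta_2)$, so $a$ is close to the rational $m/n$: quantitatively,
\begin{equation}
\Bigl| a - \frac{m}{n} \Bigr| = \Bigl| \frac{2\pi m + \eta_1}{2\pi n + \eta_2} - \frac{m}{n}\Bigr| = \Bigl|\frac{n\eta_1 - m\eta_2}{n(2\pi n + \eta_2)}\Bigr| \ll \frac{|n\eta_1| + |m\eta_2|}{n^2}.
\end{equation}
Since $|m|, |n| \ll |b|$ (as $\log r_j, \log r_k$ are fixed nonzero constants), this is $\ll (|\eta_1| + |\eta_2|)/|n|$, while the diophantine hypothesis \eqref{eq:diophantine} forces $|a - m/n| \geq c/n^l \gg |b|^{-l}$. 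Combining, $|\eta_1| + |\eta_2| \gg |b|^{1-l}$, i.e. at least one of the angular distances to a multiple of $2\pi$ is $\gg |b|^{1-l}$, hence $1 - \cos$ of that angle is $\gg |b|^{2(1-l)}$. Plugging into the displayed lower bound gives $|1 - \cL\lambda(ib)| \gg |b|^{-2(l-1)}$, so $\lambda$ is $l'$-weakly diophantine with $l' = 2(l-1)$ (or any larger exponent; the precise constant is immaterial since Proposition \ref{prop:renewalquantitative} only needs \emph{some} $l' > 0$).

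I expect the main obstacle to be bookkeeping of edge cases rather than anything deep: one must handle the range of $|b|$ where $n = 0$ (then $b\log r_k$ near $0$ means $|b|$ bounded, excluded by the $\liminf$), ensure the implied constants depend only on the fixed data $r_j, r_k, p_j, p_k$, and confirm that taking real parts does not lose the bound (it does not, since every summand in $\Rea(1-\cL\lambda(ib))$ is nonnegative, so dropping all but the $j$ and $k$ terms is legitimate). A minor subtlety is that $2\pi$ appears from converting between $e^{i\theta}$ periodicity and the integer approximation in \eqref{eq:diophantine}, so I would absorb it into constants from the start. One should also note $a$ could a priori be negative or have $|a|<1$; replacing $a$ by $m/n \mapsto n/m$ or by $\pm a$ as needed is harmless since the diophantine property is symmetric under these operations up to constants. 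Modulo these routine checks, the argument is the three displayed inequalities above chained together.
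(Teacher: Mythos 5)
Your proposal is correct and follows essentially the same route as the paper: bound $|1-\cL\lambda(ib)|$ below by the (nonnegative) real part restricted to the terms $j,k$, reduce to the distances of $b\log r_j$ and $b\log r_k$ from $2\pi\Z$, and rule out simultaneous smallness via the diophantine property of $\log r_j/\log r_k$. Your write-up is in fact slightly more explicit than the paper's in the last step (the paper simply rescales by $b_1=b\log r_j/2\pi$ and invokes the diophantine definition), and the exponent $2(l-1)$ you obtain is fine since only some positive exponent is needed.
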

\begin{proof}
 Indeed, we have
  \begin{align*}
  |1-\cL\lambda(ib)| &\geq |\Rea(p_j(1-e^{-ib\log r_j})+p_k(1-e^{-ib\log r_k}))|\\
  &\gg \max\{d(b\log r_j ,2\pi\Z)^2, d(b\log r_k,2\pi \Z)^2 \}\\
  & \gg \max\Big\{d(b_1,\Z)^2,d\Big(b_1\frac{\log r_k}{\log r_j},\Z\Big)^2 \Big\},
  \end{align*}
  with $b_1=b\log r_j/2\pi$. By the definition \eqref{eq:diophantine} of a diophantine number, we obtain that for some $l>0$
  $$\max\Big\{d(b_1,\Z)^2,d\Big(b_1\frac{\log r_k}{\log r_j},\Z\Big)^2\Big\}\gg |b_1|^{-2l},$$
  which implies $\lambda$ is weakly diophantine.
  \end{proof}

Now we can prove the following quantitative version of Proposition \ref{prop:far}, which implies Theorem \ref{thm:mainquantitative}:

\begin{prop}\label{prop:farquant} Suppose $\log r_j / \log r_\ell$ is diophantine for some $j \neq \ell$. Then there exists $\beta>0$ such that
$$\Big|\iint_{\R^2 \setminus A_\delta} \sum_{w \in \cW_t} p_w  e^{-2\pi i \xi (f_w(x)-f_w(y))} \, d\mu(x) \, d\mu(y)\Big| = O\Big(\frac{1}{|\log |\xi||^{\beta/4}}\Big),$$
as $|\xi| \to \infty$, where $t = t(\xi)$ satisfies $|\xi|=t^{\beta/2}e^t$.
\end{prop}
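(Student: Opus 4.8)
The plan is to mirror the proof of Proposition \ref{prop:far}, but replace each qualitative limit by its quantitative counterpart and then optimise the free parameters. First I would rewrite, using \eqref{eq:identityren}, the double integral as $\iint_{\R^2\setminus A_\delta}\E(g_{s(x-y)}(S_{n_t}-t))\,d\mu(x)\,d\mu(y)$, where $\xi=se^t$ and $\delta=|s|^{-1/2}$. Since $\log r_j/\log r_\ell$ is diophantine for some $j\neq\ell$, Lemma \ref{lma:dioph} tells us $\lambda$ is $l$-weakly diophantine for some $l>0$, so Proposition \ref{prop:renewalquantitative} applies and gives
\[
\E(g_{s(x-y)}(S_{n_t}-t))=\frac{1}{\sigma}\int_{\R^+}g_{s(x-y)}(r)p(r)\,dr+O\big(t^{-1/(4l+1)}\big)\|g_{s(x-y)}\|_{C^1}.
\]
On the tube complement $(\supp\mu)^2\setminus A_\delta$ we have $|s(x-y)|\in[|s|^{1/2},C|s|]$ by \eqref{equ:sxy}, hence $\|g_{s(x-y)}\|_{C^1}=O(|s|)$ uniformly, so the error term contributes $O(|s|\,t^{-1/(4l+1)})$ after integrating against $\mu\times\mu$.

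Next I would estimate the main term $\frac{1}{\sigma}\int_{\R^+}g_{s_0}(r)p(r)\,dr$ with $s_0=s(x-y)$, $|s_0|\in[|s|^{1/2},C|s|]$. Here $p(r)=\lambda((r,\infty))$ is piecewise constant, supported on a bounded interval, and $g_{s_0}(r)=\exp(-2\pi i s_0 e^{-r})$ oscillates. The cleanest route is the change of variables $u=e^{-r}$, turning the integral into $\int \exp(-2\pi i s_0 u)\,\tilde p(u)\,\frac{du}{u}$ over a bounded interval bounded away from $0$, where $\tilde p$ is piecewise constant; a piecewise-constant (hence bounded-variation) integrand against $e^{-2\pi i s_0 u}$ decays like $O(1/|s_0|)$ by integration by parts / stationary-phase for BV functions, giving $O(|s|^{-1/2})$ uniformly in $x,y$ with $|x-y|>\delta$. (Even the crude Riemann–Lebesgue-with-modulus-of-continuity bound for step functions suffices, since the number of jumps of $p$ is fixed.) Combining, the whole double integral is $O(|s|^{-1/2}+|s|\,t^{-1/(4l+1)})$.

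Finally I would balance the two terms by choosing the dependence of $t$ on $\xi$. Setting the two error contributions to be comparable forces roughly $|s|^{3/2}\asymp t^{1/(4l+1)}$, i.e.\ $|s|\asymp t^{2/(3(4l+1))}$; writing $\beta:=\tfrac{4}{3(4l+1)}$ (a harmless normalisation to match the exponent $\beta/4$ in the statement), we get $|s|\asymp t^{\beta/2}$, and then $\xi=se^t$ satisfies $|\xi|\asymp t^{\beta/2}e^t$, so $t\sim\log|\xi|$ and $|s|\sim(\log|\xi|)^{\beta/2}$; feeding this back, both error terms are $O(|s|^{-1/2})=O((\log|\xi|)^{-\beta/4})$, which is the claimed bound. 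The main obstacle I anticipate is not any single estimate but the bookkeeping: one must check that the error term in Proposition \ref{prop:renewalquantitative} is genuinely uniform over the relevant range of $s_0=s(x-y)$ (this is where the restriction to $\R^2\setminus A_\delta$ and the bound \eqref{equ:sxy} are essential, to keep $\|g_{s_0}\|_{C^1}$ controlled and to keep $|s_0|$ bounded below so the oscillatory integral really decays), and that the parameter $l$ from the weakly diophantine condition propagates correctly into the final exponent $\beta$ without circular dependence on $t$. Once the parameters are pinned down as above, Proposition \ref{prop:farquant} — and with it Theorem \ref{thm:mainquantitative} via Lemma \ref{lma:estimatefourier} and Proposition \ref{prop:nearby} — follows.
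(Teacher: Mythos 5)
Your proposal is correct and follows essentially the same route as the paper's proof: the identity \eqref{eq:identityren}, Lemma \ref{lma:dioph} plus Proposition \ref{prop:renewalquantitative} for the error term, the $O(1/|s(x-y)|)$ oscillatory decay of the main term against the piecewise-constant $p$, and the choice $|s|=t^{\beta/2}$ with $t\sim\log|\xi|$. The only (harmless) difference is that you balance the two error contributions to get $\beta=\tfrac{4}{3(4l+1)}$, whereas the paper simply takes $\beta=\tfrac{1}{4l+1}$ and checks both terms are $O(t^{-\beta/4})$; both yield the claimed bound.
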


\begin{proof} Since $\lambda$ is weakly diophantine by Lemma \ref{lma:dioph}, we can apply Proposition \ref{prop:renewalquantitative} to obtain for some $\beta > 0$ that for $\sigma = \int x \, d\lambda(x)$:
$$\Big|\E(g_{s(x-y)}(S_{n_t} - t)) - \frac{1}{\sigma}\int_\R g_{s(x-y)}(r) p(r) \, dr\Big| = O\Big(\frac{|s(x-y)|}{t^\beta}\Big).$$
Because the function $p(r)=\int_{x>r}d\lambda(x)$, $r\geq 0$, is piecewise constant with a finite number of points of discontinuity, the decay rate in the main term, the oscillation integral, is given by the oscillation (See \cite[Lemma 3.8]{Li1} for more details)
$$\Big|\int_{\R} g_{s(x-y)}(r) p(r)\, dr \Big| =O\Big(\frac{1}{|s(x-y)|}\Big) .$$
Then we take $|s| = t^{\beta/2}$, which implies $|s(x-y)|\in [t^{\beta/4},Ct^{\beta/2}]$ for $(x,y)\in (\supp \mu)^2 \backslash A_\delta$ for $C$ equal to the diameter of the support of $\mu$. When $|\xi|=t^{\beta/2}e^t$, we have, after taking logarithms that the rate is given by $O(|\log|\xi||^{-\beta/4})$, which gives the claim.
\end{proof}

\section{Proofs of the renewal theorems}
\label{secrentheory}

\def\linf{\infty}
\def\Res{E}
\def\Cut{E_C}
\def\lf{{C^1_x}}
\def\ck{{Lip}}
\def\Lip{{\infty}}
\def\pexp{B_s}
\def\pexpo{B}
\def\cal{\mathcal}
\newcommand{\dd}{\, d}


In this section we will prove Proposition \ref{prop:renewal} and \ref{prop:renewalquantitative}. They follow the similar proofs for the stationary measure for Lie group actions in \cite{Li1}, but here we will give a full self-contained proof in order to present the method in the most basic setting. Let us now give a brief overview of the proofs presented here. First of all, we will give a proof of key renewal theorem for good functions using Laplace transform in Proposition \ref{prop:renreg}, with a control of error term. Then in order to find the limit of the distribution of the process $S_{n_t}-t$, we will consider the joint distribution of $(X_{n+1},S_n-t)$ and prove a renewal theorem for this joint distribution in Proposition \ref{prop:residue} using Proposition \ref{prop:renreg}. Then we add the cutoff assumption that $S_n-t<0$ and $X_{n+1}+(S_n-t)\geq 0$. In Proposition \ref{prop:rescut} we prove a renewal theorem for the joint distribution of $(X_{n+1},S_n-t)$ under this cutoff assumption. Finally, the reason we are doing this is that the distribution of $S_{n_t}-t$ is exactly the sum over natural numbers of the distribution of the sum $X_{n+1}+(S_n-t)$ under the cutoff assumption of $(X_{n+1},S_n-t)$.

In the following sections we actually only need that $\lambda$ is supported on $\R^+$, non-lattice, and has an \textit{exponential moment}: there exists $\eps > 0$ such that
$$\int e^{\eps x}\dd\lambda(x) < \infty.$$
The exponential moment assumption is more general than the finite support hypothesis of $\lambda$ in Section \ref{sec:renewal}. However, in the proofs of Propositions \ref{prop:renewal} and \ref{prop:renewalquantitative} we will impose that $\lambda$ has a finite support, see Section \ref{sec:proofrenewal} for their proofs. With a bit more effort, Proposition \ref{prop:renewal} and \ref{prop:renewalquantitative} can also be obtained with exponential moment, but we don't investigate this generality here. 

\subsection{Laplace transform}
The Laplace transform of a probability measure $\lambda$ with finite exponential moment on $\R$ is defined by
\begin{align*}
\cL\lambda(z)=\int e^{-z x}\dd\lambda(x),
\end{align*} 
for $\Re z>-\eps$, where $\eps>0$ is the constant in the definition of exponential moment of $\lambda$.

Recall the expectation $\sigma=\int x\dd\lambda(x)>0$. By the definition of non-lattice and $\lambda$ having exponential moment, we have

\begin{prop}\label{prop:invtran}
	If $\lambda$ is non-lattice, then for any pure imaginary number $i\xi$ not $0$, the Laplace transform of $\lambda$ is not equal to $1$ and
	\begin{equation}\label{equ:i-pz}
	u(z):=\frac{1}{1- \cL\lambda(z)}-\frac{1}{\sigma z}
	\end{equation}
	is holomorphic on a neighbourhood of the half plane $\{z\in\C,\ \Re z\geq 0 \}$.
\end{prop}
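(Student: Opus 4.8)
The plan is to establish the two claims of Proposition~\ref{prop:invtran} in turn: first that $\cL\lambda(i\xi)\neq 1$ for every real $\xi\neq 0$, and then that the combination $u(z)$ in \eqref{equ:i-pz}, which a priori has potential singularities at $z=0$ and at any zero of $1-\cL\lambda(z)$, is in fact holomorphic on a neighbourhood of the closed right half-plane $\{\Re z\geq 0\}$.

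For the first claim I would argue as follows. Suppose $\cL\lambda(i\xi)=1$ for some real $\xi\neq 0$. Writing this out, $\int e^{-i\xi x}\,d\lambda(x)=1$; taking real parts gives $\int (1-\cos(\xi x))\,d\lambda(x)=0$. Since $1-\cos(\xi x)\geq 0$ everywhere and $\lambda$ is a probability measure, the integrand must vanish $\lambda$-a.e., i.e.\ $\xi x\in 2\pi\Z$ for $\lambda$-a.e.\ $x$, hence for all $x\in\supp\lambda$. This forces $\supp\lambda\subset \frac{2\pi}{\xi}\Z$, so the additive group generated by $\supp\lambda$ is contained in $\frac{2\pi}{\xi}\Z$, which is a discrete (hence non-dense) subgroup of $\R$. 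This contradicts the non-lattice hypothesis. So $\cL\lambda(i\xi)\neq 1$ for all real $\xi\neq 0$.

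For the second claim, observe that since $\lambda$ has an exponential moment, $\cL\lambda$ is holomorphic on the strip $\{\Re z>-\eps\}$, so $1-\cL\lambda$ is holomorphic there and its zero set has no accumulation point in that strip. I would first check that $1-\cL\lambda(z)\neq 0$ for $\Re z\geq 0$ except at $z=0$: for $\Re z\geq 0$ and $z\neq 0$ write $z=a+ib$ with $a\geq 0$; then $|\cL\lambda(z)|\leq \int e^{-ax}\,d\lambda(x)\leq 1$ since $x>0$ and $a\geq 0$, with equality in the first inequality only if $a=0$, and the case $a=0$, $b\neq 0$ is excluded by the first claim, while $a>0$ (together with $\lambda$ not being a point mass at $0$, which holds since $\sigma>0$ and $\lambda$ is supported on $\R^+$) gives $|\cL\lambda(z)|<1$, so $1-\cL\lambda(z)\neq 0$. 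Thus on $\{\Re z\geq 0\}$ the only zero of $1-\cL\lambda$ is $z=0$. Near $z=0$, Taylor expansion gives $\cL\lambda(z)=1-\sigma z+O(z^2)$ (using that $\lambda$ has moments of all orders from the exponential moment, in particular $\cL\lambda'(0)=-\sigma$), so $1-\cL\lambda(z)=\sigma z(1+O(z))$ is a simple zero at $0$ with the same leading coefficient $\sigma$ as the pole of $\frac{1}{\sigma z}$. Hence the principal parts of $\frac{1}{1-\cL\lambda(z)}$ and $\frac{1}{\sigma z}$ at $z=0$ cancel, and $u(z)$ extends holomorphically across $z=0$. Since there are no other zeros of $1-\cL\lambda$ on $\{\Re z\geq 0\}$ and the zero set is closed and discrete, one can find a neighbourhood of $\{\Re z\geq 0\}$ (a slightly larger open set, staying inside $\{\Re z>-\eps\}$ and avoiding the other zeros of $1-\cL\lambda$) on which $u$ is holomorphic.

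The main obstacle here is a soft one rather than a computational one: it is the passage from ``$u$ is holomorphic at each point of the closed half-plane'' to ``$u$ is holomorphic on an \emph{open neighbourhood} of the closed half-plane.'' One must make sure the other zeros of $1-\cL\lambda$ (which exist in the strip $\{-\eps<\Re z<0\}$ in general) stay bounded away from the imaginary axis, or at least that they do not accumulate on it --- this follows because the zero set of the nonzero holomorphic function $1-\cL\lambda$ is discrete in $\{\Re z>-\eps\}$, but one should phrase it carefully (e.g.\ cover the imaginary axis by the region where $|\cL\lambda|$ is bounded away from $1$ together with a small disc around $0$, and use that $\cL\lambda(z)\to$ (the mean $0$ part) as... ) --- in any case the exponential moment is exactly what makes this argument go through on a genuine neighbourhood rather than just on the closed half-plane itself.
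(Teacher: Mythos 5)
Your proof is correct; the paper states Proposition \ref{prop:invtran} without proof as a classical fact, and your argument (real parts forcing $\supp\lambda$ into a lattice, $|\cL\lambda(z)|<1$ for $\Re z>0$ using $\supp\lambda\subset\R^+$, and cancellation of the simple pole at $0$ via $\cL\lambda(z)=1-\sigma z+O(z^2)$) is exactly the standard route the authors have in mind. Your closing worry about zeros of $1-\cL\lambda$ approaching the imaginary axis is not actually an obstacle: the zero set is closed and discrete in the open strip $\{\Re z>-\eps\}$, which contains the imaginary axis, so no zero can accumulate at a point of the closed half-plane, and $\{\Re z>-\eps\}\setminus(Z\setminus\{0\})$ is already an open neighbourhood of $\{\Re z\geq 0\}$ on which $u$ is holomorphic --- a neighbourhood need not be a uniform strip, so no quantitative lower bound on $|1-\cL\lambda|$ near the axis is required here.
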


\subsection{Key Renewal theory for good functions}
We start to compute the renewal operator. In this section we will prove a result for the renewal operator for ``good" functions. Let us first fix some notations. Let $f$ be a Borel function on $\R$. Let $\|f\|_{\infty} :=\sup_{x\in\R}|f(x)|$ be the supremum norm, $\|f\|_{L^p}$ the $L^p$ norm with respect to Lebesgue measure, and $L^p(\R)$ spaces with respect to Lebesgue measure. Define a Sobolev norm 
$$\|f\|_{W^{1,\infty}}:= \|f\|_{\infty}+\|f'\|_{\infty}.$$ 
In this section, we use the following definition of Fourier transform 
$$\hat f(\xi)=\int e^{-ix\xi}f(x)\dd x$$
for functions $f : \R \to \R$ as opposed to the one in the introduction for Borel measures $\mu$ on $\R$. For a compact set $|K|$ in $\R$, we denote by $|K|$ the supremum of absolute value of elements in $K$, that is
\[|K|:=\sup\{|x| : x \in K \}. \]
Recall that we defined the renewal operator for a non-negative bounded function $f$ on $\R$ by
\begin{align*}
Rf(t) = \sum_{n=0}^{\infty}\E (f(S_n-t)) = \sum_{n=0}^{\infty} \int f(x-t) \, d\lambda^{\ast n}(x), \quad t \in \R.
\end{align*}

\begin{prop}\label{prop:renreg}
	Let $f$ be a non-negative bounded continuous function in $L^1( \R)$ such that its Fourier transform satisfies $\hat f\in W^{1,\infty}(\R)$. Assume that $\supp \hat{f}$ is in a compact interval $K$. Then for all $t>0$, we have
	\[Rf(t)=\frac{1}{\sigma}\int_{-t}^{\infty}f(x)\dd x+ \frac{1}{t}O_K\|\hat f\|_{W^{1,\infty}}, \]
	where $O_K$ satisfies
	$$O_K\leq |K|\sup\{|u(i\xi)|+|\partial_\xi u(i\xi)| : \xi\in K \}.$$
\end{prop}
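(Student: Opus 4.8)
The plan is to express the renewal operator $Rf(t)$ via Laplace/Fourier transforms and then extract the main term by a contour/residue-type argument built on Proposition \ref{prop:invtran}. First I would write
\[
Rf(t)=\sum_{n=0}^\infty \int f(x-t)\dd\lambda^{\ast n}(x),
\]
and, since $\hat f$ is supported in the compact set $K$ and $f\in L^1$, invert the Fourier transform to write $f(x-t)=\tfrac{1}{2\pi}\int_K \hat f(\xi)e^{i(x-t)\xi}\dd\xi$. Interchanging sum and integral (justified because $\lambda$ has an exponential moment, so $\sum_n \cL\lambda^{\ast n}$ converges geometrically on a strip, and $\hat f$ is bounded with compact support) gives
\[
Rf(t)=\frac{1}{2\pi}\int_K \hat f(\xi)e^{-it\xi}\sum_{n=0}^\infty \widehat{\lambda^{\ast n}}(-\xi)\dd\xi
      =\frac{1}{2\pi}\int_K \hat f(\xi)e^{-it\xi}\,\frac{1}{1-\cL\lambda(i\xi)}\dd\xi,
\]
using $\widehat{\lambda^{\ast n}}(-\xi)=\cL\lambda(i\xi)^n$ and summing the geometric series; the subtlety at $\xi=0$ is only apparent because $\hat f$ is smooth there and the singularity of $1/(1-\cL\lambda(i\xi))$ is simple.

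Next I would split $1/(1-\cL\lambda(i\xi)) = \tfrac{1}{\sigma i\xi} + u(i\xi)$ according to \eqref{equ:i-pz}. The $u(i\xi)$ piece contributes
\[
\frac{1}{2\pi}\int_K \hat f(\xi)e^{-it\xi}u(i\xi)\dd\xi,
\]
which by one integration by parts in $\xi$ (legitimate since $u(i\xi)$ is holomorphic, hence $C^1$, on a neighbourhood of $K\subset i\R$ by Proposition \ref{prop:invtran}, and $\hat f\in W^{1,\infty}$ with support in $K$) is $\tfrac{1}{t}O_K\|\hat f\|_{W^{1,\infty}}$ with $O_K$ controlled by $|K|\sup_{\xi\in K}(|u(i\xi)|+|\partial_\xi u(i\xi)|)$ — exactly the claimed error bound. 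The $\tfrac{1}{\sigma i\xi}$ piece should reproduce the main term: $\tfrac{1}{2\pi\sigma}\int \hat f(\xi)\tfrac{e^{-it\xi}}{i\xi}\dd\xi$ is, up to the usual principal-value care at $0$, the Fourier inversion of the step function, giving $\tfrac{1}{\sigma}\int_{-t}^{\infty} f(x)\dd x$. I would make this precise by recognising $\int_{-t}^\infty f$ as $f$ convolved against the Heaviside function $H(x)=\one_{x>-t}$ evaluated suitably, whose distributional Fourier transform is $\tfrac{1}{i\xi}$ plus a delta at $0$; the delta contributes $\tfrac{1}{2\pi\sigma}\hat f(0)\cdot(\text{const})$, which must be reconciled with the $n=0$ term and the precise convention — this bookkeeping at the origin is where I would be most careful.

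The main obstacle I anticipate is precisely this handling of the pole at $\xi=0$: splitting off $\tfrac{1}{\sigma i\xi}$ creates a non-integrable singularity in each piece separately, so the interchange of summation and integration and the identification of the main term must be done with a principal value or a small indentation of the contour, and one must check the two spurious contributions cancel to leave exactly $\tfrac1\sigma\int_{-t}^\infty f$. A clean way around it is to regularise: subtract and add $\hat f(0)$ times a fixed smooth bump, so that $\hat f(\xi)-\hat f(0)\phi(\xi)$ vanishes at $0$ and kills the pole, handle the bump term explicitly by a direct computation, and then the remaining integral is genuinely absolutely convergent and the integration-by-parts bound goes through verbatim. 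Everything else — the geometric summation, the exponential-moment justification for convergence, and the $W^{1,\infty}$ estimate via one integration by parts — is routine once the origin is dealt with.
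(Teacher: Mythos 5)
Your overall strategy coincides with the paper's: Fourier inversion of $f$, geometric summation of $\cL\lambda(i\xi)^n$, the splitting $\frac{1}{1-\cL\lambda(i\xi)}=\frac{1}{\sigma i\xi}+u(i\xi)$ from Proposition \ref{prop:invtran}, and one integration by parts in $\xi$ to produce the $\frac1t O_K\|\hat f\|_{W^{1,\infty}}$ error term (that last step is exactly the paper's Lemma \ref{lem:renres}, and your bound on $O_K$ is right). The genuine gap is the point you flag but do not resolve: the behaviour at $\xi=0$. On the imaginary axis the series $\sum_n\cL\lambda(i\xi)^n$ diverges at $\xi=0$ and is not dominated near $0$ (its sum is $\sim 1/(\sigma i\xi)$), so the interchange of $\sum_n$ with $\int\dd\xi$ cannot be justified by the exponential moment, which only gives $\cL\lambda(z)<1$ for $\Re z>0$. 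Neither of your proposed fixes closes this. The bump regularisation is circular: after subtracting $\hat f(0)\phi$ you must still evaluate $R$ applied to $\hat f(0)\check\phi$, i.e.\ prove the very renewal asymptotics in question for that fixed function (there is no ``direct computation''), and $f-\hat f(0)\check\phi$ need not be non-negative, so even the termwise definition of $R$ on each piece needs separate justification. The principal-value/indentation route is left unexecuted, and it is precisely where the $\pi\delta_0$ ambiguity in the Fourier transform of the Heaviside function, which you rightly worry about, lives.

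The paper's resolution, which is the one missing idea, is to stay off the imaginary axis until the very end: insert a damping factor $e^{-sx}$ with $s>0$ and work with $B_s^nf(t)=\int e^{-sx}f(x+t)\dd\lambda^{*n}(x)$. For $s>0$ one has $\cL\lambda(s)<1$, so the sum over $n$ converges absolutely, Fubini is legitimate, and the split produces $\frac{1}{\sigma(s-i\xi)}+u(s-i\xi)$ with no singularity anywhere; the first piece is identified exactly, with no principal values or deltas, via $\frac{1}{s-i\xi}=\int_0^{\infty}e^{-(s-i\xi)x}\dd x$, giving $\frac1\sigma\int_0^\infty f(x+t)e^{-sx}\dd x$. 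One then lets $s\to0^+$: monotone convergence handles the renewal sum and the main term, while the compact support of $\hat f$ and the holomorphy of $u$ on a neighbourhood of $\{\Re z\ge0\}$ handle the $u$-term. With that device in place, the rest of your outline goes through as written.
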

The proof of Proposition \ref{prop:renreg} follows by combining the following two lemmas. Firstly, we have:

\begin{lemma}\label{lem:rencon}
	Under the same assumption as in Proposition \ref{prop:renreg}, we have
	\begin{align*}
	Rf(t)=\frac{1}{\sigma}\int_{-t}^{\infty}f(x)\dd x+\frac{1}{2\pi}\int e^{-it\xi}u(-i\xi)\hat{f}(\xi)\dd\xi.
	\end{align*}
\end{lemma}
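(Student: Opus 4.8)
The plan is to damp the geometric series behind the renewal operator with a parameter $\epsilon>0$, evaluate everything explicitly for $\epsilon>0$, and then let $\epsilon\to0^+$, using Proposition \ref{prop:invtran} to deal with the pole of $1/(1-\cL\lambda)$ at the origin. Concretely, for $\epsilon>0$ I would set $R_\epsilon f(t):=\sum_{n=0}^\infty\E\bigl(e^{-\epsilon S_n}f(S_n-t)\bigr)$. Since $\lambda$ is a probability measure on $\R^+$ with $\sigma=\int x\dd\lambda(x)>0$, one has $\cL\lambda(\epsilon)<1$, so $0\le R_\epsilon f(t)\le\|f\|_\infty/(1-\cL\lambda(\epsilon))<\infty$; and since $e^{-\epsilon S_n}f(S_n-t)\uparrow f(S_n-t)$ as $\epsilon\downarrow0$, monotone convergence gives $R_\epsilon f(t)\uparrow Rf(t)$. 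It thus suffices to compute $\lim_{\epsilon\to0^+}R_\epsilon f(t)$.

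Next I would obtain a Fourier representation of $R_\epsilon f$. Using that $\hat f\in W^{1,\infty}$ has support in the compact set $K$, so $\hat f\in L^1$, and that $f$ is continuous, write $f(S_n-t)=\frac1{2\pi}\int e^{i\xi(S_n-t)}\hat f(\xi)\dd\xi$, take expectations and sum over $n$; all interchanges are justified by absolute convergence, the total contribution being bounded by $\|\hat f\|_{L^1}/(1-\cL\lambda(\epsilon))$. Together with $\E\bigl(e^{-(\epsilon-i\xi)S_n}\bigr)=\cL\lambda(\epsilon-i\xi)^n$ and $|\cL\lambda(\epsilon-i\xi)|\le\cL\lambda(\epsilon)<1$ this yields
\[R_\epsilon f(t)=\frac1{2\pi}\int e^{-it\xi}\,\frac{\hat f(\xi)}{1-\cL\lambda(\epsilon-i\xi)}\dd\xi.\]

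Then I would isolate the explicit singular part and pass to the limit. By the definition \eqref{equ:i-pz} of $u$ one has $\frac1{1-\cL\lambda(\epsilon-i\xi)}=u(\epsilon-i\xi)+\frac1{\sigma(\epsilon-i\xi)}$, and the elementary identity $\frac1{\epsilon-i\xi}=\int_0^\infty e^{-(\epsilon-i\xi)s}\dd s$ together with Fubini rewrites the singular piece as $\frac1\sigma\int_0^\infty e^{-\epsilon s}f(s-t)\dd s$, so that
\[R_\epsilon f(t)=\frac1{2\pi}\int e^{-it\xi}u(\epsilon-i\xi)\hat f(\xi)\dd\xi+\frac1\sigma\int_0^\infty e^{-\epsilon s}f(s-t)\dd s.\]
Letting $\epsilon\to0^+$: the left-hand side tends to $Rf(t)$; in the first term, Proposition \ref{prop:invtran} gives that $u$ is holomorphic on a neighbourhood of $\{\Re z\ge0\}$, hence $u(\epsilon-i\xi)\to u(-i\xi)$ uniformly for $\xi\in K$, and since $\hat f$ is bounded and supported in $K$, dominated convergence produces $\frac1{2\pi}\int e^{-it\xi}u(-i\xi)\hat f(\xi)\dd\xi$; in the second term, $|e^{-\epsilon s}f(s-t)|\le|f(s-t)|$ with $\int_0^\infty|f(s-t)|\dd s\le\|f\|_{L^1}$, so dominated convergence produces $\frac1\sigma\int_0^\infty f(s-t)\dd s=\frac1\sigma\int_{-t}^\infty f(x)\dd x$. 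This is exactly the claimed identity.

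The only genuinely delicate point is the last limit $u(\epsilon-i\xi)\to u(-i\xi)$: it requires $u$ to extend holomorphically across the imaginary axis, which is precisely what the non-lattice hypothesis on $\lambda$ (together with the exponential moment) secures through Proposition \ref{prop:invtran}; without it $1-\cL\lambda(-i\xi)$ could vanish at some $\xi\ne0$ and the limiting integral would diverge. Everything else is routine bookkeeping around the pole of $1/(1-\cL\lambda)$ at $0$: the damping $\epsilon>0$ is what makes the geometric series summable term by term, and the identity $\frac1{\epsilon-i\xi}=\int_0^\infty e^{-(\epsilon-i\xi)s}\dd s$ converts the singular contribution directly into the Lebesgue integral $\frac1\sigma\int_{-t}^\infty f$, with no need for distributional boundary values.
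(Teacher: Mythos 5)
Your proposal is correct and follows essentially the same route as the paper: damping the series with a parameter ($\epsilon$ in your notation, $s$ in the paper's operator $B_s$), applying Fourier inversion and Fubini to get the $(1-\cL\lambda(\epsilon-i\xi))^{-1}$ representation, splitting off the pole via $u$ and the identity $\frac{1}{\epsilon-i\xi}=\int_0^\infty e^{-(\epsilon-i\xi)s}\dd s$, and passing to the limit $\epsilon\to 0^+$ by monotone/dominated convergence together with the holomorphy of $u$ from Proposition \ref{prop:invtran}. The only cosmetic difference is that you invoke dominated convergence where the paper uses monotone convergence for the singular term; both are valid under the stated hypotheses.
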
	
\begin{proof}
This is a classical computation, but for completeness, we will include a proof here.
	Introduce a local notation: for $t$ in $\R$ and $s\geq 0$, let $B_s$ be the operator defined by
	\begin{align*}
	\pexp f(t)=\int e^{-s x}f(x+t)\dd\lambda(x).
	\end{align*}
	Then for $n\in\N$,
	\begin{align*}
	\pexp^n(f)(t)&=\int e^{-sx}f(x+t)\dd\lambda^{*n}(x).
	\end{align*}
	When $s=0$, 
	we have
	\begin{equation}\label{one}
	Rf(-t)=\sum_{n\geq 0}B_0^nf(t).
	\end{equation}
	
	Since $f\geq 0$ and $x>0$ in the support of $\lambda$, using the monotone convergence theorem, we have
	\begin{align*}
	\lim_{s\rightarrow 0^+}\sum_{n\geq 0}\int e^{-sx}f(x+t)\dd\lambda^{*n}(x) = \sum_{n\geq 0} \int f(x+t)\dd\lambda^{*n}(x).
	\end{align*}
	Thus
	\begin{equation}\label{equ:slim}
	\sum_{n\geq 0} \pexpo_0^n(f)(t)=\lim_{s\rightarrow 0^+}\sum_{n\geq 0} \pexp^n(f)(t).
	\end{equation}
	
	Using the inverse Fourier transform, we have
	\begin{equation}\label{equ:bsn1}
	\sum_{n\geq 0} \pexp^n(f)(t)
	=\sum_{n\geq 0}\int e^{-sx}\frac{1}{2\pi}\int_{\R}e^{i\xi(x+t)}\hat{f}( \xi)\dd\xi\dd\lambda^{*n}(x).
	\end{equation}
	Since $\hat{f}(\xi)$ has compact support and $|\hat{f}(\xi)|$ is bounded, we know that $\|\hat{f}\|_{L^1}$ is finite. For $s>0$, by $\cL\lambda(s)<1$, we have
	\begin{align*}
	\sum_{n\geq 0}\int e^{-sx}\int_{\R}|\hat{f}( \xi)|\dd\xi\dd\lambda^{*n}(x)= \|\hat{f}\|_{L^1}\sum_{n\geq 0}\int e^{-sx}\dd\lambda^{*n}(x)=\|\hat{f}\|_{L^1}\sum_{n\geq 0}\cL\lambda(s)^n< \infty,
	\end{align*} 
	which implies that the right hand side of \eqref{equ:bsn1} is absolutely convergent.
	Consequently, we can use the Fubini theorem to change the order of the integration. By the hypothesis $\hat{f}(\xi)\in W^{1,\infty}(\R)$, Proposition \ref{prop:invtran} implies that for $s>0$
	\begin{equation}\label{equ:bsn}
	\begin{split}
	\sum_{n\geq 0} \pexp^n(f)(t)&=\frac{1}{2\pi}\int_{\R}\sum_{n\geq 0}\int  e^{(-s+i\xi)x}\hat{f}( \xi)\dd\lambda^{*n}(x)e^{it\xi}\dd\xi\\
	&=\frac{1}{2\pi}\int_{\R}\sum_{n\geq 0} \cL\lambda(s-i\xi)^n\hat{f}(\xi)e^{it\xi}\dd\xi\\
	&=\frac{1}{2\pi}\int_{\R}(1- \cL\lambda(s-i\xi))^{-1}\hat{f}(\xi)e^{it\xi}\dd\xi\\
	&=\frac{1}{2\pi}\int_{\R}\left(\frac{1}{\sigma(s-i\xi)}+u(s-i\xi)\right)\hat{f}(\xi) e^{it\xi}\dd\xi.
	\end{split}
	\end{equation}
	Since $\frac{1}{s-i\xi}=\int_{0}^{+\infty}e^{-(s-i\xi)x}\dd x$ for $s>0$, together with the property $\hat{f}\in L^1(\R)$, we have
	\begin{align}\label{two}
	\frac{1}{2\pi}\int_{\R}\frac{1}{\sigma(s-i\xi)}\hat{f}(\xi) e^{it\xi}\dd\xi
	=\frac{1}{\sigma}\int_{0}^{\infty}f(x+t)e^{-sx}\dd x.
	\end{align}
	When $s\rightarrow 0^+$, since $f$ is integrable, by monotone convergence theorem, the limit is $\frac{1}{\sigma}\int_{t}^{\infty}f(x)\dd x$.
	Since $\hat f(\xi)$ is compactly supported, we have
	\begin{align}\label{three}
	\lim_{s\rightarrow 0^+}\int_{\R}u(s-i\xi)\hat{f}(\xi) e^{it\xi}\dd\xi=\int_{\R}u(-i\xi)\hat{f}(\xi) e^{it\xi}\dd\xi.
	\end{align}
	The proof is complete by combining \eqref{one}-\eqref{three}.
\end{proof}

\begin{lemma}\label{lem:renres} 
	Under the same assumption as in Proposition \ref{prop:renreg}, we have
	\[\Big|\int e^{-it\xi}u(-i\xi)\hat{f}(\xi)\dd\xi\Big|\leq \frac{1}{t}O_K\left(\|\hat f\|_{\infty }+\|\partial_\xi\hat f\|_{\infty }\right), \]
	where $O_K$ is from Proposition \ref{prop:renreg}.
\end{lemma}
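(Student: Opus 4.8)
The plan is to prove Lemma \ref{lem:renres} by a single integration by parts in the frequency variable. Set $v(\xi) := u(-i\xi)\hat f(\xi)$. By Proposition \ref{prop:invtran}, $u$ is holomorphic on a neighbourhood of $\{\Re z \ge 0\}$, so $\xi \mapsto u(-i\xi)$ is $C^\infty$ on $\R$ with $\partial_\xi[u(-i\xi)] = -i\, u'(-i\xi)$, where $u'$ denotes the complex derivative; and $\hat f$, being the Fourier transform of an $L^1$ function, is continuous, while by hypothesis $\hat f \in W^{1,\infty}(\R)$ is supported in the compact interval $K$. In particular $\hat f$ vanishes on $\partial K$, so $v$ is a $C^1$ function supported in $K$, and the boundary terms in the integration by parts vanish:
\[
\int e^{-it\xi}\, u(-i\xi)\hat f(\xi)\, d\xi \;=\; \int_K e^{-it\xi}\, v(\xi)\, d\xi \;=\; \frac{1}{it}\int_K e^{-it\xi}\, v'(\xi)\, d\xi .
\]

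Next I would expand $v'(\xi) = -i\, u'(-i\xi)\hat f(\xi) + u(-i\xi)\,\partial_\xi\hat f(\xi)$, take absolute values inside the integral, and estimate crudely:
\[
\Big|\int e^{-it\xi} u(-i\xi)\hat f(\xi)\, d\xi\Big|
\;\le\; \frac{1}{t}\int_K \Big(|u'(-i\xi)|\,|\hat f(\xi)| + |u(-i\xi)|\,|\partial_\xi\hat f(\xi)|\Big)\, d\xi
\;\le\; \frac{1}{t}\, O_K \big(\|\hat f\|_\infty + \|\partial_\xi\hat f\|_\infty\big),
\]
where $O_K$ is bounded by the length of $K$ — which is comparable to $|K| = \sup\{|x| : x \in K\}$ — times $\sup_{\xi \in K}\big(|u(-i\xi)| + |u'(-i\xi)|\big)$. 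To match the form of $O_K$ stated in Proposition \ref{prop:renreg}, I would finally use that $\lambda$ is a real measure, so $\cL\lambda(\bar z) = \overline{\cL\lambda(z)}$ and hence $u(\bar z) = \overline{u(z)}$, $u'(\bar z) = \overline{u'(z)}$; this gives $|u(-i\xi)| = |u(i\xi)|$ and $|u'(-i\xi)| = |u'(i\xi)| = |\partial_\xi u(i\xi)|$, so indeed $O_K \le |K|\,\sup\{|u(i\xi)| + |\partial_\xi u(i\xi)| : \xi \in K\}$ up to an absolute constant.

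There is no real obstacle here; this is a routine oscillatory-integral estimate. The only points that deserve a line of justification are (i) the legitimacy of the integration by parts — this is exactly where the holomorphy of $u$ near the imaginary axis (so that $u(-i\xi)$ and its $\xi$-derivative are bounded on $K$) and the combination of $W^{1,\infty}$-regularity and compact support of $\hat f$ (so that $v$ is $C^1$ and vanishes at the endpoints of $K$) are used — and (ii) the bookkeeping of the constant in $O_K$, where the comparison between $\int_K d\xi$ and $|K|$ and the conjugation symmetry of $u$ along the imaginary axis enter.
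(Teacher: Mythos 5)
Your proof is correct and follows essentially the same route as the paper: a single integration by parts in $\xi$, using the compact support and $W^{1,\infty}$ regularity of $\hat f$ together with the holomorphy of $u$ near the imaginary axis to justify it, then a crude sup-bound over $K$. The extra remarks on the conjugation symmetry of $u$ are fine bookkeeping but not needed beyond what the paper already records.
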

\begin{proof}
	Use the fact that $\hat f(\xi)$ is compactly supported and $|\hat{f}(\xi)|_{ },\,|\partial_{\xi}\hat{f}(\xi)|_{ }<\infty$. Then applying integration by parts, we have
	\begin{align*}
	\int e^{-it\xi}u(-i\xi)\hat{f}(\xi)\dd\xi&=\frac{1}{it}\int e^{-it\xi}\partial_{\xi}(u(-i\xi)\hat{f}(\xi))\dd\xi\\
	&=\frac{1}{it}\int e^{-it\xi}\left(\partial_{\xi}(u(-i\xi))\hat{f}(\xi)+u(-i\xi)\partial_{\xi}\hat{f}(\xi)\right)\dd\xi.
	\end{align*}
	Since $|u(i\xi)|$ and $|\partial_\xi u(i\xi)|$ are uniformly bounded on compact regions, the result follows.
\end{proof}

\subsection{Regularity properties of renewal measures}\label{sec:regular}

We want to use convolution to smooth out the target function. There exists a non-negative even function $\psi$ such that it is a probability density, and the Fourier transform $\hat{\psi}$ is compactly supported on $[-1,1]$. For example we can take $\hat{\psi}=\tau*\tau$ where $\tau$ is a smooth even function supported on $[-1/2,1/2]$ and $\psi$ is determined via inverse Fourier transform.
Write $\psi_{\delta}(t) := \frac{1}{\delta^2}\psi(\frac{t}{\delta^2})$. Since $\psi$ decays faster than any polynomial, there exists $C_1>0$ such that 
$$\int_{-\delta}^{\delta}\psi_{\delta}(t)\dd t=\int_{-1/\delta}^{1/\delta}\psi(t)\dd t>1-C_1\delta.$$
Then we have the following approximation theorem for the renewal operators of indicator functions:
\begin{prop}\label{prop:renint}
	Let $\delta\leq 1/3C_1$ and $b\geq a$. If $b-a\geq 2\delta$, then for $t>0$, we have
	\begin{equation}\label{ineq:renint}
	R(\one_{[a,b]})(t)\leq 3(b-a)(1/\sigma+C_\psi O_{\delta}(1+|b|+|a|)/t),
	\end{equation}
	where $O_\delta :=O_{[-\delta^{-2},\delta^{-2}]}$ and $C_\psi=1+ \|x \mapsto x\psi(x)\|_{L^1}$. Here $O_{[-\delta^{-2},\delta^{-2}]}$ is from Proposition \ref{prop:renreg} with $K = [-\delta^{-2},\delta^{-2}]$.
\end{prop}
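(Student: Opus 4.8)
\emph{Proof strategy.} The plan is to dominate $\one_{[a,b]}$ pointwise by a smooth function to which Proposition \ref{prop:renreg} applies, and then invoke the monotonicity of the renewal operator $R$. The smoothing device is already available: convolving the enlarged indicator $\one_{[a-\delta,b+\delta]}$ with the approximate identity $\psi_\delta$ reproduces almost all of $\one_{[a,b]}$. Indeed, for $x\in[a,b]$ one has $[-\delta,\delta]\subseteq[\,x-b-\delta,\ x-a+\delta\,]$, whence
\[
\bigl(\one_{[a-\delta,b+\delta]}\ast\psi_\delta\bigr)(x)=\int_{x-b-\delta}^{x-a+\delta}\psi_\delta(y)\,dy\ \geq\ \int_{-\delta}^{\delta}\psi_\delta(y)\,dy\ >\ 1-C_1\delta,
\]
while for $x\notin[a,b]$ the left-hand side is nonnegative. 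Since $\delta\leq 1/(3C_1)$ forces $1-C_1\delta\geq 2/3$, this gives the pointwise bound $\one_{[a,b]}\leq\tfrac32\,\one_{[a-\delta,b+\delta]}\ast\psi_\delta$ on all of $\R$.

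Set $f:=\one_{[a-\delta,b+\delta]}\ast\psi_\delta$, a nonnegative bounded $C^\infty$ function with $\|f\|_{L^1}=b-a+2\delta$. Its Fourier transform factors as $\widehat f(\xi)=\widehat{\one_{[a-\delta,b+\delta]}}(\xi)\,\widehat\psi(\delta^2\xi)$. Since $\widehat\psi$ is supported in $[-1,1]$, the transform $\widehat f$ is supported in $K:=[-\delta^{-2},\delta^{-2}]$, and $\widehat f\in W^{1,\infty}(\R)$ because $\widehat{\one_{[a-\delta,b+\delta]}}\in W^{1,\infty}(\R)$ and $\widehat\psi(\delta^2\,\cdot\,)$ is Schwartz. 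Thus $f$ meets the hypotheses of Proposition \ref{prop:renreg} with this $K$, and combining that proposition with the monotonicity of $R$ gives
\[
R(\one_{[a,b]})(t)\ \leq\ \tfrac32\,Rf(t)\ =\ \tfrac32\Bigl(\frac1\sigma\int_{-t}^{\infty}f(x)\,dx+\frac1t\,O_K\,\|\widehat f\|_{W^{1,\infty}}\Bigr),
\]
with $O_K=O_\delta$ by definition.

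Two estimates then finish the argument. For the main term, $\int_{-t}^{\infty}f\leq\|f\|_{L^1}=b-a+2\delta\leq 2(b-a)$ by the hypothesis $b-a\geq 2\delta$, producing exactly the contribution $3(b-a)/\sigma$. For the error term I would bound $\|\widehat f\|_{\infty}\leq\|\widehat{\one_{[a-\delta,b+\delta]}}\|_\infty\|\widehat\psi\|_\infty\leq b-a+2\delta$, and, writing $\widehat{\one_{[c,d]}}{}'(\xi)=\widehat{(-ix)\one_{[c,d]}}(\xi)$ and $\widehat\psi{}'(\xi)=\widehat{(-ix)\psi}(\xi)$ and applying the Leibniz rule,
\[
\|\widehat f{}'\|_\infty\ \leq\ \int_{a-\delta}^{b+\delta}|x|\,dx\ +\ (b-a+2\delta)\,\delta^2\,\|x\mapsto x\psi(x)\|_{L^1}.
\]
Using $\int_{a-\delta}^{b+\delta}|x|\,dx\leq\tfrac12(b-a+2\delta)(|a|+|b|+2\delta)$, together with $\delta\leq\tfrac12(b-a)\leq\tfrac12(|a|+|b|)$ (so that the $\delta$-corrections get absorbed into the $C_\psi$ term, using $C_\psi>1$) and $\delta\leq 1$ (valid after enlarging $C_1$ if necessary), one checks that $\|\widehat f\|_{W^{1,\infty}}\leq 2(b-a)\,C_\psi\,(1+|a|+|b|)$. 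Substituting this back yields precisely $R(\one_{[a,b]})(t)\leq 3(b-a)\bigl(1/\sigma+C_\psi O_\delta(1+|b|+|a|)/t\bigr)$.

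I expect the only genuine difficulty to be the bookkeeping of constants: one must check that the multiplicative loss $\tfrac32$ from the convolution together with the two-sided enlargement of $[a,b]$ by $\delta$ — harmless exactly because $b-a\geq 2\delta$ — combine to the asserted factor $3$, and that the contribution of $\widehat{\one_{[a-\delta,b+\delta]}}{}'$, estimated through $\int_{a-\delta}^{b+\delta}|x|\,dx$, is majorised by a constant multiple of $(b-a)(1+|a|+|b|)$ rather than by something larger. This is the point where the hypotheses $b-a\geq 2\delta$, $\delta\leq 1/(3C_1)$ and the precise shape of $C_\psi=1+\|x\mapsto x\psi(x)\|_{L^1}$ are all used. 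Beyond Proposition \ref{prop:renreg} and the positivity of $R$, no new ingredient is needed.
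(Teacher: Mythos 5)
Your proposal is correct and follows essentially the same route as the paper: dominate the indicator pointwise by a constant multiple of a $\psi_\delta$-mollified indicator, apply Proposition \ref{prop:renreg} to the mollification (whose Fourier transform is supported in $[-\delta^{-2},\delta^{-2}]$), and bound $\|\widehat{f}\|_{W^{1,\infty}}$ via the moment $\|x\mapsto x\psi(x)\|_{L^1}$. The only cosmetic difference is that you mollify the enlarged interval $[a-\delta,b+\delta]$ and lose a factor $3/2$ plus a factor $2$ from $b-a+2\delta\leq 2(b-a)$, whereas the paper mollifies $[a,b]$ itself and uses $b-a\geq 2\delta$ to get the lower bound $(1-C_1\delta)/2\geq 1/3$ directly; both yield the same constant $3$.
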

\begin{proof}
	If $x$ is in $[a,b]$, then $[x-b,x-a]$ contains at least one of $[0,\delta]$ or $[-\delta,0]$. Therefore
	\[\psi_{\delta}*\one_{[a,b]}(x)=\int_{a}^{b}\psi_{\delta}(x-v)\dd v\geq \int_{0}^{\delta}\psi(v)\dd v\geq (1-C_1\delta)/2. \]
	Then by $C_1\delta\leq 1/3$,
	\begin{equation}\label{ineq:psidellarg}
	\one_{[a,b]}\leq 3\psi_{\delta}*\one_{[a,b]}.
	\end{equation}
	It is sufficient to bound $R(\psi_{\delta}*\one_{[a,b]})(t)$. Proposition \ref{prop:renreg} implies that
	\[R(\psi_{\delta}*\one_{[a,b]})(t) = \frac{1}{\sigma}\int_{-t}^{\infty}\psi_{\delta}*\one_{[a,b]}(x) \, dx+\frac{O_\delta}{t}\|\hat{\psi_\delta}\hat{\one}_{[a,b]}\|_{W^{1,\infty} }. \]
	The first term is less than $\int\psi_{\delta}*\one_{[a,b]}=(b-a)$. For the second term, we have
	\begin{align*}
	\|\hat{\psi_\delta}\hat{\one}_{[a,b]}\|_{W^{1,\infty} }&=\|\hat{\psi_\delta}\hat{\one}_{[a,b]}\|_{\infty }+\|\partial_\xi(\hat{\psi_\delta}\hat{\one}_{[a,b]})\|_{\infty }\\
	&\leq (1+\|x \mapsto x\psi_\delta(x)\|_{L^1})(\|\one_{[a,b]}\|_{L^1}+\ \|x \mapsto x\one_{[a,b]}(x)\|_{L^1})\\
	& \leq C_\psi(b-a)(1+|a|+|b|).
	\end{align*}
\end{proof}

Because every step of the random walk $X_1,X_2,\dots$ is positive, every trajectory can only stay at most $Cs$ times for $s\geq 1$ in the interval $[t,t+s]$, with $C$ depending on $\lambda$. Recall we use a notation $A \ll B$ to mean there exists a universal constant $C > 0$ such that $A \leq C B$.
\begin{lemma}\label{lem:renintts}
	For all $s \geq 1$ and $t \in \R$, we have
	\begin{equation}\label{ineq:renintts}
	R(\one_{[0,s]})(t)\ll \max\{1,s \}.
	\end{equation}
\end{lemma}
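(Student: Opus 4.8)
The plan is to unwind the definition of the renewal operator and read $R(\one_{[0,s]})(t)$ as the expected number of visits of the random walk to a window of length $s$. First I would write
$$R(\one_{[0,s]})(t)=\sum_{n=0}^{\infty}\E\big(\one_{[0,s]}(S_n-t)\big)=\sum_{n=0}^{\infty}\P\big(S_n\in[t,t+s]\big),$$
and, since every term is non-negative, apply Tonelli's theorem to interchange sum and expectation. This identifies the left-hand side with $\E(N_{t,s})$, where $N_{t,s}:=\card\{n\ge 0:S_n\in[t,t+s]\}$ counts the indices at which the walk lies in the window $[t,t+s]$ (here $S_0=0$).

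Next I would exploit the positivity of the increments. Because $\lambda$ is supported on $\R^+$ and has finite support, $m:=\min(\supp\lambda)>0$, so almost surely $S_{n+1}-S_n=X_{n+1}\ge m$ for all $n$; hence $n\mapsto S_n$ is strictly increasing with consecutive gaps at least $m$. Consequently any interval of length $s$ can contain at most $\lfloor s/m\rfloor+1$ of the points $S_0<S_1<S_2<\cdots$, i.e.\ $N_{t,s}\le s/m+1$ pathwise, uniformly in $t\in\R$. (If $t+s<0$ the window contains no $S_n\ge 0=S_0$, so $N_{t,s}=0$.) Taking expectations yields $R(\one_{[0,s]})(t)=\E(N_{t,s})\le s/m+1$, and since $s\ge 1$ this is $\ll\max\{1,s\}=s$, the implied constant depending only on $\lambda$ through $m$.

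There is essentially no obstacle here; the argument is a short pathwise counting bound. The only point that requires a little care is that the estimate must hold uniformly in $t\in\R$ (including very negative $t$), which is precisely what the deterministic gap lower bound $X_{n+1}\ge m$ delivers, so no renewal machinery or non-lattice hypothesis is needed for this lemma.
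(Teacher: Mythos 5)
Your proof is correct and is exactly the argument the paper has in mind: the paper states this lemma without a formal proof, justifying it only by the preceding sentence that positivity of the steps forces each trajectory to visit $[t,t+s]$ at most $Cs$ times, which is precisely your pathwise gap bound $X_{n+1}\ge m=\min(\supp\lambda)>0$ combined with Tonelli. No further comparison is needed.
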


\subsection{Residue process}
\label{subsecrespro}
We introduce the \textit{residue process}, which not only deals with $S_n$ but also takes into account the next step $X_{n+1}$. 
Let $f$ be a non-negative bounded Borel function on $ \R^2$. For $t\in  \R$, we define the \textit{residue operator} by
\begin{equation}
\begin{split}
\Res f(t)&:=\sum_{n\geq 0}\iint f(y, x-t)\dd\lambda^{*n}(x)\dd\lambda(y).
\end{split}
\end{equation}
For clarity, we will from now on use the notations $\R_x$, $\R_y$ and $\R_\xi$ to be real lines but highlighted the coordinate we use. Here the space coordinates are $x$ and $y$ and frequency (Fourier) coordinates are denoted by $\xi$.

Let
$$\calF_xf(y,\xi) := \int f(y,x)e^{-ix\xi}\dd x$$ 
be the Fourier transform of $f$ on $\R_x$. Let $F$ be a function on $  \R_y\times\R_{\xi}$,. Define the infinity norm by
\[\|F\|_\Lip=\sup_{y,\xi\in\R}|F(y,\xi)|.  \]

\begin{prop}[Residue process]\label{prop:residue}
	Let $f$ be a non-negative bounded continuous function on $ \R^2$. Assume that the projection of $\supp\calF_x(f)$ onto $\R_\xi$ is contained in a compact interval $K$, and $\|\calF_x(f)\|_\Lip,\|\partial_\xi\calF_x(f)\|_\Lip$ are finite. 
	Then for $t>0$, we have 
	\begin{equation}
	\begin{split}
	\Res f(t)=&\frac{1}{\sigma}\int_{-t}^\infty\int_{\R^+} f(y,x)\dd\lambda(y)\dd x+\frac{1}{t}O_K\left(\|\calF_x(f)\|_\Lip+\|\partial_\xi\calF_x(f)\|_\Lip \right),
	\end{split}
	\end{equation}
	where $O_K$ is from Proposition \ref{prop:renreg}.
\end{prop}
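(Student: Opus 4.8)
The plan is to exploit that the residue operator factorises over the variable $y$: since $y$ enters only through $f(y,\cdot)$ and is integrated against $\lambda$ independently of the convolution powers $\lambda^{*n}$ acting on $x$, the first step is to fix $y$, write $f_y := f(y,\cdot)$, and observe by Tonelli's theorem (all integrands are non-negative) and the definition of the renewal operator that
\[
\Res f(t) = \int_{\R^+}\Big(\sum_{n\geq 0}\int f_y(x-t)\dd\lambda^{*n}(x)\Big)\dd\lambda(y) = \int_{\R^+} R f_y(t)\,\dd\lambda(y).
\]
This reduces the two-dimensional statement to the one-dimensional key renewal theorem of Proposition \ref{prop:renreg}, applied with $y$ frozen, followed by integration against the probability measure $\lambda$.

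The second step is to verify that each slice $f_y$ satisfies the hypotheses of Proposition \ref{prop:renreg}. It is non-negative, bounded and continuous (inherited from $f$), its Fourier transform in $x$ is $\hat{f_y}(\xi) = \calF_x f(y,\xi)$, whose support lies in the fixed compact interval $K$ — the projection of $\supp\calF_x f$ onto $\R_\xi$ — and
\[
\|\hat{f_y}\|_{W^{1,\infty}} \leq \|\calF_x f\|_\Lip + \|\partial_\xi\calF_x f\|_\Lip
\]
uniformly in $y$ (the integrability $f_y\in L^1(\R_x)$ needed in Proposition \ref{prop:renreg} being part of the decay hypotheses inherited from $f$). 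Proposition \ref{prop:renreg} then gives, for $t>0$,
\[
R f_y(t) = \frac{1}{\sigma}\int_{-t}^{\infty} f_y(x)\dd x + \frac{1}{t}\,O_K\,\|\hat{f_y}\|_{W^{1,\infty}},
\]
and the key observation is that $O_K \leq |K|\sup_{\xi\in K}\big(|u(i\xi)|+|\partial_\xi u(i\xi)|\big)$ depends only on $K$, hence is independent of $y$.

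Finally, integrate this identity against $\dd\lambda(y)$: the error term contributes at most $\frac{1}{t}O_K\big(\|\calF_x f\|_\Lip+\|\partial_\xi\calF_x f\|_\Lip\big)$ because $\lambda$ is a probability measure, and for the main term Tonelli's theorem (again using $f\geq0$) allows interchanging the $x$- and $y$-integrations,
\[
\frac{1}{\sigma}\int_{\R^+}\int_{-t}^\infty f(y,x)\dd x\,\dd\lambda(y) = \frac{1}{\sigma}\int_{-t}^\infty\int_{\R^+} f(y,x)\,\dd\lambda(y)\dd x,
\]
which is exactly the asserted main term; combining the three displays yields the proposition. The only points needing care are the repeated interchanges of the infinite sum with the integrals — legitimate by non-negativity of $f$ and in any case already established within the proof of Proposition \ref{prop:renreg} via the absolute convergence of the relevant series for $s>0$ — and the uniformity in $y$ of the constant $O_K$; I expect this uniformity to be the only mildly subtle point, since all of the genuine analytic content (the Laplace-transform inversion and the $1/t$ error estimate) has already been done in the one-dimensional Proposition \ref{prop:renreg}, leaving the residue version essentially a Fubini argument on top of it.
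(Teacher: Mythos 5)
Your proposal is correct and takes essentially the same route as the paper: both arguments reduce the claim to Proposition \ref{prop:renreg} by a Tonelli interchange, the paper by first averaging in $y$ to form $Qf(x)=\int f(y,x)\,\dd\lambda(y)$ and applying the key renewal theorem once to $Qf$, you by applying it to each slice $f_y=f(y,\cdot)$ and then integrating in $y$. This reordering is immaterial because, as you note, the constant $O_K$ is uniform in $y$ and $\lambda$ is a probability measure, and your handling of the implicit $L^1$ hypothesis on the slices matches the level of rigour in the paper's own verification for $Qf$.
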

\begin{proof}
	For a bounded continuous function $f$ on $  \R^2$ and $x\in  \R$, we define an operator $Q$ by
	\[Qf(x)=\int f(y,x)\dd\lambda(y). \]
	Then
	\[\Res f(t)=\sum_{n\geq 0}\int Qf( x-t)\dd\lambda^{*n}(x)=R(Qf)(t). \]
	
	We want to use Proposition \ref{prop:renreg}, so we need to verify the hypotheses. The function $Qf$ is bounded since $f$ is bounded and integrable since $\|\calF_x f\|_{\infty}$ is finite. Then
	\begin{align*}
	\widehat{Qf}(\xi)&=\int Qf(x)e^{-ix\xi}\dd x=\int f(y,x)e^{-ix\xi}\dd x\dd\lambda(y)=\int\calF_xf(y,\xi)\dd\lambda(y).
	\end{align*}
	Thus $\widehat{Qf}$ is also compactly supported on $\R_\xi$. 
	\begin{lemma}[Change of norm]
		Under the assumptions of Proposition \ref{prop:residue}, we have\[\|\widehat{Qf}\|_{\linf}\leq\|\calF_x(f)\|_\Lip,\  \|\partial_\xi\widehat{Qf}\|_{\linf}\leq\|\partial_\xi\calF_xf\|_\Lip.\]
	\end{lemma}
	\begin{proof}
		The second inequality follows by the same computation as $\widehat{Qf}$.
	\end{proof}
	By Proposition \ref{prop:renreg}, we have
	\begin{align*}
	R(Qf)(t&)=\frac{1}{\sigma} \int_{-t}^{\infty}Qf(x)\dd x+\frac{1}{t}O_K\left(\|\widehat{Qf}\|_{\linf}+\|\partial_\xi \widehat{Qf}\|_{\linf}\right)\\
	&=\frac{1}{\sigma} \int_{-t}^{\infty}Qf(x)\dd x+\frac{1}{t}O_K\left(\|\calF_x(f)\|_\Lip+\|\partial_\xi \calF_x(f)\|_\Lip\right).
	\end{align*}
	The proof of Proposition \ref{prop:residue} is complete.
\end{proof}

\subsection{Residue process with cutoff}
In this section, we restrict the residue process to the sequences $(X_1,\dots,X_n, X_{n+1})$ such that $S_n<t\leq S_n+ X_{n+1}$. For a $C^1$ function $f$ on $\R_y\times \R_x$, define an $x$-coordinate partial derivative $C^1$ norm by
\begin{align}
\| f \|_{\lf} := \| f\|_\infty+\|\partial_xf\|_\infty.
\end{align}
Define a \textit{cutoff operator} $\Cut$ from non-negative Borel functions on $ \R^2$ to functions on $ \R$ by
\[\Cut f(t) := \sum_{n\geq 0}\iint_{ x<t\leq y+x} f(y, x-t)\dd\lambda(y)\dd\lambda^{*n}(x). \]
Then we have:

\begin{lemma}\label{lem:resfin}
	There exists $C_2>0$ such that for all $t\in\R$, we have
	\begin{equation}\label{ineq:resfin}
	\Cut(\one)(t)=\Res(\one_{-y\leq x<0})(t)\leq C_2.
	\end{equation}
\end{lemma}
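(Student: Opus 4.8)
The claim is that $\Cut(\one)(t) = \Res(\one_{-y \leq x < 0})(t)$ is bounded uniformly in $t \in \R$. The first equality is essentially a tautology: by definition $\Cut f(t) = \sum_{n \geq 0} \iint_{x < t \leq y+x} f(y, x-t)\,d\lambda(y)\,d\lambda^{*n}(x)$, and after the substitution $x \mapsto x - t$ in the inner integral the constraint $x < t \leq y + x$ becomes $x - t < 0 \leq y + (x-t)$, i.e. $-y \leq x - t < 0$; evaluating $\one$ at the shifted argument turns the cutoff region into exactly the indicator $\one_{-y \leq x < 0}$ evaluated at $x - t$. So it remains to bound $\Res(\one_{-y \leq x < 0})(t)$.

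The plan is to dominate the residue process by a renewal operator of an indicator of a bounded interval and then invoke Lemma \ref{lem:renintts}. Write $M := |\supp \lambda|$. Since $\lambda$ is supported on $(0, M]$, in the integral defining $\Res(\one_{-y \leq x < 0})$ we have $0 < y \leq M$, so the constraint $-y \leq x < 0$ forces $x \in [-M, 0)$, hence $\one_{-y \leq x < 0}(\cdot) \leq \one_{[-M,0]}(\cdot)$ pointwise (as a function of the second variable, uniformly in $y$). Therefore
\begin{align*}
\Res(\one_{-y\leq x<0})(t) &= \sum_{n\geq 0}\iint \one_{-y\leq x-t<0}\,d\lambda(y)\,d\lambda^{*n}(x) \\
&\leq \sum_{n\geq 0}\int \one_{[-M,0]}(x-t)\,d\lambda^{*n}(x)\cdot\lambda(\R^+) \\
&= R(\one_{[-M,0]})(t).
\end{align*}
Finally, $\one_{[-M,0]}(x) = \one_{[0,M]}(-x) = \one_{[0,M]}(x+M)$ in the variable $x$, so $R(\one_{[-M,0]})(t) = R(\one_{[0,M]})(t+M)$ by the translation $t \mapsto t + M$ in the definition of $R$ (equivalently, just bound $\one_{[-M,0]} \le \one_{[-M-1, M+1]}$ and shift). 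Applying Lemma \ref{lem:renintts} with $s = M$ (or $s = 2M+2$) gives $R(\one_{[-M,0]})(t) \ll \max\{1, M\}$, a constant depending only on $\lambda$. Setting $C_2$ to be this constant completes the proof.

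The only genuine content here is Lemma \ref{lem:renintts}, which is already available; the rest is bookkeeping with the support of $\lambda$ and a translation of the $t$ variable. The one point to be careful about is that the bound in Lemma \ref{lem:renintts} is stated for $\one_{[0,s]}$ with $s \geq 1$ and $t \in \R$, so I should phrase the domination so that the interval has the form $[0,s]$ after a shift and has length at least $1$ — handled by enlarging $[-M,0]$ to $[-M,1]$ if necessary. I do not expect any real obstacle.
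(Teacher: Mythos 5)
Your proof is correct and follows essentially the same route as the paper: both reduce the bound to Lemma \ref{lem:renintts} applied to the indicator of an interval of length $\leq |\supp\lambda|$. The only differences are cosmetic — the paper writes $\Cut(\one)(t)=\int R(\one_{[-y,0]})(t)\,d\lambda(y)\ll\int\max\{1,y\}\,d\lambda(y)$, keeping the $y$-dependence so the argument also works under the exponential-moment hypothesis where $|\supp\lambda|$ may be infinite, whereas you bound $y\leq|\supp\lambda|$ uniformly first (fine in the finitely supported setting); also your shift should read $R(\one_{[-M,0]})(t)=R(\one_{[0,M]})(t-M)$ rather than $t+M$, which is immaterial since Lemma \ref{lem:renintts} holds for all $t\in\R$.
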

\begin{proof}
	By Lemma \ref{lem:renintts}, we have
	\begin{align*}
	\quad\sum_{n\geq 0}\lambda\otimes\lambda^{*n}\{(y,x)| x-t\in[- y,0], y\geq 0\}=\int R(\one_{[-y,0]})(t)\dd\lambda(y)\ll \int\max\{1,y\}\dd\lambda(y),
	\end{align*}
	which is the claim by the definitions of $E_C$ and $E$.
\end{proof}

By Lemma \ref{lem:resfin}, this cutoff operator $\Cut$ is actually well defined for bounded Borel functions. 
\begin{prop}\label{prop:rescut}
	Let $f$ be a continuous function on $ \R^2$ with $\| f\|_\lf$ finite. Assume that the projection of $\supp f$ on $\R_y$ is contained in a compact set $K$.
	For all $1/3C_1>\delta>0$ and $t>|K|+\delta$, we have
	\begin{equation}
	\begin{split}
	\Cut f(t)=\int_{\R^+}\int_{-y}^{0} f(y,x)\dd x\dd\lambda(y) +O_K(\delta +O_\delta/t)\| f\|_\lf,
	\end{split}
	\end{equation}
	where $O_K$ only depends on $K$ and $\lambda$, and
	$$O_\delta := \sup\{|u(i\xi)|+|\partial_\xi u(i\xi)| : \xi\in[-\delta^{-2},\delta^{-2}]\}.$$
\end{prop}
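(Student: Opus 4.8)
The plan is to approximate the cutoff operator $\Cut f$ by the residue operator $\Res$ applied to a smoothed version of $f$ multiplied by an indicator of the region $-y \leq x < 0$, and then invoke Proposition \ref{prop:residue}. First I would write $\Cut f(t) = \Res\big((y,x) \mapsto f(y,x)\one_{-y \leq x < 0}\big)(t)$, which is just the definition; the difficulty is that the function $(y,x) \mapsto f(y,x)\one_{-y \leq x < 0}$ has a jump discontinuity in $x$ at $x = 0$ and at $x = -y$, so its Fourier transform in the $x$-variable is not compactly supported and we cannot apply Proposition \ref{prop:residue} directly. To get around this, I would convolve in the $x$-variable with the mollifier $\psi_\delta$ from Section \ref{sec:regular}, whose Fourier transform is supported in $[-\delta^{-2},\delta^{-2}]$, and separately replace the sharp indicator $\one_{[-y,0]}(x)$ by a smooth bump $\chi_{\delta,y}(x)$ that equals $1$ on $[-y+\delta, -\delta]$, is supported on $[-y-\delta,\delta]$, and has $\|\chi_{\delta,y}\|_{C^1} = O(\delta^{-1})$; this is possible as long as $y$ is bounded away from $0$, which is fine on the compact set $K$ (and the contribution of small $y$ is negligible).

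The key steps, in order, would be: (1) Write $\Cut f(t) - \widetilde{E}(t)$ where $\widetilde E$ is the residue operator applied to the smoothed integrand $(y,x)\mapsto (f(y,\cdot)\chi_{\delta,y})*\psi_\delta(x)$, and bound the difference by the $\Res$-mass of the symmetric difference of the regions, i.e.\ by a term of size $O_K(\delta)\|f\|_{\lf}$ using Lemma \ref{lem:resfin}, Lemma \ref{lem:renintts} and Proposition \ref{prop:renint}; the hypothesis $t > |K| + \delta$ ensures the smoothing stays in the relevant range and the $x$-integration window $[-t,\infty)$ fully contains the support. (2) Apply Proposition \ref{prop:residue} to the smoothed integrand: its $x$-Fourier transform is supported in $K' = [-\delta^{-2},\delta^{-2}]$ and its $\Lip$-norms are $O(\|f\|_{\lf})$ after accounting for the $\|\psi\|$-type constants and the $\chi_{\delta,y}$ factor, yielding the main term $\frac{1}{\sigma}\int_{-t}^\infty \int_{\R^+} (f\chi_{\delta,y})*\psi_\delta(y,x)\,d\lambda(y)\,dx$ plus an error $\frac{1}{t} O_\delta \|f\|_{\lf}$. (3) Simplify the main term: since $\psi_\delta$ is a probability density and the $x$-integral is now over all of $\R$ (the integrand being supported where $x \geq -|K|-\delta > -t$), the convolution integrates out and the $\chi_{\delta,y}$ factor can be swapped back for $\one_{[-y,0]}$ at cost $O_K(\delta)\|f\|_\infty$, leaving $\int_{\R^+}\int_{-y}^0 f(y,x)\,dx\,d\lambda(y)$.

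Collecting the three error contributions gives the stated $O_K(\delta + O_\delta/t)\|f\|_{\lf}$. I expect the main obstacle to be bookkeeping the error from replacing the sharp cutoff region $\{-y \leq x < 0\}$ by a smooth one and from the $x$-mollification: one must check that the $\Res$-measure of an $O(\delta)$-neighbourhood of the boundary surfaces $\{x=0\}$ and $\{x=-y\}$ is genuinely $O_K(\delta)$ uniformly in $t$, which is exactly what Lemma \ref{lem:renintts} (linear growth of $R(\one_{[0,s]})$) and Proposition \ref{prop:renint} are designed to supply, together with integrating over $y$ in the compact set $K$ against $\lambda$. A secondary technical point is handling $y$ near $0$, where a smooth bump on $[-y,0]$ of width $\to 0$ cannot have controlled $C^1$ norm; this is dealt with by noting $\int_{y<\delta}\int_{-y}^0 |f|\,dx\,d\lambda(y) = O(\delta\|f\|_\infty)$ and similarly bounding the $\Cut$ contribution of such $y$ via Lemma \ref{lem:renintts}, so these terms are absorbed into the $O_K(\delta)\|f\|_{\lf}$ error.
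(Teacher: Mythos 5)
Your proposal is correct and follows essentially the same route as the paper: write $\Cut f(t)=\Res(f\,\one_{-y\leq x<0})(t)$, mollify in the $x$-variable with $\psi_\delta$ to gain compact Fourier support, apply Proposition \ref{prop:residue}, and control the error near the jump set $\{x=0\}\cup\{x=-y\}$ via Lemma \ref{lem:resfin}, Lemma \ref{lem:renintts} and Proposition \ref{prop:renint}. The only real deviation is your extra smooth cutoff $\chi_{\delta,y}$, which is redundant --- convolving with $\psi_\delta$ alone already compactifies the $x$-Fourier support --- and it is precisely this redundancy that creates the small-$y$ difficulty you then have to patch; the paper instead works directly with $\psi_\delta*(f\,\one_{[-y,0]})$ and a pointwise comparison (Lemma \ref{lem:chadif}), so that issue never arises.
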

\begin{remark}
	We decompose $f$ into real and imaginary parts, then decompose these two parts into positive and negative parts. Each part satisfies the hypotheses of Proposition \ref{prop:rescut}, with the support and the Lipschitz norm bounded by the original one. Thus, it is sufficient to prove this proposition for $f$ non-negative.
\end{remark}
The following lemma connects the cutoff operator $E_C$ with the residue operator $E$.
\begin{lemma}
	Under the assumptions of Proposition \ref{prop:rescut}, let
	$$ f_o(y,x) :=\one_{-y\leq x<0} f(y,x).$$ 
	Then \[\Cut f(t)=\Res f_o(t).\]
\end{lemma}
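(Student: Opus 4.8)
The plan is to unwind both definitions and observe that, once the cutoff region is rewritten in terms of the indicator defining $f_o$, the two sides are literally the same iterated sum of integrals; the only point that needs an argument is the interchange of the order of summation and integration, which is exactly what Lemma \ref{lem:resfin} is there to supply.

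First I would rewrite the cutoff region. For fixed $t$, the indicator $\one_{-y\le x-t<0}$ occurring in $f_o(y,x-t)=\one_{-y\le x-t<0}\,f(y,x-t)$ equals $1$ precisely when $-y\le x-t$ and $x-t<0$, i.e.\ when $x<t\le x+y$. Hence
\[
\begin{split}
\Res f_o(t)&=\sum_{n\ge 0}\iint \one_{-y\le x-t<0}\,f(y,x-t)\,\dd\lambda^{*n}(x)\,\dd\lambda(y)\\
&=\sum_{n\ge 0}\iint_{x<t\le y+x} f(y,x-t)\,\dd\lambda^{*n}(x)\,\dd\lambda(y),
\end{split}
\]
which is exactly $\Cut f(t)$ up to the order in which the two inner integrations are performed.

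Second I would justify that interchange. Since $f$ is bounded, $|f_o(y,x-t)|\le\|f\|_\infty\,\one_{-y\le x-t<0}$, so
\[
\sum_{n\ge 0}\iint \big|f_o(y,x-t)\big|\,\dd\lambda^{*n}(x)\,\dd\lambda(y)\le\|f\|_\infty\,\Res(\one_{-y\le x<0})(t)\le C_2\,\|f\|_\infty<\infty
\]
by Lemma \ref{lem:resfin}. With this absolute convergence in hand, Tonelli's and Fubini's theorems allow me to swap $\dd\lambda(y)$ with $\dd\lambda^{*n}(x)$ inside each term and to reorder the sum over $n$ with the integrals, which yields $\Res f_o(t)=\Cut f(t)$. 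The same bound also shows that $f_o$ is a bounded Borel function, so both $\Cut f$ and $\Res f_o$ are well defined in the sense discussed after Lemma \ref{lem:resfin}.

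I do not anticipate a genuine obstacle: the statement is a bookkeeping identity, and the single step requiring care — the Fubini interchange — has already been prepared by the finiteness estimate of Lemma \ref{lem:resfin}.
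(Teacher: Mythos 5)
Your proposal is correct and matches what the paper intends: the identity $-y\le x-t<0 \iff x<t\le x+y$ makes the two expressions coincide term by term, and the Fubini interchange is justified by the finiteness bound of Lemma \ref{lem:resfin}. The paper omits the proof entirely, treating it as a direct unwinding of the definitions, so your write-up simply supplies the routine details.
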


Using $\psi_\delta$ to regularize these functions, we write 
$$ f_\delta(y,x):=\int  f_o(y,x-x_1)\psi_\delta(x_1)\dd x_1=\psi_\delta*f_o(y,x).$$
\begin{lemma}\label{lem:rescut}
	Under the same hypotheses as in Proposition \ref{prop:rescut}, we have
	\begin{align*}
	\Res( f_\delta)(t)=\int_{\R^+}\int_{-y}^{0} f(y,x)\dd x\dd\lambda(y) +O\Big(\delta+\frac{O_\delta}{t}(|K|+|K|^2)\Big)\| f\|_\infty.
	\end{align*}
\end{lemma}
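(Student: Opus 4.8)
We must show that, under the hypotheses of Proposition~\ref{prop:rescut}, the regularized residue process $\Res(f_\delta)(t)$ equals the main term $\int_{\R^+}\int_{-y}^0 f(y,x)\,dx\,d\lambda(y)$ up to an error $O(\delta + (O_\delta/t)(|K|+|K|^2))\|f\|_\infty$.

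**Plan of the proof.** The starting point is Proposition~\ref{prop:residue} applied to the function $f_\delta = \psi_\delta * f_o$, where $f_o(y,x) = \one_{-y\le x<0} f(y,x)$. The first task is to check that $f_\delta$ satisfies the hypotheses of Proposition~\ref{prop:residue}: it is continuous and bounded (indeed $\|f_\delta\|_\infty \le \|f\|_\infty$ since $\psi_\delta$ is a probability density), and — crucially — $\calF_x(f_\delta)(y,\xi) = \hat\psi_\delta(\xi)\,\calF_x(f_o)(y,\xi) = \hat\psi(\delta^2\xi)\,\calF_x(f_o)(y,\xi)$, so its support in the $\xi$-variable lies in $[-\delta^{-2},\delta^{-2}] =: K_\delta$ because $\hat\psi$ is supported in $[-1,1]$. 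Thus the relevant $O_K$ constant in Proposition~\ref{prop:residue} is exactly $O_\delta$. Next I would bound the Lipschitz-type quantities $\|\calF_x(f_\delta)\|_\infty$ and $\|\partial_\xi \calF_x(f_\delta)\|_\infty$ in terms of $\|f\|_\infty$ and the size of $K$: since $f_o$ is supported in $x\in[-|K|,0]$ (because $-y\le x<0$ and $y$ ranges over the compact set $K$ with $|y|\le |K|$), we have $\calF_x(f_o)(y,\xi) = \int_{-|K|}^0 f(y,x) e^{-ix\xi}\,dx$, giving $\|\calF_x(f_o)\|_\infty \le |K|\,\|f\|_\infty$ and $\|\partial_\xi \calF_x(f_o)\|_\infty \le \tfrac12 |K|^2\|f\|_\infty$; the convolution factor $\hat\psi(\delta^2\xi)$ is bounded by $1$ with $\xi$-derivative bounded by $\delta^2\|\hat\psi'\|_\infty$, which only helps. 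Assembling, the error term coming from Proposition~\ref{prop:residue} is $\frac{1}{t}O_\delta\cdot O(|K| + |K|^2)\|f\|_\infty$, matching the second part of the claimed error.

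**The main term and the $\delta$-error.** It remains to compare the leading term of Proposition~\ref{prop:residue} applied to $f_\delta$, namely $\frac{1}{\sigma}\int_{-t}^\infty \int_{\R^+} f_\delta(y,x)\,d\lambda(y)\,dx$, with the target $\int_{\R^+}\int_{-y}^0 f(y,x)\,dx\,d\lambda(y)$. Here I would first interchange the $x$-integral and the convolution: $\int_{-t}^\infty f_\delta(y,x)\,dx = \int_{-t}^\infty (\psi_\delta * f_o)(y,x)\,dx$. Since for $t > |K|+\delta$ the support of $f_o(y,\cdot)$ (contained in $[-|K|,0]$) together with the $\delta$-spread of $\psi_\delta$ stays well inside $(-t,\infty)$, we get $\int_{-t}^\infty f_\delta(y,x)\,dx = \int_{\R} f_o(y,x)\,dx = \int_{-y}^0 f(y,x)\,dx$ exactly — there is no truncation loss, and the total mass of $\psi_\delta$ being $1$ is what makes this an equality. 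Wait — this would make the leading term equal to $\frac{1}{\sigma}\int_{\R^+}\int_{-y}^0 f(y,x)\,dx\,d\lambda(y)$, which differs from the claimed main term by the factor $\tfrac1\sigma$; so in fact the $O(\delta)\|f\|_\infty$ error must absorb a genuine discrepancy, and the cleaner route is to \emph{not} use the exact integral identity but instead compare $\Cut f$ directly. The honest plan: write $\Cut f(t) = \Res(f_o)(t)$ via the connecting lemma, and estimate $\Res(f_o) - \Res(f_\delta)$ using that $f_o$ and $f_\delta$ differ only near the endpoints $x=0$ and $x=-y$ on a set of $x$-length $O(\delta)$ for each fixed $y$; bounding this difference by $\|f\|_\infty$ times $\Res$ of an indicator of a union of two $\delta$-wide slabs, and invoking Lemma~\ref{lem:renintts} / Lemma~\ref{lem:resfin}-type estimates, yields the $O(\delta)\|f\|_\infty$ term. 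Then $\Res(f_\delta)$ is handled by the smooth estimate above.

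**Expected main obstacle.** The delicate point is the bookkeeping of the two error sources — the $O(\delta)\|f\|_\infty$ from replacing the sharp cutoff $f_o$ by its mollification $f_\delta$ (or vice versa), and the $(O_\delta/t)(|K|+|K|^2)\|f\|_\infty$ from the renewal/residue estimate at scale $\delta^{-2}$ — and making sure the mollification-endpoint error really is $O(\delta)$ uniformly in $t$, which requires Lemma~\ref{lem:renintts} to control how many renewal epochs land in a $\delta$-window near $t$. The condition $t>|K|+\delta$ is exactly what guarantees the relevant windows are in the region $x-t<0$ so that the cutoff structure is respected and no boundary effects at $x=-t$ intrude; I would make this explicit. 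Everything else is the routine Fourier-support and Lipschitz-norm tracking sketched above.
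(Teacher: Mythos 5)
Your first paragraph is exactly the paper's argument: apply Proposition~\ref{prop:residue} to $f_\delta=\psi_\delta*f_o$, note that $\calF_x f_\delta=\hat\psi_\delta\,\calF_x f_o$ forces the $\xi$-support into $[-\delta^{-2},\delta^{-2}]$ so that the relevant constant is $O_\delta$, and bound $\|\calF_x f_o\|_\infty\leq |K|\,\|f\|_\infty$, $\|\partial_\xi\calF_x f_o\|_\infty\leq |K|^2\|f\|_\infty$ using that $x$ ranges over $[-y,0]$ with $|y|\leq|K|$. You should have stopped there and pushed that computation through. The ``wait'' moment is caused by a typo in the paper, not by a flaw in your argument: the main term of Lemma~\ref{lem:rescut} (and of Proposition~\ref{prop:rescut}) should indeed carry the prefactor $\tfrac{1}{\sigma}$, exactly as it does in Proposition~\ref{prop:residue} and in the final statement, Proposition~\ref{prop:renewal}, where $\tfrac1\sigma\int g\,p$ appears. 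Your proposed repair --- absorbing the discrepancy into the $O(\delta)$ term, or switching to a direct comparison of $\Cut f=\Res f_o$ with $\Res f_\delta$ --- cannot work and is not needed: a multiplicative factor $\tfrac1\sigma\neq 1$ on a fixed nonzero main term is not an additive $O(\delta)$ error, and the comparison $\Res(|f_\delta-f_o|)=O(\delta+O_\delta/t)\|f\|_{C^1_x}$ is precisely the content of the \emph{subsequent} step (the proof of Proposition~\ref{prop:rescut} via Lemma~\ref{lem:chadif}), not of this lemma, which is a statement about $\Res(f_\delta)$ alone.

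One genuine inaccuracy in your sketch: $\psi_\delta$ is \emph{not} compactly supported (it cannot be, since $\hat\psi$ is), so the identity $\int_{-t}^\infty f_\delta(y,x)\dd x=\int_{-y}^0 f(y,x)\dd x$ is not exact. The paper computes
\begin{equation*}
\int_{-t}^{\infty} f_{\delta}(y,x)\dd x=\int_{-y}^{0} f(y,x_1)\dd x_1-\int_{-y}^{0} f(y,x_1)\int_{-\infty}^{-t-x_1}\psi_{\delta}(x)\dd x\dd x_1,
\end{equation*}
and uses $t>|K|+\delta$ to get $-t-x_1\leq -\delta$ together with the tail bound $\int_{-\infty}^{-\delta}\psi_\delta\leq C_1\delta$ (rapid decay of $\psi$), so the truncation costs $O(\delta)\|f\|_\infty$ per $y$; integrating in $y$ and controlling the resulting main term by Lemma~\ref{lem:resfin} gives the $O(\delta)\|f\|_\infty$ contribution. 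This is where the first half of the error term actually comes from, so it must be carried out rather than declared zero.
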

\begin{proof}
	We want to verify the conditions in Proposition \ref{prop:residue} and then use Proposition \ref{prop:residue}. For the Fourier transform, we have
	\begin{align*}
	\calF_x f_\delta=\calF_x(\psi_\delta*f_o)=\hat{\psi}_\delta\calF_x f_o.
	\end{align*} 
	We need to estimate the infinity norm of $\calF_x f_o(y,\xi)$. This function equals
	\[\int  f_o(y,x)e^{-ix\xi}\dd x=\int_{-y}^{0} f(y,x)e^{-ix\xi}\dd x. \] 
	\begin{lemma}[Change of norm] Under the same hypotheses as in Proposition \ref{prop:rescut}, we have  
		\[\|\calF_x f_\delta\|_\Lip\leq |K|\|f\|_\infty,\ \|\partial_\xi\calF_x f_{\delta}\|_\Lip\leq |K|^2\|f\|_\infty.\]
	\end{lemma}
	\begin{proof}
		Noting that in the integration $|x|\leq |y|$, we get the second inequality by the same computation.
	\end{proof}
	The projection of the support of $\calF_xf_\delta$ onto $\R_\xi$ is contained in $[-\delta^{-2},\delta^{-2}]$. Therefore by Proposition \ref{prop:residue}, we have
	\begin{align*}
	\Res( f_\delta)(t)=\frac{1}{\sigma}\int_{-t}^\infty\int_{\R^+}  f_\delta( y,x) \dd\lambda(y)\dd x+\frac{O_\delta}{t}\left(\|f\|_\infty(|K|+|K|^2)\right).
	\end{align*}
	Then
	\begin{align*}
	\int_{-t}^{\infty} f_{\delta} (y,x)\dd x&=\int_{-t}^{\infty}\int_{-y}^{0} f(y,x_1)\psi_\delta(x-x_1)\dd x_1\dd x\\
	&=\int_{-y}^{0} f(y,x_1)\int_{-t}^{\infty}\psi_{\delta}(x-x_1)\dd x\dd x_1\\
	&=\int_{-y}^{0} f (y,x_1)\dd x_1-\int_{-y}^{0} f (y,x_1)\int_{-\infty}^{-t-x_1}\psi_{\delta}(x)\dd x\dd x_1.
	\end{align*}
	Since $t-\delta\geq |K|$, we have $-t-x_1\leq -t+y\leq -\delta$. By $\int_{-\infty}^{-\delta}\psi_\delta\leq C_1\delta$, this implies that $\int_{-t}^{\infty} f_{\delta} (y,x)\dd x=\int_{-y}^{0} f_{\delta} (y,x)\dd x(1+O(\delta))$.
	Using Lemma \ref{lem:resfin}, we have
	\[\Big|\int_{\R^+}\int_{-y}^{0} f(y,x)\dd x\dd\lambda(y) \Big|\leq \| f\|_\infty \Cut(\one)=O(\| f\|_\infty). \]
	Therefore
	\begin{align*}
	&\int_{-t}^\infty\int_{\R^+} f_\delta( y,x) \dd\lambda(y)\dd x=\int_{\R^+}\int_{-y}^{0} f( y,x)\dd x\dd\lambda(y) +O(\delta\|f\|_\infty).
	\end{align*}
	The proof of Lemma \ref{lem:rescut} is complete.
\end{proof}
Next, we will need a lemma to estimate $|f_\delta-f_o|$.

\begin{lemma}\label{lem:chadif} 
	Let $\varphi$ be a $C^1$ function with $\|\varphi'\|_{\infty}< \infty$ and $\|\varphi\|_{\infty}\leq 1$. Let $\varphi_o(u)=\one_{[a,b]}(u)\varphi(u)$ where $b>a$. Then we have
	\begin{equation}\label{ineq:chadif}
	|\psi_{\delta}*\varphi_o(u)-\varphi_o(u)|\leq
	\begin{cases}
	(\|\varphi'\|_\infty+2C_1)\delta, & u\in[a+\delta,b-\delta],\\
	2, & u\in[a-\delta,a+\delta]\cup[b-\delta,b+\delta],\\
	\psi_{\delta}*\one_{[a,b]}(u), & u\in [a-\delta,b+\delta]^c.
	\end{cases}
	\end{equation}
	If $b-a\leq 2\delta$, then $[a+\delta,b-\delta]$ is empty and we don't have the first one. 
\end{lemma}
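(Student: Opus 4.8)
**The plan for Lemma 2.24 (estimating $|\psi_\delta * \varphi_o - \varphi_o|$).**

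The statement is a pointwise bound on $|\psi_\delta * \varphi_o(u) - \varphi_o(u)|$ in three regimes, so I would simply split the real line into the three stated zones and handle each directly, using the normalization $\int \psi_\delta = 1$, the concentration estimate $\int_{-\delta}^{\delta}\psi_\delta \ge 1 - C_1\delta$ (equivalently $\int_{|x|>\delta}\psi_\delta \le C_1\delta$), and $\psi_\delta \ge 0$ together with $\|\varphi\|_\infty \le 1$.

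\medskip

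\textbf{Interior zone $u \in [a+\delta, b-\delta]$.} Here the whole mass-$(1-C_1\delta)$ core of $\psi_\delta$ sits inside $[a,b]$, so I would write
\[
\psi_\delta * \varphi_o(u) - \varphi_o(u) = \int \psi_\delta(x)\big(\varphi_o(u-x) - \varphi(u)\big)\,dx
= \int_{|x|\le\delta}\psi_\delta(x)\big(\varphi(u-x)-\varphi(u)\big)\,dx - \int_{|x|>\delta}\psi_\delta(x)\varphi_o(u-x)\,dx,
\]
using $\varphi_o(u) = \varphi(u)$ on this zone and $\varphi_o(u-x) = \varphi(u-x)$ when $|x|\le\delta$ since then $u-x\in[a,b]$. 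The first integral is bounded by $\|\varphi'\|_\infty \int_{|x|\le\delta}|x|\psi_\delta(x)\,dx \le \|\varphi'\|_\infty\delta$ by the mean value theorem and $\int\psi_\delta = 1$ with $\psi_\delta$ supported-in-spirit near $0$ (more precisely $\int|x|\psi_\delta \le \delta\int_{|x|\le\delta}\psi_\delta + \int_{|x|>\delta}|x|\psi_\delta$; the tail is controlled since $\psi$ decays faster than any polynomial, contributing $O(\delta)$). The second integral is bounded by $\|\varphi\|_\infty \int_{|x|>\delta}\psi_\delta(x)\,dx \le C_1\delta$. Combining and using $\int_{|x|\le\delta}\psi_\delta\le 1$ gives the bound $(\|\varphi'\|_\infty + 2C_1)\delta$, with a little room in the constant to absorb the polynomial-tail contribution to $\int|x|\psi_\delta$ into the $2C_1$.

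\medskip

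\textbf{Boundary zones $u\in[a-\delta,a+\delta]\cup[b-\delta,b+\delta]$.} Here I would use only the trivial bound: $|\psi_\delta*\varphi_o(u)| \le \|\varphi_o\|_\infty \int\psi_\delta = \|\varphi_o\|_\infty \le 1$ and $|\varphi_o(u)|\le 1$, so the difference is at most $2$.

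\medskip

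\textbf{Exterior zone $u\in[a-\delta,b+\delta]^c$.} Then $\varphi_o(u) = 0$, and $|\psi_\delta*\varphi_o(u)| = \big|\int_a^b \psi_\delta(u-v)\varphi(v)\,dv\big| \le \int_a^b \psi_\delta(u-v)\,dv = \psi_\delta * \one_{[a,b]}(u)$, again using $\|\varphi\|_\infty\le 1$ and $\psi_\delta\ge 0$. The final remark, that the interior case is vacuous when $b - a \le 2\delta$ because $[a+\delta,b-\delta]=\emptyset$, is immediate.

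\medskip

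\textbf{Main obstacle.} There is no serious obstacle; the only point needing slight care is making the $\int|x|\psi_\delta(x)\,dx \le (1+C_\psi')\delta$-type estimate rigorous given that $\psi$ (equivalently $\psi_\delta$) is not compactly supported — one must invoke the rapid decay of $\psi$ so that the tail $\int_{|x|>\delta}|x|\psi_\delta(x)\,dx$ is $O(\delta)$, and then fold that constant into the stated $2C_1$. Everything else is a direct application of $\int\psi_\delta = 1$, $\psi_\delta\ge0$, $\int_{|x|>\delta}\psi_\delta\le C_1\delta$, and $\|\varphi\|_\infty\le1$.
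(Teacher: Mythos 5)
Your argument is correct and follows essentially the same route as the paper: split into the three zones, use $\int_{|x|>\delta}\psi_\delta \le C_1\delta$ together with $\|\varphi\|_\infty\le 1$ on the tail, apply the mean value theorem on the core $|x|\le\delta$ where $u-x\in[a,b]$, and use the trivial bound $2$ and the nonnegativity of $\psi_\delta$ in the other two zones. Two small bookkeeping points: the displayed identity in your interior zone drops the term $-\varphi(u)\int_{|x|>\delta}\psi_\delta(x)\,dx$, which is itself at most $C_1\delta$, so the total tail contribution is $2C_1\delta$ exactly as in the stated constant; and the worry in your last paragraph about $\int |x|\psi_\delta(x)\,dx$ is unnecessary, since the mean value theorem is only applied on the region $|x|\le\delta$, where $|x|\le\delta$ holds trivially and no rapid-decay estimate is needed.
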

\begin{proof} We will prove this inequality in each interval.
	\begin{itemize}
		\item When $u$ is in $[a+\delta,b-\delta]$, we have
		\begin{align*}
		|(\psi_{\delta}*\varphi_o-\varphi_o)(u)|&=\left|\int\psi_{\delta}(t)(\varphi_o(u-t)-\varphi_o(u))\dd t\right|\\
		&\leq\int_{-\delta}^{\delta}\psi_{\delta}(t)|\varphi_o(u-t)-\varphi_o(u)|\dd t+2C_1\delta.
		\end{align*}
		When $|t|\leq \delta$, we have $u-t\in[a,b]$. Since $|\varphi_o'(u)|\leq\|\varphi'\|_\infty$ for $u\in[a,b]$, this implies that
		\[\int_{-\delta}^{\delta}\psi_{\delta}(t)|\varphi_o(u-t)-\varphi_o(u)|\dd t
		\leq\int_{-\delta}^{\delta}\psi_{\delta}(t)|t|\|\varphi'\|_\infty\dd t\leq \delta\|\varphi'\|_\infty. \]
		\item When $u\in[a-\delta,a+\delta]\cup[b-\delta,b+\delta]$, we use the trivial bound $|\psi_{\delta}*\varphi_o(u)-\varphi_o(u)|\leq 2$.
		\item When $u\in(-\infty,a-\delta]\cup[b+\delta,\infty)$, we have $\varphi_o(u)=0$, then $|\psi_{\delta}*\varphi_o|\leq|\psi_{\delta}*\one_{[a,b]}|$.
	\end{itemize}
	Thus collecting all together, we get the inequality \eqref{ineq:chadif}.
\end{proof}
\begin{proof}[Proof of Proposition \ref{prop:rescut}]
	To simplify the notation, we normalize $ f$ in such a way that $\| f\|_\infty=1$. By Lemma \ref{lem:rescut}, we only need to give an estimate of $\Res(| f_\delta- f_o|)(t)$. 
	
	Due to $ f_o(y,x)=\one_{-y\leq x<0}(x) f(y,x)$, by Lemma \ref{lem:chadif}: (For clarity, we omit the variable $y$ in the following computation)
	\begin{equation*}
	| f_\delta- f_o|(x)\leq
	\begin{cases}
	(\|\partial_x f\|_\infty+2C_1)\delta, & x\in[-y+\delta,-\delta],\\
	2, & x\in[-y-\delta,-y+\delta]\cup[-\delta,\delta],\\
	\psi_\delta*\one_{[-y,0]}(x), & x\in[-y-\delta,\delta]^c.
	\end{cases}	
	\end{equation*}
	By the definition of $|K|$, the first term is less than $(|\partial_x f|_\infty+2C_1)\delta\one_{[-|K|+\delta,-\delta]}$. The third term equals to
	\begin{align*}
	\one_{[-\infty,-y-\delta]\cup[\delta,\infty]}\psi_\delta*\one_{[-y,0]}(x)&=\one_{[-\infty,-y-\delta]\cup[\delta,\infty]}(x)\int_{-y}^0\psi_\delta(x-x_1)\dd x_1\\
	&=\one_{[-\infty,-y-\delta]\cup[\delta,\infty]}(x)\int_{x}^{x+y} \psi_\delta(x_1)\dd x_1.
	\end{align*}
	
	By the definition and the above arguments, we have
	\begin{align*}
	\Res(| f_\delta- f_o|)(t)&=\sum_{n\geq 0}\int | f_\delta- f_o|(y, x-t)\dd\lambda^{*n}(x)\dd\lambda(y)\\
	& \leq\sum_{n\geq 0}\int\Big((\|\partial_x f\|_\infty+2C_1)\delta\,\one_{[-|K|,-\delta]}( x-t) +2\,\one_{[- y-\delta,- y+\delta]\cup[-\delta,\delta]}( x-t)\\
	&\qquad\qquad\quad +\one_{[-\infty,- y-\delta]\cup[\delta,\infty]}( x-t)\int_{ x-t}^{x+y-t}\psi_\delta(x_1)\dd x_1\Big) \dd\lambda^{*n}(x)\dd\lambda(y).
	\end{align*}
	By Lemma \ref{lem:renintts}, the first term is controlled by $(\|\partial_x f\|_\infty+2C_1)\delta |K|$. For the second term,
	we have
	\[\int \one_{[- y-\delta,- y+\delta]}( x-t)\dd\lambda^{*n}(x)\dd\lambda(y)=\int \one_{[-\delta,\delta]}(x-t)\dd\lambda^{*(n+1)}(x). \]
	 So the second term is less than $4R(\one_{[-\delta,\delta]})(t)$. Due to Proposition \ref{prop:renint}, it is controlled by $C_\psi\delta(1/\sigma+O_\delta(1+2\delta)/t)$. 
	
	For the third term, we need to change the order of integration. Since $ x-t>\delta$ or $ x-t<- y-\delta$, we have $x_1\geq x-t>\delta$ or $x_1\leq x+y-t\leq -\delta$. We first integrate with respect to $x_1$, and so the third term is less than
	\begin{align*}
	&\int_{[-\infty,-\delta]\cup[\delta,\infty]}\psi_\delta(x_1)\sum_{n\geq 0}\lambda\otimes\lambda^{*n}\{(y,x)|x+y\geq x_1+t\geq x \}\dd x_1\\
	&=\int_{[-\infty,-\delta]\cup[\delta,\infty]}\psi_\delta(x_1)E_C(\one)(x_1+t)\dd x_1.
	\end{align*}
	By Lemma \ref{lem:resfin}, the above quantity is less than $C_2\int_{[-\infty,-\delta]\cup[\delta,\infty]}\psi_\delta(x_1)\dd x_1\leq  C_1C_2\delta$.
	
	Therefore, we have
	\begin{align*}
	\Res(| f_\delta- f_o|)(t)=O(\delta|K|+O_\delta/t)\| f\|_\lf,
	\end{align*}
	which completes the proof of Proposition \ref{prop:rescut}.
\end{proof}
\subsection{Proof of the Renewal theorem for stopping time}

\label{sec:proofrenewal}

Let us finally complete the proofs of Proposition \ref{prop:renewal} and Proposition \ref{prop:renewalquantitative} using Proposition \ref{prop:rescut}. Here we add the assumption that $\lambda$ is finitely supported.
\begin{proof}[Proof of Proposition \ref{prop:renewal}]
	Let $\rho$ be a smooth cutoff such that $\rho_{[0,|\supp\lambda|]}=1$ and becomes $0$ outside of $[-1,|\supp\lambda|+1]$.
	Take $f(y,x)=g(y+x)\rho(y)\rho(x+y)$. Then $f(y,x)=g(y+x)$ when $y$ and $x+y$ are in the interval $[0,|\supp\lambda|]$. By definition and $\supp\lambda\subset \R^+$, we have
	\[\E(g(S_{n_t}-t))=E_C f(t). \]
	This function $f$ satisfies the conditions in Proposition \ref{prop:rescut}, and $\|f\|_{\lf}\leq 8\|g\|_{C^1}$. The proof is complete by using Proposition \ref{prop:rescut}.
\end{proof}
\begin{proof}[Proof of Proposition \ref{prop:renewalquantitative}]
	We need to use the weakly diophantine condition to give an estimate of the error term in Proposition \ref{prop:rescut}. For the supremum of the absolute value of $u(i\xi)=\frac{1}{1-\cL\lambda(i\xi)}-\frac{1}{\sigma}\frac{1}{i\xi}$ and its derivative
	$$\partial_\xi\Big(\frac{1}{1-\cL\lambda(i\xi)}-\frac{1}{\sigma}\frac{1}{i\xi}\Big)=\frac{-\partial_\xi\cL\lambda(i\xi)}{(1-\cL\lambda(i\xi))^2}+\frac{1}{\sigma}\frac{1}{i\xi^2},$$
	on the interval $[-\delta^{-2},\delta^{-2}]$,
	by the definition $l$-weakly diophantine, we obtain that it is less than $C\delta^{-4l}$. Then by Proposition \ref{prop:rescut}
	\[O_\delta\leq C\delta^{-4l}. \]
	Then take $\delta=t^{-1/(4l+1)}$. The proof is complete.
\end{proof}

\section*{Acknowledgements} 
The first author would like to thank Jean-Fran\c{c}ois Quint for inspiring discussions on the regularity of self-similar measures. 
The second author thanks Pablo Shmerkin and Boris Solomyak for useful discussions. We also thank the anonymous referees for many useful comments and suggestions that greatly improved the presentation of this article. This work was completed while the second author was visiting Institut de Math\'ematiques de Bordeaux, and the authors would like to thank the hospitality of the institution.

\bibliographystyle{plain}

\end{document}